\newcommand{\red}{}
\newcommand{\blue}{}
\definecolor{mygreen}{rgb}{0.1,0.75,0.2}
\providecommand{\bbs}[1]{\left(#1\right)}
 \newtheorem{thm}{Theorem}[section]
 \newtheorem{cor}[thm]{Corollary}
 \newtheorem{lem}[thm]{Lemma}
 \newtheorem{prop}[thm]{Proposition}
 \theoremstyle{definition}
 \theoremstyle{remark}
 \newtheorem{rem}[thm]{Remark}
 \numberwithin{equation}{section}
\DeclareMathOperator{\KL}{KL}
\DeclareMathOperator{\ran}{Ran}
\DeclareMathOperator{\kk}{Ker}
\DeclareMathOperator{\argmin}{argmin}
\newcommand{\lra}{\longrightarrow}
\newcommand{\peq}{\vec{x}^{\text{s}}}
\newcommand{\xss}{x^{\xs}}
\newcommand{\la}{\langle}
\newcommand{\ra}{\rangle}
\newcommand{\pt}{\partial}
\newcommand{\eps}{\varepsilon}
\newcommand{\ud}{\,\mathrm{d}}
\newcommand{\8}{\infty}
\newcommand{\bR}{\mathbb{R}}
\newcommand{\bZ}{\mathbb{Z}}
\newcommand{\bE}{\mathbb{E}}
\newcommand{\bP}{\mathbb{P}}
\newcommand{\vs}{\vec{s}}
\newcommand{\vx}{\vec{x}}
\newcommand{\V}{\scriptscriptstyle{\text{v}}}
\newcommand{\A}{\scriptscriptstyle{\text{A}}}
\newcommand{\B}{\scriptscriptstyle{\text{B}}}
\newcommand{\tot}{\scriptscriptstyle{\text{tot}}}
\newcommand{\C}{\scriptscriptstyle{\text{C}}}
\newcommand{\R}{\scriptscriptstyle{\text{R}}}
\newcommand{\xs}{\scriptscriptstyle{\text{S}}}
\newcommand{\mr}{\scriptscriptstyle{\text{MR}}}
\newcommand{\vxv}{\vec{x}_{\V}}
\newcommand{\vyv}{\vec{y}_{\V}}
\newcommand{\pe}{\vec{x}^{\text{e}}}
\newcommand{\vxr}{\vec{x}^{\R}}
\newcommand{\vpr}{\vec{p}^{\R}}
\newcommand{\vprr}{\vec{p}^{\mr}}
\newcommand{\vp}{\vec{p}}
\newcommand{\xa}{\vec{x}^{\A}}
\newcommand{\xb}{\vec{x}^{\B}}
\newcommand{\xc}{\vec{x}^{\C}}
\newcommand{\va}{\vec{\alpha}}
\newcommand{\vb}{\vec{\beta}}
\newcommand{\vr}{\vec{r}}
\newcommand{\vm}{\vec{m}}
\newcommand{\vR}{\vec{R}}
\newcommand{\vq}{\vec{q}}
\newcommand{\vy}{\vec{y}}
\newcommand{\ac}{\ce{Act}}
\newcommand{\cv}{C^{\scriptscriptstyle{\text{v}}}}
\begin{document}

\title[Chemical reactions from a Hamiltonian viewpoint]{Revisit of   macroscopic dynamics for some non-equilibrium chemical reactions from a Hamiltonian viewpoint}

\author[Y. Gao]{Yuan Gao}
\address{Department of Mathematics, Purdue University, West Lafayette, IN}
\email{gao662@purdue.edu}

\author[J.-G. Liu]{Jian-Guo Liu}
\address{Department of Mathematics and Department of
  Physics, Duke University, Durham, NC}
\email{jliu@math.duke.edu}

\keywords{Conservative-dissipative decomposition, positive entropy production rate, time reversal,
transition path with energy barrier, thermodynamic limit, large deviation principle}

\subjclass[2010]{80A30, 35F21, 70H33, 49N99}

\date{\today}

\maketitle

\begin{abstract}
 Most biochemical reactions in living cells are  open systems interacting with environment through chemostats to exchange both energy and materials. At a mesoscopic scale, the number of each species in those biochemical reactions can be modeled by a random time-changed Poisson processes. To characterize   macroscopic behaviors in the large number limit, the law of large numbers in the path space determines a mean-field limit nonlinear reaction rate equation describing the dynamics of the concentration of species, while the WKB expansion for the chemical master equation yields a Hamilton-Jacobi equation  and the Legendre transform  of the corresponding Hamiltonian  gives the good rate function (action functional) in the large deviation principle.   In this paper, we decompose a general macroscopic reaction rate equation into a conservative part and a dissipative part in terms of the stationary solution to the Hamilton-Jacobi equation.  This stationary solution is used to determine the energy landscape and thermodynamics for general chemical reactions, which particularly maintains a positive entropy production rate at a non-equilibrium steady state. The associated energy dissipation law   at both the mesoscopic and macroscopic  levels is proved together with a passage from the mesoscopic to macroscopic one. A non-convex energy landscape emerges from the convex mesoscopic relative entropy functional in the large number limit, which picks up the non-equilibrium features.   The existence of this stationary solution is ensured by the optimal control representation at an undetermined time horizon for the weak KAM solution to the stationary Hamilton-Jacobi equation. 
  Furthermore, we use a symmetric Hamiltonian to study a   class of non-equilibrium    enzyme reactions, which leads to nonconvex energy landscape due to flux grouping degeneracy and reduces the conservative-dissipative decomposition to an Onsager-type strong gradient flow.  
This symmetric Hamiltonian implies that the transition paths between multiple   steady states (rare events in biochemical reactions) is  a modified time reversed least action path  with associated   path affinities and energy barriers.  We illustrate this idea through a bistable catalysis  reaction and compute the energy barrier for the transition path connecting two   steady states via its energy landscape.   
\end{abstract}

\section{Introduction}

At a mesoscopic scale, chemical or biochemical reactions can be understood from a probabilistic viewpoint. A convenient way to stochastically  describe chemical reactions is via random time-changed Poisson processes; c.f. \cite{Kurtz15}. Based on this, one can observe `statistical properties' of chemical reactions in the thermodynamic limit. For instance, the law of large numbers gives the `mean path' of a chemical reaction while the large deviation principle can give  rough estimates on  the probability of the occurrence in a vicinity of any path, particularly the  transition path (the most probable path) between two stable states; c.f.  \cite{Freidlin_Wentzell_2012}.
   {\blue In this paper, we investigate various macroscopic behaviors for general chemical reactions, including the conservative-dissipative decomposition for macroscopic dynamics, the passage  from mesoscopic to macroscopic   free energy dissipation relations and   symmetric structures brought by Markov chain detailed balance  in some enzyme reactions.  The studies for  non-equilibrium thermodynamics and metastability in biochemical oscillations was pioneered by Prigogine \cite{prigogine1967introduction}. The coexistence of multiple stable steady states breaks the chemical version detailed balance \eqref{DB} or complex balance \eqref{cDB} properties for chemical reactions  and leads to bifurcations and transition paths. However, some enzyme reactions,  most important non-equilibrium reactions in an open system to maintain metabolite concentrations,  can still be characterized by a process with a Markov chain detailed balance \eqref{master_db_n}.   This mesoscopic   reaction process yields the energy landscape $\psi^{ss}$ of the chemical reaction and the corresponding macroscopic Hamiltonian is symmetric w.r.t $\nabla \psi^{ss}$; see \eqref{newS}. This enables us to study   transition paths, energy barriers and gradient flow structures for a class of non-equilibrium dynamics with multiple steady states.   Before we introduce the main results, we first review some backgrounds for the macroscopic limiting ODE from the large number limit of  the mesoscopic stochastic processes and   backgrounds for  the Wentzel–Kramers–Brillouin (WKB) expansion, a corresponding  Hamiltonian $H$ and the good rate function in an associated large deviation principle.}

\subsection*{Background for large number process and its macroscopic limiting  ODE}
Chemical reaction  with $i=1,\cdots,N$ species $X_i$ and $j=1,\cdots,M$ reactions can be kinematically described as
\begin{equation}\label{CRCR}
\text{ reaction }j: \quad \sum_{i} \nu_{ji}^+ X_{i} \quad \ce{<=>[k_j^+][k_j^-]} \quad   \sum_i \nu_{ji}^- X_i,
\end{equation}
where nonnegative integers $\nu_{ji}^\pm \geq 0$ are stoichiometric coefficients and $k_j^\pm\geq 0$ are reaction rates for the $j$-th forward/backward reactions. Denote $\nu_{ji} :=  (\nu_{ji}^- - \nu_{ji}^+)$ as the net change in molecular numbers for species $X_i$ in the $j$-th forward reaction. The $M\times N$ matrix $\nu:= \bbs{\nu_{ji}}, j=1,\cdots,M, \, i=1,\cdots, N$ as the Wegscheider matrix and $\nu^T$ is referred as the stoichiometric matrix \cite{mcquarrie1997physical}. The column vector $\vec{\nu}_j:= \vec{\nu}_j^- - \vec{\nu}_j^+ := \bbs{\nu_{ji}^- - \nu_{ji}^+}_{i=1:N}\in \mathbb{Z}^N$ is  called the reaction vector for  the $j$-th reaction. In this paper,  all vectors  $\vec{X}= \bbs{X_i}_{i=1:N} \in \bR^N$ and    $ \bbs{\varphi_j}_{j=1:M},\, \bbs{k_j}_{j=1:M}\in \bR^M$ are  column vectors.    We remark the   description \eqref{CRCR} includes reactions both in a closed system and an open system. In an open system, $\vec{\nu}_j^+=\vec{0}$ represents birth/death reaction with the corresponding reactant on the right hand side  called chemostats and birth rate $k_j^+$, death rate $k_j^-$; see a bistable example in Section \ref{subsub_sch} for  materials/energy exchange with environment.

 Let  the space of natural numbers $\mathbb{N}$ be the state space of the counting process $X_i(t)$ and let $X_i(t) \in \mathbb{N}$ for $i=1,\cdots,N$ be  the number of each species in those biochemical reactions.
The random time-changed Poisson representation for chemical reactions \eqref{CRCR} is, c.f. \cite{kurtz1980representations, Kurtz15}, 
\begin{equation}\label{CR}
\begin{aligned}
&\vec{X}(t) = \vec{X}(0) + \sum_{j=1}^M \vec{\nu}_{j}\bbs{ \mathbbm{1}_{\{\vec{X}(t_-) + \vec{\nu}_j \geq 0 \}}  Y^+_j \bbs{\tilde{t}_j^+}- \mathbbm{1}_{\{\vec{X}(t_-) - \vec{\nu}_j \geq 0 \}} Y^-_j \bbs{\tilde{t}_j^-}},\qquad \tilde{t}^\pm_j(t) := \int_0^t \lambda_j^\pm(s)\ud s,
\end{aligned}
\end{equation}
where for the $j$-th reaction channel,   $Y^\pm_{j}(t)$ are i.i.d.   unit rate Poisson processes and $\mathbbm{1}$ is the characteristic function indicating that there is no reaction if the next state $\vec{X}(t_-)\pm\vec{\nu}_j $ is negative for some component. Here and in the following a vector $\vec{x}\geq 0$ is understood as componentwisely nonnegative.  The existence and uniqueness to the stochastic equation \eqref{CR} was proved by \cite{kurtz1980representations, Kurtz15} in terms of the corresponding martingale problem.
In \eqref{CR},  the intensity function $\lambda_j^\pm(s)=\varphi_j^\pm(\vec{X}(s))$ for the time clock $\tilde{t}^\pm_j(t)$ is usually chosen as the mesoscopic law of mass action (LMA)
 \begin{equation}\label{newR}
  {\varphi}_j^\pm(\vec{X}) = k_j^\pm V  \prod_{\ell=1}^{N} \frac{X_\ell !}{V^{\nu_{j\ell}^\pm} \bbs{X_\ell - \nu_{j\ell}^\pm}!}. 
\end{equation}
 Here $V\gg 1$ is the volume for species in the chemical reaction in a container. Because we assume chemical reactions in a container is independent of molecule position and the molecular number is proportional to the container volume, we call the limit for the large number of molecules as thermodynamic limit or macroscopic limit.   Rescale the process \eqref{CR} as  $\cv _i:=\frac{X_i}{V}$ and denote the rescaled discrete state variable as  $\vxv:=\frac{\vec{n}}{V},\, \vec{n}\in \mathbb{N}^N$.
Denote the forward and backward rescaled fluxes  as
$
\tilde{\Phi}^\pm_j(\vxv):= \frac{\varphi_j(\vec{n})}{V}.
$
Then the   large number process $\cv $ satisfies
\begin{equation} \label{Csde}
\begin{aligned}
\cv(t) = \cv(0) + \sum_{j=1}^M  \frac{\vec{\nu}_{j} }{V} \Bigg(\mathbbm{1}_{\{\cv(t_-)+\frac{\vec{\nu}_j}{V}  \geq 0\}} Y^+_j  \bbs{V\int_0^t \tilde{\Phi}^+_j(\cv(s))\ud s}\\
-\mathbbm{1}_{\{\cv(t_-)-\frac{\vec{\nu}_j} {V} \geq 0\}}Y^-_j  \bbs{V\int_0^t \tilde{\Phi}^-_j(\cv(s))\ud s}\Bigg).
\end{aligned}
\end{equation}
{\red The `no reaction' constraints $\cv(t^-)\pm \vec{\nu}_{j} h\geq 0$ is to ensure that there is actually no jump if  the number of some species will be negative in the container. This `no reaction' correction to process \eqref{Csde} was also noticed in \cite[eq(28)]{anderson2019constrained}, where a very similar `no reaction' constraint was imposed near the relative boundary of the positive orthant.}

For a chemical reaction modeled by \eqref{CR}, denote the   counting probability of $\cv (t)$ as $p(\vxv,t)=\bE(\mathbbm{1}_{\vxv}(\cv (t))$, where $\mathbbm{1}_{\vxv}$ is the indicator function. Then $p(\vxv,t)$  satisfies the  chemical master equation (CME), c.f. \cite{Kurtz15}
\begin{equation}\label{rp_eq}
\begin{aligned}
 \frac{\ud }{\ud t} p(\vxv, t) = (Q_{\V}^* p)(\vxv,t)=  &V\sum_{j=1, \vxv- \frac{\vec{\nu}_{j}}{V}\geq 0}^M   \bbs{\tilde{\Phi}^+_j(\vxv-\frac{\vec{\nu}_{j}}{V})     p(\vxv-\frac{\vec{\nu}_{j}}{V},t) - \tilde{\Phi}^-_j(\vxv)     p(\vxv,t) } \\
  &+    V\sum_{j=1, \vxv+ \frac{\vec{\nu}_{j}}{V}\geq 0}^M  \bbs{    \tilde{\Phi}^-_j(\vxv+\frac{\vec{\nu}_{j}}{V})     p(\vxv+\frac{\vec{\nu}_{j}}{V},t) - \tilde{\Phi}^-_j(\vxv) p(\vxv,t)}.
 \end{aligned}
\end{equation}
Here $Q^*_{\V}$ is the transpose of the generator $Q_{\V}$ of process $\cv (t)$. {\blue Here we call the constraint  $\vxv\pm \vec{\nu}_{j} h\geq 0$  the `no reaction' boundary condition for CME which  inherits from \eqref{Csde}; see \cite{GL_vis}. In \cite{Kurtz71, Kurtz15}, this `no reaction' restriction was omitted in the process $\cv$, thus to derive the master equation and generator including this `no reaction' constraint, 
we give a pedagogical derivation in Appendix \ref{app:master}.} We refer to \cite{Gauckler14} for the existence and regularity of solutions to CME.

The   mesoscopic jumping process $\cv $ in \eqref{Csde} can be regarded as a large number interacting particle system.  
In the large number limit (thermodynamic limit), this interacting particle system can be approximately described by a mean field equation, i.e., a macroscopic nonlinear chemical reaction-rate equation. If  the law of large numbers in the mean field limit holds, i.e., $p(\vxv, t)\to \delta_{\vx(t)}$ for some $\vx (t)$, then the limit $\vx (t)$ describes the   dynamics of the concentration of $N$ species in the continuous state space $\mathbb{R}_+^N:=\{\vx \in \bR^N; x_i \geq 0\}$ and
  is given by the following reaction rate equation (RRE), also known as chemical kinetic rate equation,
\begin{equation}\label{odex}
\frac{\ud}{\ud t} \vec{x} =  \sum_{j=1}^M \vec{\nu}_j \bbs{\Phi^+_j(\vec{x}) - \Phi^-_j(\vec{x})}.
\end{equation}
  Here the macroscopic fluxes $\Phi_j^\pm$ satisfy the macroscopic LMA
\begin{equation}\label{lma}
\Phi^\pm_j(\vxv)= k^\pm_j  \prod_{\ell=1}^{N} \bbs{x_\ell}^{\nu^\pm_{j\ell}},
\end{equation}
which can be viewed as a large number approximation for the mesoscopic LMA \eqref{newR}. This RRE with LMA were first proposed by \textsc{Guldberg \& Waage} in 1864.   The limiting macroscopic LMA in RRE \eqref{odex} is same as long as the  mesoscopic LMA satisfies  $\frac{\varphi_j(\vec{n})}{V}\approx \Phi^\pm_j(\vxv)= k^\pm_j  \prod_{\ell=1}^{N} \bbs{x_\ell}^{\nu^\pm_{j\ell}}$. 
Indeed, Kurtz \cite{Kurtz71} proved the law of large numbers for the large number process $\cv (t)$; c.f.,  \cite[Theorem 4.1]{Kurtz15}. Suppose $\Phi^\pm_j$ is local Lipschitz.   If $\cv (0)\to \vec{x}(0)$ as $V\to+\8$, then for any $\eps>0, t>0$,
\begin{equation}\label{lln}
\lim_{V\to +\8} \bP\{ \sup_{0\leq s\leq t}|\cv (s)-\vec{x}(s)|\geq \eps \}=0.
\end{equation}
Thus we will also call the large number limiting ODE \eqref{odex} as the macroscopic RRE. This gives a passage from the mesoscopic LMA \eqref{newR} to the macroscopic one \eqref{lma}.   {\blue In Appendix \ref{app_meanfield}, we   give a  pedagogical derivation for this mean field limit result \eqref{lln} to include   `no reaction' boundary condition.} We also refer to recent results in \cite{{maas2020modeling}} which proves the evolutionary $\Gamma$-convergence from   CME to the Liouville equation and thus starting from a deterministic state $\vx_0$, \cite[Theorem 4.7]{{maas2020modeling}} recovers Kurtz's  results on the mean field limit of CME.

If there exists a positive vector $\vec{m}\in\bR^N_+$ (for instance due to the conservation of mass for each reaction $j$ in a closed system) such that 
\begin{equation}\label{mb}
\vec{\nu}_j \cdot  \vec{m}=0, \quad j=1,2, \cdots,M,
\end{equation}
 where $\vec{m}=(m_i)_{i=1:N}$ and $m_i$ represents the  molecular weight for the $i$-th species, then
  the  Wegscheider matrix $\nu$   has a nonzero kernel, i.e., 
$
\dim \bbs{\kk(\nu)} \geq 1
$
and we have a direct decomposition for the species space
\begin{equation}\label{direct}
\bR^N = \ran(\nu^T)\oplus \kk(\nu).
\end{equation}
Denote the stoichiometric space $G:=\ran(\nu^T)$. Given an initial state $\vec{X}_0 \in \vq+G$, $\vq\in\kk(\nu)$, the dynamics of both mesoscopic \eqref{Csde} and macroscopic \eqref{odex} states stay in the same space $G_q:=\vq+G$, called a stoichiometric compatibility class. We will see later the corresponding Hamiltonian/Lagrangian from WKB expansion for \eqref{rp_eq} are strictly convex in $G$ while degenerate in $\kk(\nu)$. Below we discuss the uniqueness of steady states within one stoichiometric compatibility class for the RRE detailed/complex balance case.

Denote a steady state to RRE \eqref{odex} as $\peq$, which satisfies
\begin{equation} 
 \sum_{j=1}^M \vec{\nu}_j \bbs{\Phi^+_j(\peq) - \Phi^-_j(\peq)} =0.
\end{equation}
The detailed balance condition for RRE \eqref{odex} is defined by \textsc{Wegscheider 1901, Lewis 1925} as: 
(i) there exists a  $\peq>0$ (componentwise); and (ii) $\peq$ satisfies
\begin{equation}\label{DB}
\Phi^+_j(\peq) - \Phi^-_j(\peq) = 0, \quad \forall j.
\end{equation}
We call RRE \eqref{odex}  detailed balanced if there exists such a detailed balanced state $\peq$. 
This immediately gives a necessary condition that both $k_{j}^\pm>0$, i.e., the reaction is reversible. 
This concept of detailed balance for RRE \eqref{DB} is commonly used in chemistry and biology, while it is different from the Markov chain detailed balance condition \eqref{master_db_n} for the mesoscopic jump process $C^V$.  {\blue The latter is a more proper mathematical definition for the detailed balance condition and includes some non-equilibrium reactions which can not be characterized via the more constrained chemical version detailed balance \eqref{DB}; see Section \ref{sec_reversal}. We will use both concepts in this paper, so we call \eqref{DB} the detailed balance for RRE and \eqref{master_db_n} the Markov chain detailed balance for CME, respectively.}


Under RRE detailed balance condition \eqref{DB}, all the positive steady solutions to \eqref{odex} are detailed balanced and are characterized by
$ \peq  e^{\vq}>0$ for some $\vq \in \kk(\nu)$. This is also true for a weaker condition called the complex balance condition \eqref{cDB}.  We summarize the well-known result on the uniqueness of the RRE detailed/complex balanced steady state discovered in \cite[Theorem 6A]{Horn72} (\cite[Theorem 3.5]{Kurtz15}) as Lemma \ref{lem:equi} and the deficiency zero theorem proved by \textsc{Horn, Feinberg} \cite{feinberg1972chemical, Horn72} is revisited in Section \ref{subsec_de}.
  Lemma \ref{lem:equi} says that   for  each stoichiometric compatibility class,   there is only one equilibrium steady state for a detailed/complex balanced RRE system.  
 
   On the contrary,  non-equilibrium chemical reaction system has coexistent steady states and nonzero steady fluxes, so how to find transition paths between different non-equilibrium steady states and to compute the corresponding energy barriers for the macroscopic RRE \eqref{odex} are the main goals of this paper. Particularly, in many biochemical reactions, such as heterogeneous catalytic oxidations and some enzyme reactions, the RRE detailed/complex balance conditions do not hold. We will use a symmetric Hamiltonian  \eqref{newS} to study some non-equilibrium reaction dynamics,    which enables us to explore  Onsager's strong form   gradient flow structure (see \eqref{strong_GF})  and to compute the  explicit transition path formula with the associated  path affinity (see Proposition \ref{thm2}).
Before describing our main results, let us further review some important properties for a Hamiltonian raising from WKB expansion of $p(\vxv, t)$ below.

\subsection*{Background for WKB expansion, Hamiltonian and large deviation principle for chemical reactions}
Besides the macroscopic trajectory $\vx(t)$ given by the law of large numbers,     WKB expansion for $p(\vxv, t)$ in CME \eqref{rp_eq} is another standard  method   \cite{Kubo73, Hu87, Dykman_Mori_Ross_Hunt_1994, SWbook95,   QianGe17}, which   builds up a more informative bridge between the mesoscopic dynamics and the macroscopic behaviors. We remark the WKB expansion has different names in different fields, such as the eikonal approximation, or the instanton technique, or the nonlinear semigroup, or the Cole-Hopf transformation.

To characterize the exponential asymptotic behavior, we assume there exists a continuous function $\psi(\vx,t)$ such that $p(\vxv,t)$ has a WKB reformulation
\begin{equation}\label{1.3}
 p(\vxv,t) =  e^{-V\psi(\vxv,t)}, \quad p(\vxv,0)=p_0(\vxv).
\end{equation}
The fluctuation on path space, i.e., the large deviation principle, can be computed through WKB expansion, and the good rate function for the large number process in a chemical reaction is   rigorously proved by \textsc{Agazzi} et.al  in  \cite[Theorem 1.6]{Dembo18}; see explanations below.
We know $\psi$ satisfies
\begin{equation}\label{reHJE}
\pt_t \psi(\vxv,t) = -\frac{1}{V} e^{V\psi(\vxv,t)} Q^*_V e^{-V \psi(\vxv,t)}=: - \frac{1}{V} H^*_V (V\psi), \quad \psi(\vxv,0) = -\frac{1}{V} \log p_0(\vxv).
\end{equation}
 By  Taylor's expansion of $\psi(\vx\pm \frac{\vec{\nu}_j}{V},t)$ in \eqref{rp_eq} and \eqref{reHJE}, 
 we obtain the following  Hamilton-Jacobi equation (HJE) for the rescaled master equation \eqref{rp_eq} for $\psi$
 \begin{equation}\label{HJEpsi}
  \pt_t \psi(\vec{x}, t) =   -\sum_{j=1}^M     \bbs{ \Phi^+_j(\vec{x})\bbs{e^{\vec{\nu_j} \cdot \nabla \psi(\vec{x},t)}   -  1} +  \Phi^-_j(\vec{x})\bbs{e^{-\vec{\nu_j} \cdot \nabla \psi(\vec{x},t)}   - 1 }};
 \end{equation}
 see also derivations for \eqref{tempHJE} later.
Define
 Hamiltonian $H(\vp,\vx)$ on $\bR^N\times \bR^N$ as
\begin{equation}\label{H}
H(\vec{p},\vec{x}):= \sum_{j=1}^M     \bbs{ \Phi^+_j(\vec{x})e^{\vec{\nu_j} \cdot \vec{p}}   -  \Phi^+_j(\vec{x}) +  \Phi^-_j(\vec{x})e^{-\vec{\nu_j} \cdot\vec{p}}   -  \Phi^-_j(\vec{x}) } .
\end{equation}
Then the HJE for $\psi(\vx,t)$ can be recast as 
\begin{equation}\label{HJE2psi}
\pt_t \psi + H(\nabla \psi, \vx) =0.
\end{equation}
The WKB analysis above defines a Hamiltonian  $H(\vp,\vx)$, which contains almost all the information for the macroscopic dynamics.
  We remark this kind of WKB expansion was first used by \textsc{Kubo} et.al. \cite{Kubo73} for  master equations for general Markov processes and later was applied to CME by  \textsc{Hu} in \cite{Hu87}. In \cite{Dykman_Mori_Ross_Hunt_1994}, \textsc{Dykman} et.al. first derived the HJE \eqref{HJE2psi} with the associated Hamiltonian $H$ in \eqref{H} and reviewed the symmetry of Hamiltonian $H$ in the RRE detailed balance case $H(\vp, \vx) = H(\log \frac{\vx}{\peq} - \vp, \vx).$

Equivalent to WKB reformulation for CME, one can define    Varadhan's nonlinear semigroup \cite{Varadhan_1966, feng2006large} for process $\cv (t)$ via the WKB reformulation for the backward equation
 \begin{equation}\label{semigroup}
u(\vxv,t)= \frac{1}{V} \log \bE^{\vxv}\bbs{e^{V u_0(\cv _t)}} =:\bbs{S_t u_0}(\vxv)
\end{equation}
and as $V\to+\8$, with the same Hamiltonian $H$, $u(\vx,t)$ satisfies
\begin{equation}\label{HJEu}
\pt_t u - H(\nabla u, \vx) =0.
\end{equation}
{\blue Comparing with \eqref{HJE2psi}, the limiting HJE after WKB expansion for forward and backward equation only has a sign difference in the time derivative.
The rigorous convergence from the Varadhan's nonlinear semigroup \eqref{semigroup} to the viscosity solution of   HJE \eqref{HJEu} was proved in \cite{GL_vis} by reformulating \eqref{semigroup} as a monotone scheme to HJE \eqref{HJEu}. Two difficulties  brought by the `no reaction' boundary condition and the polynomial growth rate for the coefficients $\Phi(\vx)^\pm_j$ in Hamiltonian when constructing unique viscosity solution were overcame in \cite{GL_vis} by constructing upper/semicontinuous envelopes which inherit the `no reaction' constraint and by constructing barriers to control far field values.   Based on this convergence and the Lax-Oleinik's representation for the viscosity solution to HJE \eqref{HJEu} 
\begin{equation}\label{LO}
u(\vx,t) = \sup_{\vy\in \mathbb{R}^N} \bbs{u_0(\vy) - I_{\vx,t}(\vy)}, \quad I_{\vx,t}(\vy) = \inf_{\gamma(0)=\vx,\gamma(t)=\vy} \int_0^t L(\dot{\gamma}(s),\gamma(s)) \ud s,
\end{equation}
\cite{GL_vis} verified the Varadhan's inverse lemma for the large deviation principle \cite{Bryc_1990}. Here $L$ is the convex conjugate of $H$ and $I_{\vx,t}(\vy)$ is the least action between fixed initial point $\vx$ and ending point $\vy$ at time $t$.  The Lax-Oleinik's representation for $u(\vx,t)$ can be interpreted as  a deterministic optimal control problem with terminal profit $u_0$ at $t$ and running cost given by the least action $I_{\vx,t}$. Combining this convergence  with the exponential tightness of $\cv(t)$ at single times,    the large deviation principle for the random variable $\cv (t)$ at any time $t$   with good rate function $I_{\vx_0,t}(\cdot)$   was then proved.  The sample path large deviation principle, which requires further the exponential tightness in the path space, is more involved and we refer to \textsc{Agazzi} et.al   \cite[Theorem 1.6]{Dembo18}.}
The relation between the Hamiltonian $H$ and the rate function in the large deviation principle for a general Markov process was introduced by \textsc{Fleming and Sheu} \cite{fleming1983optimal}; see also \cite{feng2006large}. 
In  Section \ref{subsec_wkb} and Section \ref{sec4.0}, we   summarize key properties for  $H(\vp,\vx)$ and its convex conjugate $L(\vs,\vx)$ and their relations  to the macroscopic RRE, the HJE for the phase variable $\psi$, and also the good rate function   in the large deviation principle.  
The solution $\vx(t)$ to RRE \eqref{odex} is shown to be  a least action curve with zero action cost $\ac(\vx(\cdot))=0$; see Lemma \ref{lem:least}. Indeed, $\vx(t)$ is a curve following the Hamiltonian dynamics with zero momentum $\vp=0$.

\subsection*{Main results}
  In Section \ref{sec3}, we utilize  the dynamic and stationary solutions to HJE \eqref{HJE2psi} to study the characterization and decomposition of RRE. We first recast the RRE \eqref{odex} as a bi-characteristic of HJE
\begin{equation}
\frac{\ud}{\ud t} \vx = \nabla_p H(\vec{0}, \vx), \quad \vp\equiv \vec{0}.
\end{equation}
This directly gives the characterization of the macroscopic RRE trajectory $\vx(t)$, i.e., 
\begin{equation}
\vec{x}(t) =  \argmin_{\vec{x}} \, \psi(\vec{x},t), \quad \text{ for all } t\in[0,T].
\end{equation}

Second, the stationary solution $\psi^{ss}(\vx)$ to HJE \eqref{HJE2psi} plays the role of a free energy, by which, we decompose the RRE as a conservative part and a dissipation part
\begin{equation}
\begin{aligned}
&\dot{\vx} =W(\vx) \,\, - \,  K(\vx) \nabla \psi^{ss}(\vx),\\
&W(\vx):=\int_0^1 \nabla_p H(\theta \nabla \psi^{ss}(\vx),\vx) \ud \theta, \quad K(\vx):=   \int_0^1 (1-\theta) \nabla^2_{pp} H(\theta \nabla \psi^{ss}(\vx), \vx) \ud \theta.
\end{aligned} 
\end{equation}
Here the conservative part is orthogonal to $\nabla \psi^{ss}$, i.e. $\la W(\vx), \nabla\psi^{ss}(\vx) \ra=0$ 
 and the   dissipation part is expressed using Onsager's nonnegative definite response operator $K(\vx)$ for $\dot{\vx}$ w.r.t generalized force $\nabla \psi^{ss}(\vx)$; see details in Theorem  \ref{thm_decom}. This orthogonal decomposition yields that any increasing function of the energy landscape $\phi(\psi^{ss})$ serves as the Lyapunov function of RRE; see \eqref{Ly_n}.   GENERIC formalism and anti-symmetric structures for RRE with additional mass conservation law are also discussed in Section \ref{sec_3decom}.  
 
Third, in Section \ref{subsec_thermo}, we use  the above conservative-dissipative decomposition for RRE to derive the thermodynamic relations for general   chemical reactions, i.e., we express the total entropy production rate as  the adiabatic  and nonadiabated entropy production rate
\begin{equation}
\begin{aligned}
T \dot{S}_{\tot}&=T \dot{S}_{na} + T \dot{S}_{a} \geq 0,\\
T \dot{S}_{na}&=  k_{\B}T \la K(\vx)   \nabla  \psi^{ss}(\vx),\,  \nabla \psi^{ss}(\vx) \ra\geq 0,
\\
T \dot{S}_{a}&=k_{\B}T \sum_j \bbs{ \KL(\Phi^+_j (\vec{x}(t))||\Phi^-_j (\vec{x}(t))e^{-\vec{\nu}_j \cdot \nabla \psi^{ss}}) + \KL(\Phi^-_j (\vec{x}(t))||\Phi^+_j (\vec{x}(t))e^{\vec{\nu}_j \cdot \nabla \psi^{ss}})} \geq 0,
\end{aligned}
\end{equation}
where $\KL(\vec{x}|| \vx^*) := \sum_i \bbs{  x_i \ln \frac{x_i}{x^{*}_i} - x_i + x^*_i}$ is the relative entropy; see Proposition \ref{prop_Stot}.
Particularly, as $t\to+\8$ and $\vx(t)$ goes to a non-equilibrium steady state (NESS) $\peq$, remaining a strictly positive entropy production rate is an important feature of a non-equilibrium chemical reaction \cite{kondepudi2014modern}
\begin{equation}
T \dot{S}_{a} \to k_{\B}T \sum_j \bbs{\Phi^+_j(\peq) - \Phi^-_j(\peq)}\log\frac{\Phi^+_j(\peq)}{\Phi^-_j(\peq)}>0.
\end{equation}

Fourth,  for general non-equilibrium RRE and the corresponding CME \eqref{rp_eq},  we also derive a  $\phi$-divergence energy dissipation law  based on the $Q_V$-matrix structure and a Bregman's divergence in Proposition \ref{prop_meso_limit}. Particularly, if there exists a positive invariant measure $\pi_{\V}$ for mesoscopic CME,  taking $\phi(p)=p\log \frac{p}{\pi_V}$, in the large number limit, the corresponding mesoscopic energy dissipation relation converges to the macroscopic energy dissipation relation \eqref{Ly} in terms of the energy landscape $\psi^{ss}(\vx)$.
We emphasis that the mesoscopic 
 energy functional  $F(p)=\sum_{\vxv}  \phi \bbs{\frac{p(\vxv)}{\pi(\vxv)}} \pi(\vxv)$ is always convex w.r.t. $p$. However, since $\phi(u)$ is convex, the nonlinear weight $\pi_{\V}(\vxv)$ in $F(p)$ drastically pick up the complicated non-convex energy landscape for chemical reactions from $\pi_{\V}(\vxv)\approx e^{-V \psi^{ss}(\vx)}$ in the large number limit. After the concentration of the measure in the large number limit, a non-convex energy landscape $\psi^{ss}$ emerges. Notice  there is no such a transition from convex functional to a nonconvex function under the RRE detailed balance assumption because the corresponding probability flux is only monomial. {\blue We point out the above emerged polynomial grouped probability flux \eqref{group_flux} and the non-convex energy landscape  are only linked to non-equilibrium in the specific context of chemical reactions. For  general equilibrium models  in statistical physics, non-convex energy landscape is common, for instance the Lagenvin dynamics with non-convex potential and Ising model of ferromagnetism.} 

{\blue In terms of the mesoscopic chemical reaction jumping process, the proper mathematical definition for detailed balance is  there exists a positive  invariant measure $\pi_{\V}$ satisfying the Markov chain detailed balance \eqref{master_db_n}, which naturally includes the above grouped probability fluxed \eqref{group_flux}.  In the large number limit $V\to +\8$, this Markov chain detailed balance gives raise to a  symmetry for the macroscopic Hamiltonian 
\begin{equation} \label{newS}
H(\vp, \vx) = H(\nabla\psi^{ss}(\vx)-\vp, \vx), \quad \forall \vx, \vp;
\end{equation}
see Proposition \ref{prop_mcdb}.
  Taking $\vp=\vec{0}$, we know $\psi^{ss}(\vx)$ is the stationary solution to HJE \eqref{HJE2psi}. Applying \eqref{1.3} and \eqref{HJE2psi}, we formally have $\psi^{ss}(\vx)=-\lim_{V\to+\8} \frac{\log \pi(\vxv)}{V}$. Rigorously, an upper semicontinuous viscosity solution $\psi^{ss}$  to the stationary HJE was constructed from $\pi(\vxv)$ in \cite{GL_vis} in the Barron-Jensen’s sense \cite{Barron_Jensen_1990}. }
  
   The first consequence of this symmetric Hamiltonian \eqref{newS} is the RRE becomes an Onsager's type strong gradient flow in terms of $\psi^{ss}(\vx)$. That is to say the conservative part $W(\vx)$ vanishes in the RRE decomposition \eqref{odeDD}; see Proposition \ref{prop_sgf}. 

The second consequence of this symmetric Hamiltonian is the time reversal symmetry in terms of the Lagrangian upto a null Lagrangian  
 \begin{equation}\label{LLL}
L(\vs,\vx) - L(-\vs, \vx) =   \vs \cdot \nabla \psi^{ss}(\vx), \quad \forall \vx, \vs.
\end{equation}
{\blue This symmetric relation was first dated back to \textsc{Morpurgo} et.al. in \cite{morpurgo1954time}   for Hamiltonian dynamics in classical mechanics with a   quadratic Hamiltonian. The quadratic form Hamiltonian  $H(\vp,\vx)=\vp\cdot (\vp-\nabla U)$ from the WKB expansion of the Langevin dynamics is also symmetric w.r.t $\vp=\frac12\nabla U$, so the classical Freidlin-Wentzell theory \cite{Freidlin_Wentzell_2012} shows  the most probable path  connecting two steady states $\xa,\xb$ of $U$ (assumed to be double well with Morse index $1$) is piesewisely given by an 'uphill' least action curve which starts from $\xa$, passes through a saddle point $\xc$  and then matches with a 'downhill' least action curve from $\xc$ to $\xb$.  The 'uphill'  least action curve with nonzero action is exactly the time reversal of the zero-cost least action curve from $\xc$ to $\xa$. This symmetric relation \eqref{LLL} was   systematically studied in \cite{Mielke_Renger_Peletier_2014}, which established the relation between generalized gradient flow and the large deviation principle. The symmetry in the  Hamiltonian \eqref{newS}  was also  used    in \cite{bertini2002macroscopic} for the macroscopic fluctuation theory; see recent developments in \cite{Kraaij_Lazarescu_Maes_Peletier_2020}  for the fluctuation symmetry and the associated GENERIC formalism.}
 In Proposition \ref{thm2}, with the symmetric Hamiltonian condition \eqref{newS}, (i) the 'uphill' least action curve connecting    a stable steady state and a saddle point is still the time reversal $\vxr(t)$ of the 'downhill' curve $\vx(t)$ while the corresponding momentum is reversed with an additional control force $\nabla\psi^{ss}$; (ii) the difference of Lagrangians between the forward and reversed curve   is a null Lagrangian \eqref{LLL}, so   the difference of the action cost between the path and the time revered path depends only on the starting/end positions; (iii) the steady solution $\psi^{ss}(\vx)$ to HJE \eqref{HJE2psi} defines the energy landscape for the chemical reaction and the path affinity is given by the difference between the values of $\psi^{ss}$ at the starting/end positions
\begin{equation}
\ac(\vxr(\cdot))  - \ac(\vx(\cdot)) = \psi^{ss}(\vxr_T) - \psi^{ss} (\vxr_0 ).
\end{equation}
 The globally defined energy landscape $\psi^{ss}$ coincides with the quasipotential \cite{Freidlin_Wentzell_2012} upto a constant if the least action curve stays within a stable basin of attraction of a steady state.

The third consequence is we can use the symmetric Hamiltonian to study a class of non-equilibrium enzyme reactions. 
  {\blue Notice the symmetric Hamiltonian condition brought by the mathematical definition of Markov chain detailed balance \eqref{master_db_n} is more general than the constrained chemical version detailed balance condition \eqref{DB}. Although the Markov chain detailed balance is a basic mathematical concept, it includes the grouped probability flux representing a nonzero steady flux in each reaction channel.} The resulting symmetric Hamiltonian can be used to describe a   class of non-equilibrium reactions including enzyme catalyzed reactions.    In Section \ref{subsub_sch}, a simplified Schl\"ogl catalysis reaction was studied in detail, where  three   features for non-equilibrium chemical reactions: multiple steady states, nonzero steady state fluxes and positive entropy production rates at non-equilibrium steady states (NESS) are shown.

The stationary solution $\psi^{ss}(\vx)$ to HJE serves as the energy landscape of chemical reactions, facilitates the conservative-dissipative decomposition for RRE, and also  determines both the energy barrier and thermodynamics of chemical reactions.  For a detailed/complex balanced RRE, we simply have 
 $\psi^{ss}(\vx)=\KL(\vx||\peq)$; see Lemma \ref{lem_cDB}. For general chemical reactions, we first discuss  viscosity solutions to stationary HJE  by the dynamic programming method \cite[Theorem 2.41]{Tran21}. This is equivalent to an  optimal control interpretation in an undefined time horizon; see Section \ref{subsec_oc}.   By Maupertuis's principle for an undefined time horizon (see \eqref{Ebb}), we regard
 $\vp$ as a control variable, then in terms of the Hamiltonian,  the most probable path is solved by a constrained  optimal control problem (see \eqref{ocn})
 \begin{equation} 
 \begin{aligned}
 &v(\vy; \, \xa, c)=\inf_{T,\vp} \int_0^T \bbs{ \vp \cdot \nabla_p H(\vp,\vx) - H(\vp,\vx) + c  }\ud t,\\
 & \text{s.t.}  \,\, \dot{\vx} = \nabla_p H(\vp,\vx),\,\, t\in(0,T),\quad \vx_0=\xa,\,\, \vx_T =\vy.
 \end{aligned}
 \end{equation}
 Here and afterwards, we use notation $\nabla_p H$ as  the vector $\{\pt_{p_i}H\}_{i=1:N}.$
 The critical energy level $c$ is zero for the Hamiltonian in chemical reactions.   Let $\xa$ be a   steady state of RRE, then the corresponding critical ma\~n\'e potential $v(\vy; \, \xa, 0)$ gives a viscosity solution to the steady HJE \cite{ishii2020vanishing}. However, to obtain a unique viscosity solution given by the energy landscape $\psi^{ss}$ in a chemical reaction,  some notation of selection principle in weak KAM solutions needs to be imposed \cite{Gao_2022}.

 In general,  a standard diffusion approximation can be obtained   via the Kramers-Moyal expansion for the CME, which is equivalent to the quadratic approximations near $\vp=0$
 for the Hamiltonian; see Section \ref{subsec_diffusion}. However,
 this diffusion approximation only valid for a transition   near the   'downhill' solution to the RRE. The 'uphill' transition path starting from a stable steady state ending at a saddle point is  a rare transition in the large deviation regime and the energy barrier can not be computed by the above diffusion approximation. We  refer to \cite{Doering_Sargsyan_Sander_2005} for quantified analysis of the failure of the diffusion approximation via the Kramers-Moyal expansion (a.k.a `system size expansion' by \textsc{van Kampen} \cite{van1992stochastic}). Based on the strong gradient formulation \eqref{strong_GF}, another    drift-diffusion approximation \eqref{528} is proposed as a good quadratic approximation near not only the 'downhill' solution to the macroscopic RRE but also the 'uphill' least action curve. This  diffusion approximation shares the same energy landscape and same symmetric Hamiltonian structure w.r.t.  $\nabla\psi^{ss}$ and satisfies a fluctuation-dissipation relation with an invariant measure $\pi=e^{-V\psi^{ss}}$.

\subsection*{State of the art}
The WKB expansion  is a classical and powerful tool to understand the exponential asymptotics that quantifies the fluctuations in  many physical problems; see \textsc{Kubo} et.al \cite{Kubo73} for a WKB expansion of a general stochastic process and see Doi-Peliti formalism \cite{Doi_1976, Peliti_1985}.  WKB analysis also initials   physical studies of the  large deviation(fluctuation) behaviors for stochastic models of chemical reactions  from a Hamiltonian  viewpoint, c.f. \cite{ Hu87, Dykman_Mori_Ross_Hunt_1994, SWbook95, Spohn99, Assaf_Meerson_2017}.
Particularly,  in the large number limit of chemical reactions modeled by the CME, \textsc{Dykman} et.al. \cite{Dykman_Mori_Ross_Hunt_1994} first derived   HJE \eqref{HJE2psi} with the associated Hamiltonian $H$ and studied the symmetry of the Hamiltonian   in a  detailed balanced chemical reaction system. We also refer to a recent review article \cite{Assaf_Meerson_2017} using WKB approximations to study  various large deviation behaviors such as population extinction/fixation, genetic  switches and biological invasions.

 The concept and the exponential asymptotics for    the reaction rate in terms of the activation energy (energy barrier) for transitions between two states in a chemical reaction was pioneered by Arrhenius 1889 while the celebrated work by Kramers explicitly estimated it for a Langevin dynamics. 
At the mathematical analysis level, 
the large deviation principle with the associated Lagrangian/Hamiltonian  for general  stochastic processes and     the transition path  (the most probable path) connecting two stable states were   pioneered by \textsc{Freidlin and Wentzell} in late 60s, c.f. \cite{Freidlin_Wentzell_2012}.  The central idea of the Freidlin-Wentzell theory is that the steady solution $\psi^{ss}(\vs)$ to the HJE defines a quasipotential which quantifies   the maximum probability or the energy barrier for a transition, i.e., an exit problem in the basin of attraction.  We also refer to \cite{fleming1983optimal, fleming06, feng2006large, Kraaij_2016, Kraaij_2020} for the optimal control and nonlinear semigroup viewpoint, which connect   least action problems with   HJEs.
For chemical reactions with RRE detailed balance \eqref{DB}, the quasipotential is given by $\psi^{ss}(\vx)=\KL(\vx||\peq)$ \cite{Dykman_Mori_Ross_Hunt_1994}, while for general large number process including   non-equilibrium   dynamics, \cite{Dembo18} proved the large deviation principle for $\cv$ with the associated good rate function. In \cite{GL_vis}, the large deviation principle at single times was proved via the convergence from the Varadhan's nonlinear semigroup to the  Lax-Oleinik representation of the viscosity solution to HJE. Moreover, an upper semicontinuous viscosity solution in the Barron-Jensen’s sense \cite{Barron_Jensen_1990} to the stationary HJE was also obtained in \cite{GL_vis} by using a positive detailed balanced invariant measure to $\cv$.

With the  RRE detailed/complex balance condition,   characterization and uniqueness of all  steady states for the macroscopic RRE was proved in \cite{Horn72, feinberg1972chemical}. On the contrary,  thermodynamic relations,  dissipation structures and  computations for transition paths in non-equilibrium stochastic  dynamics are challenging problems due to coexistent steady states and nonzero steady fluxes   sustained by environment, whose studies  were pioneered by  \textsc{Prigogine} \cite{prigogine1967introduction} from the Brussels School.   We refer to \cite{Ruelle_2003, qian2006open, kondepudi2014modern, Rao_Esposito_2016, QianGe17, qian_book}  and the references therein for thermodynamics relations, particularly the adiabatic/nonadiabatic decomposition for the nonzero entropy production rate  in non-equilibrium biochemical reactions.
  In  \cite{Lazarescu_Cossetto_Falasco_Esposito_2019}, \textsc{Lazarescu} et.al. used  a biased Hamiltonian $H$ for chemical reaction based on time-averaged observations  to study the first order phase transitions,  particularly for metastable models in an open system with non-equilibrium steady states.  However, it is not clear whether the biased Hamiltonian provides the most probable path (the least action path for the original Hamiltonian). Using a linear response relation with a susceptibility $\chi(\rho)$ between the current and the external field generating the fluctuation,   a comprehensive review by \textsc{Bertin} et.al \cite{Bertini15} discussed the macroscopic fluctuation theory including  the time reversal, symmetry of Hamiltonians, fluctuation theorems at a macroscopic scale for various physical models.  {\blue The macroscopic fluctuation theory was first developed by \textsc{Bertini}, et.al \cite{bertini2002macroscopic}; see further mathematical analysis and variational structure including the density-flux pair large deviation principle in \cite{Renger_2018, Patterson_Renger_2019, Patterson_Renger_Sharma_2021}.} Without the quadratic approximation of the Hamiltonian, the calculations of transition paths,   and the symmetry for fully nonlinear  Hamiltonians in non-equilibrium reactions were not discussed in \cite{bertini2002macroscopic, Bertini15}. Indeed, there were many studies for the failure in computing the correct energy barrier of transition paths using a simple diffusion approximation from the Kramers-Moyal expansion of the CME; c.f.  \textsc{Doering} et.al  \cite{Doering_Sargsyan_Sander_2005}  for  the extinction problem in a birth-death stochastic population model. 

Under the RRE detailed balance assumption,  the macroscopic RRE has several gradient flow structures in terms of free energy $\KL(\vx||\peq)$ \cite{Onsager,  maas2020modeling}.   Particularly, a De Giorgi type generalized gradient flow structure brought by the symmetry in the Hamiltonian   is closely related to the good rate function in the large deviation principle; see systematical  studies in \cite{Mielke_Renger_Peletier_2014}.  Recently, \cite{Liero, maas2020modeling}  recovered the macroscopic RRE  for chemical reactions  via the evolutionary $\Gamma$-convergence techniques in \cite{Serfaty04, mielke2016evolutionary} in the gradient flow regime.  {\blue The symmetric Hamiltonian was also  used in \textsc{Kraaij} et.al. \cite{Kraaij_Lazarescu_Maes_Peletier_2020}   to study the fluctuation symmetry. This symmetry criteria  reduces a pre-GENERIC system to a GENERIC formalism \cite{Kraaij_Lazarescu_Maes_Peletier_2020}. In general, the energetic decomposition for a dynamics is not unique and has   different gradient flow structures with associated fluctuation estimates; c.f.,  \cite{Peletier_Redig_Vafayi_2014}.}

 The remaining part of this  paper is organized as follows. 
 In Section \ref{sec2}, we provide   preliminaries for the RRE, WKB expansion and properties for the Hamiltonian and the Lagarangian. In Section \ref{sec3}, we study    dynamic solutions,  steady solution $\psi^{ss}(\vx)$ to the HJE. Using the stationary solution, we propose a conservative-dissipative decomposition for general non-equilibrium RRE (see Section \ref{sec_3decom}) and also give a decomposition for its thermodynamics (see Section \ref{subsec_thermo}). The associated energy dissipation laws at both mesoscopic and macroscopic level with the passage from one to another is given in Section \ref{sec_meso2}.   In Section \ref{sec_reversal}, we use a symmetric Hamiltonian to study a  class of non-equilibrium enzyme reactions,  which yields (i) an Onsager-type strong  form of gradient flow and (ii)  a modified  time reversed curve serves as the   transition paths between coexistent steady states. Bistable Schl\"ogl example is discussed in Section \ref{subsub_sch}.   In Section  \ref{sec_control}, we clarify the existence of the stationary solution to HJE via an optimal control representation in an undefined time horizon and give a   diffusion approximation for transition path computations that satisfies the fluctuation-dissipation relation and the same symmetric Hamiltonian.  
 Pedagogical derivations for   CME, the generator and the mean field limit RRE after
  including   `no reaction' boundary condition are given in Appendix.


\section{Preliminaries: macroscopic RRE, WKB expansion and large deviation}\label{sec2}
As a preparation for our main results, in this section, we review some terminologies  for the large number limiting RRE \eqref{odex} and collect some preliminary lemmas for existence, uniqueness, characterization of steady states in a  detailed/complex balanced RRE system. The associated Hamiltonian $H(\vp,\vx)$ and HJE from the WKB expansion are also revisited. Moreover, the convex conjugate $L(\vs,\vx)$ of $H(\vp,\vx)$ gives the rate function in the large deviation principle for the large number process, which allows us to study the transition path for a non-equilibrium system in later sections.  Most of the results in this section was known while we provide brief proofs for completeness.

\subsection{Terminologies for the macroscopic RRE and RRE detailed/complex balance conditions}
Recall the forward and backward fluxes $\Phi_j^\pm$ satisfying LMA \eqref{lma} and RRE \eqref{odex}.
Using the $N\times M$ stoichiometric matrix $\nu^T$ and the reaction rate vector 
\begin{equation}
\vr(\vx) = \bbs{ r_j(\vx)}_{j=1:M} = (\Phi^+_j(\vec{x}) - \Phi^-_j(\vec{x}))_{j=1:M},
\end{equation}
we represent RRE \eqref{odex} in a matrix form
\begin{equation}\label{RRR}
\frac{\ud}{\ud t} \vx = \nu^T \vr =: \vR(\vx),
\end{equation}
where $\vR(\vx)$ is called the production rate.
Denote the range of matrix $\nu^T$ as $\ran(\nu^T)$, i.e., the span of the column vectors $\{\vec{\nu}_j\}$ of $\nu^T$. Then we know the production rate 
\begin{equation}
\dot{\vx}(t) \in \ran (\nu^T)\subset \bR^N.
\end{equation}
Motivated by this, we define the subspace
$
G=\{ \vx \in \bR^N; \, \vx
\in \ran(\nu^T) \}
$ which is known as the stoichiometric space.
Recall \eqref{mb}, i.e.,
$
\nu \vec{m}=\vec{0},
$
 which implies the conservation of total mass, i.e., $\frac{\ud}{\ud t} \bbs{ \vec{m} \cdot \vx } = \vec{m}\cdot \nu^T \vr=0$.
Therefore the  Wegscheider matrix $\nu$ always has a nonzero kernel, i.e., 
$
\dim \bbs{\kk(\nu)} \geq 1.
$

We have the following lemma on existence and uniqueness of dynamic solution to \eqref{odex}.
\begin{lem}\label{lem:ode}
Assume $\nu\in \bR^{M\times N}$ is the Wegscheider matrix satisfying \eqref{mb}. Consider RRE \eqref{odex} with flux $\Phi_j^\pm$ satisfying \eqref{lma}. We have
\begin{enumerate}[(i)]
\item The region $R^N_+:=\{\vx \in \bR^N; x_i \geq 0\}$ is an invariant region;
\item For any initial data $\vx_0\geq 0$, there exists a unique global-in-time bounded solution to \eqref{odex} satisfying
\begin{equation}
\frac{\ud }{\ud t} \vx(t) \cdot \vec{m} =0.
\end{equation}
\end{enumerate}
\end{lem}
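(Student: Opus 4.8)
The plan is to establish the three assertions—local well-posedness, invariance of the positive orthant, and global boundedness—in sequence, letting the conservation law \eqref{mb} be what upgrades a local solution to a global bounded one.

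First I would record that each flux $\Phi_j^\pm$ in \eqref{lma} is a monomial in $\vx$, hence a polynomial, and in particular locally Lipschitz on $\bR^N$. Consequently the right-hand side $\vR(\vx)=\nu^T\vr(\vx)$ of \eqref{RRR}--\eqref{odex} is locally Lipschitz, and Picard--Lindel\"of yields, for every datum $\vx_0$, a unique solution on a maximal interval $[0,T_{\max})$. The conservation identity in (ii) is then immediate: since $\vec{\nu}_j\cdot\vec{m}=0$ for each $j$ by \eqref{mb}, we get $\frac{\ud}{\ud t}(\vx\cdot\vec m)=\sum_j(\vec\nu_j\cdot\vec m)\,r_j(\vx)=0$, so $\vx(t)\cdot\vec m\equiv\vx_0\cdot\vec m$ on $[0,T_{\max})$.

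For the invariance in (i), the key point is a quasipositivity (subtangent) structure of the vector field on the boundary faces of $\bR^N_+$. Fix $i$ and suppose $x_i=0$ with $\vx\geq 0$. I would split $\dot x_i=\sum_j(\nu_{ji}^--\nu_{ji}^+)(\Phi_j^+-\Phi_j^-)$ reaction by reaction and note: whenever $\nu_{ji}^+\geq1$ the factor $x_i^{\nu_{ji}^+}$ forces $\Phi_j^+=0$, whenever $\nu_{ji}^-\geq1$ it forces $\Phi_j^-=0$, and when $\nu_{ji}=0$ the reaction contributes nothing. Running through the four sign cases for $(\nu_{ji}^+,\nu_{ji}^-)$ shows each surviving contribution equals $\nu_{ji}^+\Phi_j^-\geq0$ or $\nu_{ji}^-\Phi_j^+\geq0$, hence $\dot x_i\geq0$ on the face $\{x_i=0\}$. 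Equivalently, one can write $\dot x_i=A_i(\vx)-x_iB_i(\vx)$ with $A_i,B_i$ polynomials having nonnegative coefficients, so that every monomial able to decrease $x_i$ carries a factor of $x_i$ (consumption of species $i$ requires $i$ as a reactant). Invoking the Nagumo subtangent condition—or, in the latter form, a Gr\"onwall estimate $x_i(t)\geq x_i(0)\exp\!\big(-\int_0^t B_i(\vx(s))\,\ud s\big)$ up to any exit time—then keeps $\vx(t)\in\bR^N_+$ throughout $[0,T_{\max})$.

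Finally, global existence and boundedness follow by combining the two previous points. Because $\vec m>0$ and $\vx(t)\geq 0$, the conserved quantity bounds each component, $0\leq x_i(t)\leq(\vx_0\cdot\vec m)/\min_\ell m_\ell$, so the trajectory stays in a fixed compact subset of $\bR^N_+$; a locally Lipschitz field cannot push a solution confined to a compact set to the edge of its existence interval in finite time, so the continuation criterion gives $T_{\max}=+\infty$. I expect the only genuinely delicate step to be the invariance (i): the subtangent argument must cope with data lying exactly on a face $\{x_i=0\}$, where the Gr\"onwall bound yields only $x_i\geq0$ rather than strict positivity, and this is precisely the place where the monomial structure of the LMA fluxes—guaranteeing that any consumption of species $i$ contributes a factor $x_i$—is indispensable. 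Without the conservation law one could moreover lose the a priori bound, since the polynomial vector field can in principle blow up in finite time; it is the strict positivity of $\vec m$ together with nonnegativity that rules this out.
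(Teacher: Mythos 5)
Your proposal is correct and follows essentially the same route as the paper, which verifies (i) by the same case-by-case argument showing $\frac{\ud}{\ud t}x_i\geq 0$ on each face $\{x_i=0\}$ and obtains (ii) from \eqref{mb} together with standard ODE theory. You merely supply the details the paper leaves implicit (Picard--Lindel\"of, the factorization $\dot x_i=A_i(\vx)-x_iB_i(\vx)$ justifying invariance, and the compactness/continuation argument upgrading the conserved quantity $\vx\cdot\vec m$ with $\vec m>0$ to global boundedness), all of which are sound.
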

The statement (i) can be directly verified by proving $\frac{\ud}{\ud t} x_i \geq 0$ at any $x_i=0$ using case by case arguments. The statement (ii) is a consequence of \eqref{mb} and the standard ODE theory.

\subsubsection{Detailed balance  and complex  balance for the macroscopic RRE}
Recall the macroscopic RRE \eqref{odex} and the RRE detailed balance condition \eqref{DB} is equivalent to
\begin{equation}
\log k_j^+ -  \log k_j^- =   \vec{\nu}_{j} \cdot \log \peq
\end{equation}
due to  LMA \eqref{lma}.

Denote the complex space as the collection of  distinct reaction vectors
$
\mathcal{C}:= \bbs{\vec{\nu}_j^\pm}_{j=1:M}.
$
Then the complex balance condition means for each complex $\vec{\eta}\in \mathcal{C}$, all the reactant contributions in the flux equals all the product contributions in flux. Precisely, a strictly positive (componentwisely) state $\peq_c>0$ is called complex balanced \cite{Horn72} if
\begin{equation}\label{cDB}
\sum_{j, \vec{\nu}_j^+ =\vec{\eta}} \bbs{\Phi_j^-(\peq_c) - \Phi_j^+(\peq_c) }+ \sum_{j, \vec{\nu}_j^- =\vec{\eta}} \bbs{ \Phi_j^+(\peq_c)  - \Phi_j^-(\peq_c) }   = 0.
\end{equation}
One can directly verify state $\peq_c>0$ satisfying \eqref{cDB} is a steady state to RRE \eqref{odex}. Indeed, at $\peq_c$, recast the RHS of \eqref{odex} as flux difference
\begin{equation}\label{KF}
\sum_j \vec{\nu}_j  \bbs{\Phi^+_j(\peq_c) - \Phi^-_j(\peq_c)} =  \sum_j  \vec{\nu}^+_j  \bbs{\Phi^-_j(\peq_c) - \Phi^+_j(\peq_c) }   +  \sum_j  \vec{\nu}^-_j  \bbs{\Phi^+_j(\peq_c) - \Phi^-_j(\peq_c) }.
\end{equation}
The first term in the summation represents that for the reactant (aka substrate) complex $\vec{\nu}^+_j$  in the $j$th-forward reaction,  the net flux   is  $\Phi^-_j(\peq_c) - \Phi^+_j(\peq_c)$. So we can re-classify this summation w.r.t distinct reactant complex  $\vec{\nu}^+_j=\vec{\eta}, \vec{\eta}\in \mathcal{C}$
\begin{equation} 
\sum_j \vec{\nu}^+_j  \bbs{\Phi^-_j(\peq_c) - \Phi^+_j(\peq_c)} =  \sum_{ \vec{\eta}\in \mathcal{C}} \vec{\eta}   \sum_{j: \vec{\nu}^+_j=\vec{\eta}}  \bbs{\Phi^-_j(\peq_c) - \Phi^+_j(\peq_c) }.
\end{equation}
 Similarly, the second term in the summation represents that the reactant complex $\vec{\nu}^-_j$ in the $j$th-backward reaction,  the net flux is  $\Phi^+_j(\peq_c) - \Phi^-_j(\peq_c)$. Therefore we can choose to re-classify this summation w.r.t distinct reactant complex  $\vec{\nu}^-_j=\vec{\eta}, \vec{\eta}\in \mathcal{C}$
 \begin{equation} 
\sum_j \vec{\nu}^-_j  \bbs{\Phi^+_j(\peq_c) - \Phi^-_j(\peq_c)} =  \sum_{ \vec{\eta}\in \mathcal{C}} \vec{\eta}   \sum_{ j:  \vec{\nu}^-_j =\vec{\eta}}  \bbs{\Phi^+_j(\peq_c) - \Phi^-_j(\peq_c) }.
\end{equation} 
Combining the above two ways of rearrangements for the 
 summation in chemical channel $j$,  \eqref{KF} becomes
\begin{equation}\label{Kflux}
\sum_j \vec{\nu}_j  \bbs{\Phi^+_j(\peq_c) - \Phi^-_j(\peq_c)} =  \sum_{ \vec{\eta}\in \mathcal{C}} \vec{\eta} \bbs{  \sum_{j: \vec{\nu}^+_j=\vec{\eta}}  \bbs{\Phi^-_j(\peq_c) - \Phi^+_j(\peq_c) }   +  \sum_{ j:  \vec{\nu}^-_j =\vec{\eta}}  \bbs{\Phi^+_j(\peq_c) - \Phi^-_j(\peq_c) }  }=\vec{0},
\end{equation} 
where we used the complex  balance condition \eqref{cDB}.

\subsection{Characterization of RRE steady state for the detailed/complex balance case}

Now we investigate all the steady states of RRE \eqref{odex}, i.e., 
\begin{equation}
S_e:=\{\pe \in \bR^N_+;\, \vec{R}(\pe)= \sum_j \vec{\nu}_j \bbs{\Phi^+_j(\pe) - \Phi^-_j(\pe) } = \vec{0}\}.
\end{equation}

First, we show  uniqueness of positive steady states for the detailed balanced RRE.   
From \eqref{lma}, we have
 \begin{equation}\label{complexIND}
   \vec{\nu}_j \cdot \log \frac{\vx }{\peq}  = \log \bbs{ \prod_{i=1}^{N} \bbs{\frac{ x_i }{ \xss_i }  }^{\nu_{ji}}  }= \log \bbs{  \frac{\Phi_j^-(\vx)}{\Phi_j^+(\vx)} \frac{\Phi_j^+(\peq)}{\Phi_j^-(\peq)}}.
 \end{equation} 
If $\pe >0$, then from \eqref{complexIND}, we have
\begin{equation}
0 = \log \frac{\pe}{\peq} \cdot \vec{R}(\pe) =   \sum_j \bbs{\Phi^+_j (\pe)- \Phi^-_j(\pe)}  \log \bbs{\frac{\Phi^-_j(\pe)}{\Phi^+_j(\pe)}\frac{\Phi^+_j(\peq)}{\Phi^-_j(\peq)}}.
\end{equation}
Under RRE detailed balance condition \eqref{DB}, the above equation implies $\Phi^+_j (\pe)= \Phi^-_j(\pe)$ and thus $\pe$ also satisfies RRE detailed balance. Notice \eqref{lma} and \eqref{DB}  implies identity
\begin{equation}\label{tempPhi}
 \vec{\nu}_j \cdot \log \frac{\vx }{\peq}=\log \bbs{  \frac{\Phi_j^-(\vx)}{\Phi_j^+(\vx)}}.
\end{equation}
We know
$
\log \frac{\pe}{\peq} \in \kk(\nu).
$
Given the stoichiometric space $G$ and $\vq \in \kk(\nu)$, $\vq+G$ is called one stoichiometric compatibility class. Then it is easy to verify that if $\pe$ and $\peq$ are in the same stoichiometric compatibility class, then $\pe=\peq$. Indeed, from $\log \frac{\pe}{\peq} \in \kk(\nu)$ and $\pe-\peq \in G$, we know 
\begin{equation}
\log \frac{\pe}{\peq} \cdot \bbs{\pe -\peq} =0,
\end{equation}
which implies $\pe = \peq$.

This uniqueness of steady states in one stoichiometric compatibility class still holds for the complex balanced system, with a slight modification of the above proof. We conclude the
 following  well-known result on the uniqueness of steady state; c.f., \cite[Theorem 6A]{Horn72}, \cite[Theorem 3.5]{Kurtz15}.

\begin{lem}\label{lem:equi}
Assume there exists a strictly positive steady state $\peq_1$ satisfying complex  balance \eqref{cDB}.
Then for any $\vq\in \kk(\nu)$, there exists a unique steady states $\peq_*$ in the space $ \{\vx \in \vq+G;\,  \vx>0\}$. Moreover, $\peq_*$  
 satisfies complex balance condition \eqref{cDB}, and  is  characterized by
\begin{equation}\label{qq_equi}
(\peq_*)_i =( \peq_1)_i e^{q_i}>0.
\end{equation}
\end{lem}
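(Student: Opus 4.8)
The plan is to reproduce the detailed-balance computation \eqref{complexIND}--\eqref{DB}, replacing per-reaction balancing by per-complex balancing, and to split the statement into existence-with-characterization and uniqueness. Throughout I write $\peq_*:=\peq_1\odot e^{\vq}$, i.e.\ $(\peq_*)_i=(\peq_1)_i e^{q_i}$, and I use repeatedly the two elementary consequences of LMA \eqref{lma},
\[
\Phi_j^{\pm}(\peq_1\odot e^{\vq})=\Phi_j^{\pm}(\peq_1)\,e^{\vec\nu_j^{\pm}\cdot\vq},
\qquad
\vec\nu_j^{+}\cdot\vq=\vec\nu_j^{-}\cdot\vq,
\]
where the second identity holds for every $j$ precisely because $\vq\in\kk(\nu)$ forces $\vec\nu_j\cdot\vq=0$.

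First I would check that $\peq_*$ is a positive complex-balanced steady state. Inserting the first identity into the complex-balance sum \eqref{cDB} for $\peq_*$ at a fixed complex $\vec\eta$, the second identity makes every exponential in that sum equal to $e^{\vec\eta\cdot\vq}$: a reaction with $\vec\nu_j^{+}=\vec\eta$ has $\vec\nu_j^{-}\cdot\vq=\vec\eta\cdot\vq$, and symmetrically when $\vec\nu_j^{-}=\vec\eta$. The sum therefore factors as $e^{\vec\eta\cdot\vq}$ times the complex-balance sum for $\peq_1$, which vanishes by hypothesis; hence $\peq_*$ is complex balanced, and by the rearrangement \eqref{Kflux} it is a steady state, positive because $\peq_1>0$. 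This gives the characterization \eqref{qq_equi}, while the fact that each positive compatibility class does contain a member of the family $\{\peq_1\odot e^{\vec w}:\vec w\in\kk(\nu)\}$ follows by taking the unique minimizer of $\vx\mapsto\KL(\vx\|\peq_1)$ on $\vq+G$, whose optimality condition reads $\log(\vx/\peq_1)\in\kk(\nu)$.

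For uniqueness let $\pe>0$ be any steady state in the same class and set $\vec u:=\log(\pe/\peq_1)$, so that $\pe=\peq_1\odot e^{\vec u}$. Stationarity $\vR(\pe)=0$ together with \eqref{complexIND} gives
\[
0=\big\langle \vec u,\vR(\pe)\big\rangle=\sum_j(\vec\nu_j\cdot\vec u)\big(\Phi_j^{+}(\peq_1)e^{\vec\nu_j^{+}\cdot\vec u}-\Phi_j^{-}(\peq_1)e^{\vec\nu_j^{-}\cdot\vec u}\big).
\]
Under detailed balance \eqref{DB} each summand was individually sign-definite; under complex balance it is not, and at this point I would invoke that $h(\vx):=\KL(\vx\|\peq_1)$ is a strict Lyapunov function whose dissipation $\langle\log(\vx/\peq_1),\vR(\vx)\rangle$ is $\le0$, with equality forcing $\vec\nu_j\cdot\vec u=0$ for every $j$, i.e.\ $\vec u\in\kk(\nu)$. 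Consequently $\pe-\peq_*\in G=\ran(\nu^T)=\kk(\nu)^{\perp}$ is orthogonal to $\log(\pe/\peq_*)\in\kk(\nu)$, so
\[
0=\Big\langle\log\tfrac{\pe}{\peq_*},\,\pe-\peq_*\Big\rangle=\sum_i\big(\log(\pe)_i-\log(\peq_*)_i\big)\big((\pe)_i-(\peq_*)_i\big),
\]
and strict monotonicity of the logarithm (each summand is $\ge0$ and vanishes only when the two coordinates coincide) yields $\pe=\peq_*$, exactly as in the detailed-balance case.

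The main obstacle is the single step just flagged: controlling the global sign of the dissipation $\langle\log(\vx/\peq_1),\vR(\vx)\rangle$ away from detailed balance. Unlike \eqref{DB}, the summands above are not separately signed, so the conclusion that an arbitrary positive steady state must be complex balanced (equivalently $\vec u\in\kk(\nu)$) is genuinely non-elementary; it rests on the log-sum/convexity estimate of Horn and Jackson \cite[Theorem 6A]{Horn72} (equivalently \cite[Theorem 3.5]{Kurtz15}), which I would cite rather than reprove, combining it with \eqref{Kflux} to express $\vR$ through the complexes. A minor secondary point is the bookkeeping identifying the multiplicative parametrization $\log(\peq_*/\peq_1)=\vq$ with the additive class $\vq+G$, which is handled by the direct-sum decomposition \eqref{direct}.
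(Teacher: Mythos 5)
Your proof is correct and follows essentially the same route as the paper: the detailed-balance orthogonality computation built on \eqref{complexIND}--\eqref{tempPhi}, with the key non-elementary step for the complex-balance case (the sign of the dissipation $\la \log(\vx/\peq_1),\vR(\vx)\ra$ and the resulting conclusion $\log(\pe/\peq_1)\in\kk(\nu)$) delegated to \cite[Theorem 6A]{Horn72} (equivalently \cite[Theorem 3.5]{Kurtz15}), exactly as the paper does when it describes the extension as ``a slight modification'' of the detailed-balance argument and cites the same sources. If anything your write-up is more complete, since you also supply two steps the paper leaves implicit: the scaling identity $\Phi_j^{\pm}(\peq_1\odot e^{\vq})=\Phi_j^{\pm}(\peq_1)e^{\vec{\nu}_j^{\pm}\cdot\vq}$ showing that $\peq_1\odot e^{\vq}$ is itself complex balanced, and the existence of a steady state in each positive compatibility class via the minimizer of $\KL(\cdot\|\peq_1)$ on $\vq+G$.
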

As a consequence, if $\peq_1$ satisfies RRE detailed balance \eqref{DB}, thus it also satisfies \eqref{cDB}. So \eqref{qq_equi} still holds and this unique steady state $\peq_*$ in    the space $ \{\vx \in \vq+G;\,  \vx>0\}$ is  RRE detailed balanced. 
  For both the detailed/complex balanced RRE system, 
 $\peq$  constructs a Lyapunov function for \eqref{odex}, known as the relative entropy 
$
\KL(\vec{x}|| \peq) = \sum_i \bbs{  x_i \ln \frac{x_i}{x^{\xs}_i} - x_i + \xss_i}.
$
  Since $\KL(\vx||\peq)$ is strictly convex for $ \{\vx \in \vq+G;\,  \vx>0\}$, so one also have  local stability of the RRE detailed/complex balanced steady state $\peq_*$.   To obtain   global stability of $\peq_*$, a necessary condition (see \cite{sontag2001structure}) is that there shall be no  equilibrium on the boundary of $\bR^N_+$ for the positive stoichiometric compatibility class, i.e., $\{\vx \in \vq+G;\,  \vx>0\}$. We refer to \cite{anderson2008global} for more detailed conditions to obtain global stability.

\subsubsection{Deficiency zero theorem}\label{subsec_de}
The complex balance condition is an important property for balance between the product complex and the reactant complex. It also motivates a more important index theorem based only on the graph structure of the reaction networks. Recall the complex space 
$
\mathcal{C}= \{\vec{\nu}_j^\pm\}_{j=1:M}
$ 
and species ${X}=\{X_i\}_{i=1:N}$.
A reaction network, denoted as $({X}, \mathcal{C}, \mathcal{R})$, is a  directed graph with nodes given by the complexes $\mathcal{C}$ and directed edges given by reactions $\mathcal{R}=\{\vec{\nu}_j^+ \to \vec{\nu}_j^-\}$. Each connected  subgraph (regarded as undirected subgraph)  is called a linkage class and denote the total number of the linkage classes of the reaction graph as $\ell$.  Denote the total number of distinct complex as $n_c$ and denote the rank of $\nu$ as $s$. Then the deficiency of the reaction network is $\delta:= n_c - \ell-s\geq 0.$ In \cite{feinberg1972chemical}, \textsc{Feinberg} proved that a deficiency zero network, i.e., $\delta=0$ is equivalent condition for that  the equilibrium for \eqref{odex} is complex balanced. Therefore, the equilibrium of the RRE \eqref{odex} can be characterized using the deficiency zero theorem, which relies only on the network structure of $({X}, \mathcal{C}, \mathcal{R})$. More precisely,  we call the reaction network is weakly reversible if for any path connecting from complex $\mathcal{C}_i$ to complex $\mathcal{C}_j$, one can always find a path connecting from complex $\mathcal{C}_j$ to complex $\mathcal{C}_i$. Then the deficiency zero theorem proved by \textsc{Horn, Feinberg} \cite{feinberg1972chemical, Horn72} states that if a chemical reaction network with LMA satisfies (i) $\delta=0$ and (ii) weakly reversibility, then there is a unique positive steady state in each  stoichiometric compatibility class; see also \cite[Theorem 7.1.1]{Feinberg_2019}.

\subsection{WKB expansion and Hamilton-Jacobi equation}\label{subsec_wkb}
In this section, we  use the WKB analysis of the CME for $p(\vxv,t)$  to study the exponential asymptotic behavior. We will investigate some good properties of the resulting HJE and the associated Hamiltonian $H(\vp,\vx)$ defined in \eqref{H}.
Recall 
the large number process $\cv (t)$ in \eqref{Csde}, which is also denoted as $C_t$ for simplicity.
For  fixed $V$, recall $Q^*_{\V}$ defined in \eqref{rp_eq}. Then for any continuous test function $f(\vxv)$, we have
\begin{equation} \label{generator}
\begin{aligned}
 \frac{\ud}{\ud t}\sum_{\vxv}f(\vxv) p(\vxv, t)   
= \sum_{\vxv} (Q_{\V} f)(\vxv)   p(\vxv,t).
\end{aligned}
\end{equation}
{\blue Here $Q_{\V}$ is the duality of $Q^*_{\V}$, see explicit definition in  \eqref{p_eq} after including `no reaction' boundary condition.}

Denote 
\begin{equation}
w(\vxv, t) = \bE^{\vxv}\bbs{f(C_t)},
\end{equation}
then $w(\vxv,t)$ satisfies the backward equation
\begin{equation}
\pt_t w = Q_{\V} w, \quad w(\vxv, 0)=f(\vxv).
\end{equation}
{\blue We refer to \cite{GL_vis} for  the well-posedness of the backward equation after including `no reaction' boundary condition.}

Assume there exists a smooth enough function $u(\vx,t)$ such that at $\vx=\vx_{\V}$, we have WKB reformulation
\begin{equation}
w(\vxv, t) = e^{V u(\vxv,t)}.
\end{equation} 
We obtain
\begin{equation}
\pt_t u(\vxv,t) = \frac{1}{V} e^{-V u(\vxv,t)} Q_{\V} e^{V u(\vxv,t)} =: \frac{1}{V} H_{V}(V u), \quad u(\vxv,0) = \frac{1}{V}\log f(\vxv).
\end{equation}
In summary, 
\begin{equation}
u(\vxv,t) =\frac{1}{V} \log w(\vxv,t) =  \frac{1}{V} \log \bE^{\vxv}\bbs{f(C_t)} = \frac{1}{V} \log \bE^{\vxv}\bbs{e^{V u_0(C_t)}} =:\bbs{S_t u_0}(\vxv)
\end{equation}
is the so-called Varadhan's nonlinear semigroup \cite{Varadhan_1966, feng2006large} for process $C_t$.

For any $\vx\in \bR^N_+$, let $\vx_V=\frac{\vec{n}}{V}\to \vx$ as $V\to +\8$. Then the after WKB reformulation at $\vxv\geq 0$ gives
\begin{align*}
Q_{\V} e^{V u(\vxv, t)} =& V \sum_{j=1, \vxv+\frac{\vec{\nu}_j}{V} \geq 0}^M    \Phi^+_j(\vxv)\bbs{ e^{Vu(\vxv+\frac{\vec{\nu_{j}}}{V})} - e^{V u(\vxv)}}    +  V \sum_{j=1,  \vxv-\frac{\vec{\nu}_j}{V} \geq 0}^M \Phi_j^-(\vxv)\bbs{ e^{V u(\vxv-\frac{\vec{\nu_{j}}}{V})} - e^{Vu(\vxv)}}
\end{align*}
For $\vxv\notin\bR^N_+ $, one can define a zero extension for $\tilde{\Phi}_j^\pm(\vxv)$; see \cite{GL_vis}.
Using Taylor's expansion w.r.t $\frac{\vec{\nu}_j}{V}$, we obtain HJE for $u$
\begin{equation}\label{tempHJE}
\pt_t u(\vx,t) =  \sum_{j=1}^M \bbs{ \Phi^+_j(\vec{x})\bbs{e^{\vec{\nu}_j\cdot \nabla u(\vx, t)}-1 }     +   \Phi_j^-(\vec{x})\bbs{ e^{-\vec{\nu}_j\cdot \nabla u(\vx, t)}-1 }  } .
\end{equation}
Similarly, starting from the froward equation \eqref{rp_eq}, one can obtain the HJE \eqref{HJE2psi} for $\psi(\vx,t)$.



\subsubsection{Properties of Hamiltonian $H$}
Recall the matrix form of the macroscopic RRE
\begin{equation}
\frac{\ud}{\ud t} \vx = \nu^T \vr =: \vR(\vx),
\end{equation}
where $\nu\in\bR^{M\times N}$ is a constant matrix. Recall  the mass conservation law of chemical reactions  \eqref{mb} and   direct decomposition \eqref{direct}, which always satisfies
\begin{equation}
\dim \bbs{\ran(\nu^T)} <N.
\end{equation}
  It motivates that for  the WKB expansion and the corresponding relations with the rate function $L$ in the large deviation principle, we will see $L$ make sense in a `more accurate' subspace $G$.

\begin{lem}\label{lem_Hdege}
Hamiltonian $H(\vp,\vx)$ in \eqref{H} is degenerate in the sense that
\begin{equation}\label{Hdege}
H(\vp, \vx) = H(\vp_1, \vx),
\end{equation}
where $\vp_1\in \ran(\nu^T)$ is the direct decomposition of $\vp$ such that
\begin{equation}\label{decom}
\vp = \vp_1 + \vp_2, \quad \vp_1\in \ran(\nu^T),\,\, \vp_2 \in \kk(\nu).
\end{equation} 
\end{lem}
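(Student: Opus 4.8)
The plan is to exploit the fact that the Hamiltonian $H(\vp,\vx)$ in \eqref{H} depends on the momentum $\vp$ only through the finitely many scalar pairings $\vec{\nu}_j \cdot \vp$, $j = 1, \dots, M$. Inspecting \eqref{H}, the $\vx$-dependent coefficients $\Phi_j^\pm(\vx)$ multiply only the exponentials $e^{\pm \vec{\nu}_j \cdot \vp}$, so once the values $\{\vec{\nu}_j \cdot \vp\}_{j=1}^M$ are fixed, the value of $H(\vp, \vx)$ is determined. It therefore suffices to show that every one of these pairings is unchanged when $\vp$ is replaced by its $\ran(\nu^T)$-component $\vp_1$.

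The key step is to recognize that the direct decomposition \eqref{direct}, $\bR^N = \ran(\nu^T) \oplus \kk(\nu)$, is in fact the orthogonal decomposition furnished by the fundamental theorem of linear algebra, i.e. $\kk(\nu) = \bbs{\ran(\nu^T)}^\perp$ under the standard Euclidean inner product. Concretely, the reaction vectors $\vec{\nu}_j$ are precisely the rows of the matrix $\nu$ (equivalently the columns of $\nu^T$ that span $G = \ran(\nu^T)$), since $\nu_{ji} = \nu_{ji}^- - \nu_{ji}^+$. Hence, for any $\vp_2 \in \kk(\nu)$, the relation $\nu \vp_2 = \vec{0}$ reads componentwise as $\vec{\nu}_j \cdot \vp_2 = 0$ for every $j$.

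Combining these two observations closes the argument: writing $\vp = \vp_1 + \vp_2$ as in \eqref{decom} and using $\vec{\nu}_j \cdot \vp_2 = 0$ gives $\vec{\nu}_j \cdot \vp = \vec{\nu}_j \cdot \vp_1$ for all $j$; substituting into \eqref{H} then yields $H(\vp,\vx) = H(\vp_1,\vx)$, which is exactly \eqref{Hdege}. There is no genuine analytic obstacle here — the lemma is essentially a linear-algebra bookkeeping statement. The only point requiring care is correctly matching the reaction vectors $\vec{\nu}_j$ with the rows of $\nu$, so that membership of $\vp_2$ in $\kk(\nu)$ translates into the annihilation of each pairing $\vec{\nu}_j \cdot \vp_2$. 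Once that identification is in place the degeneracy of $H$ in the $\kk(\nu)$ directions is immediate, and it reflects the physical fact that the conserved-quantity directions (the kernel of $\nu$, carrying $\vec{m}$ by \eqref{mb}) support no reactive flux.
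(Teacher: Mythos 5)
Your proof is correct and is essentially the paper's own argument: the paper likewise decomposes $\vp = \vp_1 + \vp_2$ via \eqref{direct} and observes that $\vp_2 \in \kk(\nu)$ forces $\vec{\nu}_j \cdot \vp_2 = 0$ for every $j$, so $H$, which depends on $\vp$ only through the pairings $\vec{\nu}_j \cdot \vp$, is unchanged. Your added remarks on the orthogonality of the decomposition and the row/column identification of $\nu$ are accurate elaborations but do not change the route.
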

\begin{proof}
From the direct decomposition \eqref{direct}, we have \eqref{decom}. Thus $  0 =\vec{\nu}_j \cdot \vp_2$, which implies \eqref{Hdege}.
\end{proof}

\begin{lem}\label{Hconvex}
$H(\vp,\vx)$ defined in \eqref{H} is strictly convex for  $\vp \in G$.
\end{lem}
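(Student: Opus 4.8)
The plan is to show that the Hessian $\nabla^2_{pp} H(\vp,\vx)$ is positive definite once restricted to the stoichiometric subspace $G=\ran(\nu^T)$. First I would differentiate the Hamiltonian \eqref{H} twice in $\vp$; a direct computation gives
\[
\nabla_p H(\vp,\vx) = \sum_{j=1}^M \vec{\nu}_j \bbs{ \Phi^+_j(\vx)\, e^{\vec{\nu}_j\cdot \vp} - \Phi^-_j(\vx)\, e^{-\vec{\nu}_j\cdot \vp} }
\]
and, writing $c_j(\vp,\vx):= \Phi^+_j(\vx) e^{\vec{\nu}_j\cdot \vp} + \Phi^-_j(\vx) e^{-\vec{\nu}_j\cdot \vp}$,
\[
\nabla^2_{pp} H(\vp,\vx) = \sum_{j=1}^M c_j(\vp,\vx)\, \vec{\nu}_j \vec{\nu}_j^T.
\]
Since each rank-one matrix $\vec{\nu}_j\vec{\nu}_j^T$ is positive semidefinite and each weight $c_j\ge 0$, this already yields convexity of $H$ in $\vp$ on all of $\bR^N$; the remaining work is to upgrade this to \emph{strict} convexity on $G$.

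For strict convexity I would test the quadratic form against an arbitrary nonzero $\vec{\xi}\in G$:
\[
\vec{\xi}^{\,T} \nabla^2_{pp} H(\vp,\vx)\, \vec{\xi} = \sum_{j=1}^M c_j(\vp,\vx)\,(\vec{\nu}_j\cdot \vec{\xi})^2.
\]
Two ingredients make this strictly positive. The first is positivity of the weights: at an interior state $\vx>0$ the LMA \eqref{lma} gives $\Phi^\pm_j(\vx)=k_j^\pm\prod_\ell (x_\ell)^{\nu^\pm_{j\ell}}$, so $c_j(\vp,\vx)>0$ for every reaction channel $j$ that is genuine (at least one of $k_j^\pm$ positive). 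The second is the orthogonal decomposition \eqref{direct}, i.e. $\ran(\nu^T)=\kk(\nu)^\perp$: if $\vec{\nu}_j\cdot \vec{\xi}=0$ for all $j$, then $\nu\vec{\xi}=\vec{0}$, so $\vec{\xi}\in\kk(\nu)$, which forces $\vec{\xi}=\vec{0}$ because $\vec{\xi}\in G=\ran(\nu^T)$ and $G\cap\kk(\nu)=\{\vec{0}\}$. Hence, for nonzero $\vec{\xi}\in G$, at least one summand with $c_j>0$ and $(\vec{\nu}_j\cdot\vec{\xi})^2>0$ survives, and the quadratic form is strictly positive, proving $H(\cdot,\vx)$ is strictly convex on $G$.

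The step I expect to be the only genuine obstacle is the first ingredient, the uniform positivity of the coefficients $c_j$, rather than the spanning/kernel argument, which is automatic from the definition $G=\ran(\nu^T)=\mathrm{span}\{\vec{\nu}_j\}$. Care is needed because the fluxes $\Phi^\pm_j$ can vanish on the boundary of $\bR^N_+$ and because a purely irreversible channel has one of $k_j^\pm$ equal to zero; I would therefore phrase the strict convexity for interior $\vx>0$, noting that a single positive rate per reaction keeps every reaction vector $\vec{\nu}_j$ active, so the $\vec{\nu}_j$ with $c_j>0$ still span $G$ and the kernel argument applies verbatim. Along the boundary one retains convexity but loses strictness precisely in directions for which all active fluxes degenerate.
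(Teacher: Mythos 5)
Your proof is correct and takes essentially the same route as the paper's: both compute the second derivative of $H$ in $\vp$, obtaining $\sum_j c_j(\vec{\nu}_j\cdot\vec{\xi})^2$ with $c_j=\Phi^+_je^{\vec{\nu}_j\cdot\vp}+\Phi^-_je^{-\vec{\nu}_j\cdot\vp}$, and then use the direct decomposition $\bR^N=\ran(\nu^T)\oplus\kk(\nu)$ to show that a nonzero $\vec{\xi}\in G$ cannot be annihilated by every $\vec{\nu}_j$. Your explicit caveat about positivity of the weights (interior $\vx>0$, at least one positive rate per channel, degeneracy on the boundary of $\bR^N_+$) is left implicit in the paper's proof, so your version is if anything slightly more careful.
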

\begin{proof}
We compute the Hessian of $H$ in $G\times \bR^N$.  For any $\vec{\alpha}\in G$,
\begin{align*}
 \frac{\ud^2}{\ud \eps^2}\Big|_{\eps=0} H(\vp+\eps \vec{\alpha},\vx)  = \sum_{j=1}^M \bbs{\vec{\nu}_j \cdot \vec{\alpha}}^2 \bbs{\Phi^+_j(\vec{x})e^{\vec{\nu_j} \cdot \vec{p}} + \Phi^-_j(\vec{x})e^{-\vec{\nu_j} \cdot \vec{p}} }\geq 0
\end{align*}
and the equality holds if and only if $\nu \va =\vec{0}$. Since $\va\in G=\ran(\nu^T)$, there exists a  vector $\vb \in \bR^M$ such that $\va = \nu^T \vb$. Thus the equality above holds if and only if 
\begin{equation}
0=\vb^T \nu \va= \vb^T\nu \nu^T \vb,
\end{equation}
which implies $\vec{\alpha} = \vec{0}\in G.$
\end{proof}

\subsection{The convex conjugate $L(\vs,\vx)$ gives the rate function in large deviation principle}\label{sec4.0}
Let us first introduce the convex conjugate function $L$ and   the associated action functional.
Since $H$ defined in \eqref{H} is convex w.r.t $\vp$,  we compute the convex conjugate of $H$ via the Legendre transform. For any $\vs\in \bR^N$, define
\begin{equation}\label{L}
L(\vs,\vec{x}) := \sup_{\vec{p}\in \bR^N} \bbs{ \la \vp ,  \vs \ra - H(\vp, \vx)} =  \la \vp^* ,  \vs \ra - H(\vp^*, \vx)
\end{equation}
where $\vp^*(\vs, \vx)$ solves
\begin{equation}\label{ts1}
\vs = \nabla_p H(\vp^*, \vx) = \sum_j \vec{\nu}_j \bbs{\Phi_j^+ e^{\vec{\nu}_j \cdot \vp^*} - \Phi_j^- e^{-\vec{\nu}_j \cdot \vp^*} }.
\end{equation}
  Recall here  notation  $\nabla_p H$ is a vector $\bbs{\pt_{p_i}H}_{i=1:N}$.
Thus 
\begin{equation}\label{ts2}
L(\vs, \vx) = \vs \cdot \vp^*(\vs,\vx) - H(\vp^*(\vs,\vx),\vx).
\end{equation}
Define the action functional as
\begin{equation}\label{A}
\ac(\vx(\cdot)) = \int_0^T L(\dot{\vx}(t), \vx(t)) \ud t.
\end{equation}
Then we have the following lemma.
\begin{lem}\label{lem:least}
For $L$ function defined in \eqref{L}, we know
\begin{enumerate}[(i)]
\item $L(\vs,\vx)\geq 0$ and 
\begin{equation}\label{LL}
L(\vs, \vx) = \left\{ \begin{array}{cc}
\max_{\vp \in G} \{ \vs \cdot \vp - H(\vp, \vx) \}, & \vs\in G\\
+\8, & \vs \notin G;
\end{array}  \right.
\end{equation}
moreover, $L$ is strictly convex in $G$;
\item For  the action functional $\ac(\vx(\cdot))$ in \eqref{A}
the least action $\vx(t)$ satisfies
the Euler-Lagrange equation 
\begin{equation}\label{EL}
\frac{\ud}{\ud t} \bbs{\frac{\pt L}{\pt \dot{\vx}}(\dot{\vx}(t), \vx(t))  }= \frac{\pt L}{\pt \vx}(\dot{\vx}(t), \vx(t)),
\end{equation}
which is equivalent to the Hamiltonian dynamics with $H$ defined in \eqref{H}
\begin{equation}\label{HM}
\frac{\ud}{\ud t} \vx = \nabla_p H(\vp, \vx), \quad \frac{\ud}{\ud t} \vp = - \nabla_x H(\vp , \vx);
\end{equation}
\item $\vx(t)$ is the solution to RRE \eqref{odex} if and only if $\ac(\vx(\cdot))=0$.
\end{enumerate} 
\end{lem}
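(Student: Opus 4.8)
The plan is to handle the three parts in order, carrying out all the analysis inside the stoichiometric subspace $G=\ran(\nu^T)$, where by Lemma \ref{Hconvex} the Hamiltonian is strictly convex, and to exploit throughout the single identity $H(\vec{0},\vx)=0$, which is immediate from \eqref{H} since every summand vanishes at $\vp=\vec{0}$.

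For (i), nonnegativity follows by taking $\vp=\vec{0}$ in the supremum \eqref{L}, giving $L(\vs,\vx)\geq \la \vec{0},\vs\ra - H(\vec{0},\vx)=0$. For the dichotomy \eqref{LL} I would invoke the degeneracy from Lemma \ref{lem_Hdege}: writing $\vp=\vp_1+\vp_2$ with $\vp_1\in G$ and $\vp_2\in\kk(\nu)$, the Hamiltonian only sees $\vp_1$, so $\la\vp,\vs\ra - H(\vp,\vx)=\la\vp_1,\vs\ra+\la\vp_2,\vs\ra-H(\vp_1,\vx)$. Using the orthogonality $G\perp\kk(\nu)$, if $\vs\in G$ the term $\la\vp_2,\vs\ra$ vanishes and the supremum collapses to one over $\vp_1\in G$; if $\vs\notin G$ its $\kk(\nu)$-component is nonzero and sending $\vp_2$ to infinity along that component drives the supremum to $+\8$. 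Strict convexity of $L$ on $G$ is then the standard dual statement to the strict convexity of $H$ on $G$ (Lemma \ref{Hconvex}): since $H$ grows exponentially, hence superlinearly, along $G$, its restricted Legendre transform is finite, $C^1$, and strictly convex there.

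For (ii), this is the classical Legendre equivalence between the Euler--Lagrange and Hamilton formulations. A minimizer of $\ac$ satisfies the first-order condition \eqref{EL} by the usual vanishing-first-variation argument. Introducing the conjugate momentum $\vp:=\partial_{\dot{\vx}}L(\dot{\vx},\vx)$ and using the duality relations $\vs=\nabla_p H(\vp,\vx)\llra \vp=\nabla_s L(\vs,\vx)$ together with $\nabla_x L=-\nabla_x H$ along conjugate pairs, \eqref{EL} rewrites as $\dot{\vp}=-\nabla_x H(\vp,\vx)$ while the definition of $\vp$ inverts to $\dot{\vx}=\nabla_p H(\vp,\vx)$, which is exactly \eqref{HM}. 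Here the degeneracy must be respected: since $\vx(t)$ stays in a fixed compatibility class $\vq+G$ one has $\dot{\vx}\in G$, so the inversion of the Legendre map occurs within $G$, where Lemma \ref{Hconvex} makes it a diffeomorphism.

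For (iii), the forward implication uses that along a solution of RRE \eqref{odex} the velocity is $\dot{\vx}=\sum_j\vec{\nu}_j(\Phi^+_j-\Phi^-_j)=\nabla_p H(\vec{0},\vx)$, so the maximizer in \eqref{L} is $\vp^*=\vec{0}$ (unique by strict convexity) and hence $L(\dot{\vx},\vx)=\la\vec{0},\dot{\vx}\ra-H(\vec{0},\vx)=0$, giving $\ac(\vx(\cdot))=0$. For the converse, $\ac(\vx(\cdot))=0$ together with $L\geq 0$ forces $L(\dot{\vx}(t),\vx(t))=0$ for a.e. $t$, whence $\dot{\vx}(t)\in G$ by (i). Viewing $g(\vp):=\la\vp,\dot{\vx}\ra-H(\vp,\vx)$ on $G$, the value $0$ is attained both at $\vp=\vec{0}$ and as the supremum, so $\vp=\vec{0}$ maximizes $g$ over $G$; the first-order optimality condition on $G$ then yields $\dot{\vx}=\nabla_p H(\vec{0},\vx)$, which is precisely RRE \eqref{odex}. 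I expect the main obstacle to be bookkeeping the degeneracy consistently, that is, ensuring every Legendre inversion, strict-convexity claim, and differentiation of the supremum is performed on $G$ rather than on all of $\bR^N$, since off $G$ the Lagrangian is $+\8$ and $H$ is flat; once this is in place, the substantive content of all three parts reduces to the single observation that the RRE velocity equals $\nabla_p H(\vec{0},\vx)$ and that $L$ vanishes exactly at this velocity.
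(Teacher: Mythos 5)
Your proposal is correct, and for parts (i) and (ii) it follows essentially the paper's own route: nonnegativity from $H(\vec{0},\vx)\equiv 0$, the dichotomy \eqref{LL} from the decomposition $\vp=\vp_1+\vp_2$ with $\vp_2\in \kk(\nu)$ (Lemma \ref{lem_Hdege}) together with coercivity of $H$ on $G$, and the classical Legendre equivalence for \eqref{EL}$\llra$\eqref{HM} via $\nabla_x L(\vs,\vx)=-\nabla_x H(\vp^*(\vs,\vx),\vx)$. Where you genuinely diverge is the converse direction of (iii). The paper takes the zero-action curve to be a least action curve obeying the Hamiltonian dynamics \eqref{HM} and argues by cases on the momentum: if $\vp(t^*)=\vq\in\kk(\nu)$ at some time, then $\dot{\vp}=-\nabla_x H(\vq,\vx)=\vec{0}$ freezes $\vp\equiv\vq$ and $\dot{\vx}=\nabla_p H(\vq,\vx)=\nabla_p H(\vec{0},\vx)$ recovers RRE \eqref{odex}; if instead $\vp(t)\notin\kk(\nu)$ for all $t$, strict convexity forces $L(\dot{\vx},\vx)>0$, contradicting $\ac(\vx(\cdot))=0$. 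Your argument is pointwise and more elementary: $\ac=0$ with $L\geq 0$ gives $L(\dot{\vx}(t),\vx(t))=0$ a.e., hence $\dot{\vx}\in G$ by (i), and since $g(\vp)=\la \vp,\dot{\vx}\ra-H(\vp,\vx)$ attains its supremum $0$ at the interior point $\vp=\vec{0}$ of the subspace $G$, stationarity on $G$ gives that $\dot{\vx}-\nabla_p H(\vec{0},\vx)$ is orthogonal to $G$; as both vectors lie in $G$ (the second because $\nabla_p H(\vec{0},\vx)=\sum_j\vec{\nu}_j(\Phi^+_j-\Phi^-_j)\in\ran(\nu^T)$), they are equal. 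This buys you something real: you never need to know that a zero-action curve satisfies the Euler--Lagrange/Hamiltonian equations, a regularity assumption the paper's case analysis tacitly uses, and you get the a.e.\ identity $\dot{\vx}=\nabla_p H(\vec{0},\vx)$ directly for any absolutely continuous curve of zero action. Two small points of precision: in the forward direction, uniqueness of the maximizer $\vp^*=\vec{0}$ holds only modulo $\kk(\nu)$ (attainment is all that is needed); and your phrase that strict convexity of $H$ on $G$ is ``the standard dual statement'' yielding strict convexity of $L$ pairs the wrong properties --- under Legendre duality, strict convexity of $H$ gives differentiability of $L$, while smoothness of $H$ gives strict convexity of $L$. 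Since $H$ here is both smooth and strictly convex on $G$ (Lemma \ref{Hconvex}), your conclusion stands; the paper makes it explicit via the monotonicity computation $(\vs_1-\vs_2)\cdot\bbs{\nabla_s L(\vs_1,\vx)-\nabla_s L(\vs_2,\vx)}=(\vp_1-\vp_2)\cdot\bbs{\nabla_p H(\vp_1,\vx)-\nabla_p H(\vp_2,\vx)}>0$.
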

\begin{proof}
(i) First, from \eqref{Hf}, $H(\vec{0},\vx)\equiv 0$ thus we know $L(\vs,\vx)\geq 0$. 

Second, from Lemma \ref{lem_Hdege}, we know for $\vs\in \bR^N$,
\begin{equation}
\begin{aligned}
L(\vs,\vx)=&\sup_{\vp\in \bR^N} \bbs{\la (\vp, \vs \ra - H(\vp, \vx)}\\
=& \sup_{\vp\in \bR^N}   \bbs{\la \vp_1, \vs \ra+ \la \vp_2, \vs \ra - H(\vp_1, \vx)},
\end{aligned}
\end{equation}
where $\vp_1\in G$ and $\vp_2\in \kk(\nu)$ are direct decomposition of $\vp$. Therefore, for $\vs\notin G$,
\begin{equation}\label{sG}
L(\vs,\vx)\geq \sup_{\vp_2\in \kk(\nu), \vp_1=\vec{0}}   \bbs{\la \vp_1, \vs \ra+ \la \vp_2, \vs \ra - H(\vp_1, \vx)} = \sup_{\vp_2\in \kk(\nu)} \la \vp_2, \vs \ra =+\8.
\end{equation}
On the other hand, for $\vs\in G$,
\begin{equation}\label{chL}
L(\vs,\vx) = \sup_{\vp_1\in G}   \bbs{\la \vp_1, \vs \ra - H(\vp_1, \vx)}.
\end{equation}
 From the definition of $H$, we know
$H$ has a lower bound and is exponentially coercive. Indeed,
\begin{equation}
\lim_{|\vp|\to+\8} H(\vp,\vx) \geq \lim_{|\vp|\to+\8} \sum_j\bbs{ \min(\Phi^+_j, \Phi_j^-) e^{(\vec{\nu}_j \cdot \hat{\vp})|\vp|} - \Phi_j^+(\vx)-\Phi_j^-(\vx)} = +\8.
\end{equation}
Therefore, the sup in \eqref{sG} can be achieved and we conclude \eqref{LL}. 

Third, we show that strict convexity of $H$ in $G$ implies   strict convexity of $L$ in $G$. For any $\vs_1, \vs_2\in G$, from \eqref{chL} above, there exist $\vp_1,\vp_2\in G$ such that $\vs_1=\nabla_p H(\vp_1, \vx), \, \vs_2=\nabla_p H(\vp_2, \vx)$. Then we have 
\begin{equation}
(\vs_1 - \vs_2) \cdot \bbs{\nabla_s L(\vs_1, \vx) - \nabla_s L(\vs_2, \vx) } = \bbs{\nabla_p H(\vp_1, \vx) - \nabla_p H(\vp_2, \vx)} \cdot (\vp_1 - \vp_2)>0
\end{equation}
due to the strict convexity of $H$ in $G$.

(ii) Let $\vx(t)$ be the least action such that 
\begin{equation}\label{least_ac}
\vx(\cdot) = \text{arg}\min_{\vx(0)=\vx_0, \vx(T)=\vec{b}} \ac(\vx(\cdot)). 
\end{equation}
 Then $\vx(t)$ satisfies the Euler-Lagrange equation \eqref{EL}.
From   \eqref{ts1}, \eqref{ts2}, we know given $\vs, \vx$ 
\begin{equation}
\frac{\pt L}{\pt \vx}(\vs,\vx) = - \nabla_x H(\vp^*(\vs,\vx), \vx).
\end{equation}
Thus  for $\vp = \frac{\pt L}{\pt \vs}$, the Hamiltonian dynamics \eqref{HM} holds. 

(iii) First,  let $\vx(t)$ is the solution to   RRE \eqref{odex} with initial data $\vx_0$ and set $\vp(0)=\vec{0}$. Then
\begin{equation}
\vp\equiv \vec{0}, \quad \frac{\ud}{\ud t} \vx = \nabla_p H(\vec{0}, \vx).
\end{equation}
This corresponds to a least action $\vx(t)$ such that $\ac(\vx(\cdot))=0$.

Second, assume $\vec{x}$ is a least action such that $\ac(\vx(\cdot))=0$, then $L(\dot{\vx}(t),\vx(t))\equiv 0$ for all $t\in[0,T]$ and the Hamiltonian dynamics \eqref{HM} holds. It is sufficient to prove the following two cases. \\
Case(I), if there exists $t^*$ such that $\vp(t^*)=\vq$ for some $\vq\in \kk(\nu)$, then $\vp(t)\equiv \vq\in\kk(\nu)$ because $\dot{\vp}=-\nabla_x H(\vq,\vx)=\vec{0}$. Thus
\begin{equation}
\frac{\ud}{\ud t}\vec{x} =  \nabla_p H(\vq, \vec{x}) = \nabla_p H(\vec{0}, \vec{x}) = \sum_{j=1}^M \vec{\nu}_j \bbs{\Phi^+_j(\vec{x}) - \Phi^-_j(\vec{x})}
\end{equation}
implies $\vx(t)$ is the solution to RRE \eqref{odex}. \\
Case (II), if $\vp(t) \notin \kk(\nu)$ for all $t\in[0,T]$, then we know $\vec{\nu}_j \cdot \vp \neq 0$. Then from \eqref{ts1} and Lemma \ref{Hconvex}, we have
\begin{equation}
\dot{\vx}(t) = \vs = \nabla_p H(\vp(t),\vx(t)) \neq \nabla_p H(\vec{0},\vx(t)).
\end{equation}
However, from the strict convexity of $L$, $L(\vs,\vx)>0$ for $\vs\neq \nabla_p H(\vec{0},\vx(t))$, which contradicts with $\ac(\vx(\cdot))=0.$

 Thus we conclude (iii).
\end{proof}

As mentioned in the introduction,     Lax-Oleinik's representation \eqref{LO}, to which the Varadhan's nonlinear semigroup converges, shows that the   function $L$ defined in \eqref{L}  actually gives the good rate function for the large derivation principle  for the large number process $\cv (t)$ at single times. Precisely,
\begin{thm}[\cite{GL_vis}]
Let $\cv $ be the large number process   defined in \eqref{Csde} with generator $Q_{\V}$. Assume $\cv (0)=\vx_0^{\V}$ satisfying $\vx_0^{\V} \to \vx_0$ in $\bR^N$.  Then at each time $t$, the random variable $\cv (t)$  satisfies the large deviation principle in $\bR^N_+$ with the good rate function $I_{x_0,t}(\vy)$ defined in \eqref{LO}.
That is, for any open set $\mathcal{O}\subset  \bR^N_+$, it holds
\begin{equation}
\liminf_{V\to +\8} \frac{1}{V} \log \bP_{\vx_0^V} \{\cv (t) \in  \mathcal{O}\} \geq - \inf_{\vx\in \mathcal{O}} I_{\vx_0, t}(\vx)
\end{equation}
while for any closed set $\mathcal{C}\subset \bR^N_+$, it holds
\begin{align}
\limsup_{V\to +\8} \frac{1}{V} \log \bP_{\vx_0^V} \{\cv (t) \in\mathcal{C}\} \leq - \inf_{\vx\in \mathcal{C}} I_{\vx_0, t}(\vx).
\end{align}
\label{LD11}
\end{thm}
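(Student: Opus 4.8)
The plan is to deduce the large deviation principle from Varadhan's inverse (Bryc's) lemma \cite{Bryc_1990}, which reduces the task to two ingredients: (a) convergence of the logarithmic moment generating functionals (the Laplace principle) for a rich enough class of test functions, and (b) exponential tightness of the single-time marginal $\cv(t)$. Once both hold, Bryc's lemma yields the full upper and lower bounds, and forces the rate function to be the Legendre-type dual of the limiting Laplace functional. Goodness of $I_{\vx_0,t}$, i.e. lower semicontinuity with compact sublevel sets, follows partly automatically from exponential tightness and partly from the structural properties of $L$ collected in Lemma \ref{lem:least}: $L\geq 0$, its strict convexity in the stoichiometric subspace $G$, and the exponential coercivity of $H$ used there to guarantee superlinear growth of $L$.

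For ingredient (a) I would exploit that the Laplace functional \emph{is} Varadhan's nonlinear semigroup \eqref{semigroup}: for every bounded continuous $u_0$,
\[
\frac{1}{V}\log \bE^{\vxv}\bbs{e^{V u_0(\cv_t)}} = \bbs{S_t u_0}(\vxv),
\]
so it suffices to prove that $\bbs{S_t u_0}(\vxv)$ converges, as $V\to\8$ and $\vxv\to\vx_0$, to the unique viscosity solution $u(\vx_0,t)$ of the Hamilton-Jacobi equation \eqref{HJEu} with datum $u_0$. This solution admits the Lax-Oleinik representation \eqref{LO}, which evaluated at $\vx_0$ reads $u(\vx_0,t)=\sup_{\vy}\bbs{u_0(\vy)-I_{\vx_0,t}(\vy)}$. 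This is precisely the Laplace limit demanded by Bryc's lemma, and it identifies the candidate rate function as $I_{\vx_0,t}$.

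For ingredient (b) I would establish exponential tightness directly from the random time-changed Poisson representation \eqref{Csde}. For each level $M$ one seeks a compact $K_M\subset\bR^N_+$ with $\limsup_{V}\frac1V\log\bP_{\vxv}\{\cv(t)\notin K_M\}\leq -M$. Exponential martingale (Doob) estimates applied to the compensated Poisson integrals, dominated by barrier functions that control $H$ at large $|\vx|$, give the required far-field decay; these are the same barriers that appear in the semigroup convergence.

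The main obstacle is ingredient (a): convergence of $S_t$ to the \emph{unique} viscosity solution of \eqref{HJEu}. Two features obstruct a routine comparison argument. First, the `no reaction' boundary condition confines the dynamics to $\bR^N_+$ and breaks the translation structure near $\partial\bR^N_+$. Second, the coefficients $\Phi^\pm_j(\vx)$ grow polynomially in $\vx$ while $H$ grows exponentially in $\vp$, so the Hamiltonian is neither bounded nor uniformly coercive. Following \cite{GL_vis}, the resolution is to recast $S_t$ as a monotone, stable, consistent scheme in the Barles--Souganidis sense, pass to the upper and lower semicontinuous relaxed limits $\bar u$ and $\underline u$, and show they are respectively a viscosity sub- and supersolution whose envelopes inherit the `no reaction' constraint up to the boundary. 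Restoring a comparison principle then hinges on constructing barrier functions that dominate the far-field growth of $H$; with these in hand one obtains $\bar u\leq\underline u$, which forces $\bar u=\underline u=u$ and hence the desired convergence.
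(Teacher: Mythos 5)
Your proposal reproduces essentially the same argument the paper attributes to \cite{GL_vis}: convergence of Varadhan's nonlinear semigroup \eqref{semigroup} to the unique viscosity solution of \eqref{HJEu} via a monotone-scheme (Barles--Souganidis) reformulation, with semicontinuous envelopes inheriting the `no reaction' boundary condition and barrier constructions taming the polynomial growth of $\Phi^\pm_j$, then identification of the rate function through the Lax--Oleinik formula \eqref{LO} and Bryc's inverse lemma combined with exponential tightness at single times. This matches the paper's route, correctly flagging the two genuine difficulties and their resolutions; the only point worth noting is that the paper also records a second, softer derivation of Theorem \ref{LD11} as a contraction of the sample-path LDP of Theorem \ref{LD22}, which your semigroup argument deliberately avoids.
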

 
The    sample path large deviation principle  in the path space $D([0,T];\bR^N_+)$, i.e., the space of c\`adl\`ag functions, which is proved by \textsc{Agazzi} et.al. \cite{Dembo18}, is more difficult and significant. 
 Under some mild assumptions ensuring the existence of solution $\cv (t)$, \cite{Dembo18}   proved the sample path large derivation principle for $\{\cv (t)\}$, as restated   below. 
\begin{thm}[Theorem 1.6, \cite{Dembo18}]
Let $\cv $ be the large number process   defined in \eqref{Csde} with generator $Q_{\V}$ defined in \eqref{generator}. Assume $\cv (0)=\vx_0^V$ satisfying $\vx_0^V \to \vx_0$ in $\bR^N$.  Then the sample path $\cv (t)$, $t\in [0,T]$ satisfies the large deviation principle in $D([0,T];\bR^N_+)$ with the good rate function
\begin{equation}
A_{x_0, T}(\vx(\cdot)) := \left\{ 
\begin{array}{cc}
\int_0^T L(\dot{\vx}(t),\vx(t)) \ud t & \text{ if } \vx(0)=\vx_0,\,\, \vx(\cdot)\in AC([0,T];\bR^N),\\
+\8 & \text{ otherwise.}
\end{array}
\right. 
\end{equation} 
That is, for any open set $\mathcal{E}\subset D([0,T];\bR^N_+)$, it holds
\begin{equation}
\liminf_{V\to +\8} \frac{1}{V} \log \bP_{\vx_0^V} \{\cv (t) \in  \mathcal{E}\} \geq - \inf_{\vx\in \mathcal{E}} A_{\vx_0, T}(\vx(\cdot)),
\end{equation}
while for any closed set $\mathcal{G}\subset D([0,T];\bR^N_+)$, it holds
\begin{align}
\limsup_{V\to +\8} \frac{1}{V} \log \bP_{\vx_0^V} \{\cv (t) \in \mathcal{G}\} \leq - \inf_{\vx\in \mathcal{G}} A_{\vx_0, T}(\vx(\cdot)),
\end{align}
where $D([0,T];\bR^N_+)$ is Skorokhod space and $AC([0,T];\bR^N)$ is space of absolute continuous curves.
\label{LD22}
\end{thm}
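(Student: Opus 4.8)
The plan is to prove this sample-path large deviation principle by the Freidlin--Wentzell programme adapted to time-changed Poisson processes: establish exponential tightness of $\{\cv(\cdot)\}_V$ in the Skorokhod space together with matching exponential upper and lower bounds on cylinder sets, and then identify the rate function with $\ac(\vx(\cdot))=\int_0^T L(\dot{\vx},\vx)\ud t$. The engine throughout is the exponential (Dynkin) martingale attached to the generator $Q_{\V}$: for a $C^1$ momentum field $\vp(\cdot)$ there is a mean-one martingale whose logarithm is $V\la\vp(t),\cv(t)\ra$ minus a compensator, and the WKB computation of Section \ref{subsec_wkb} shows this compensator converges to $\int_0^t H(\vp(s),\cv(s))\ud s$ with $H$ as in \eqref{H}. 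This is the mechanism that makes $H$, and hence its Legendre dual $L$ from \eqref{L}, govern the path fluctuations. Since the single-time large deviation principle is already available (Theorem \ref{LD11}), one economical organization is to combine it with the Markov property of $\cv$ to obtain large deviations of all finite-dimensional distributions, after which only exponential tightness and the identification of the rate with $\int_0^T L\,\ud t$ remain.

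For the upper bound I would first prove it on compact sets by the exponential-Chebyshev argument: inserting a piecewise-constant control $\vp(\cdot)$ into the martingale above and optimizing over controls bounds the probability on a cylinder neighbourhood of a path by $\exp\bbs{-V\sup_{\vp}\int_0^T(\la\vp,\dot{\vx}\ra-H(\vp,\vx))\ud t}$, and the supremum is exactly $\ac(\vx(\cdot))$ by the Legendre duality \eqref{L}. The property $L(\vs,\vx)=+\8$ for $\vs\notin G$ from Lemma \ref{lem:least}(i) automatically confines finite-action paths to the stoichiometric directions, reflecting the conservation law \eqref{mb}. To upgrade from compact to closed sets one needs exponential tightness, which I would obtain from the exponential coercivity of $H$ (already used in the proof of Lemma \ref{lem:least}) to control the modulus of continuity of $\cv$ on fine time grids.

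For the lower bound, fix an absolutely continuous path $\phi$ with $\phi(0)=\vx_0$ and $\ac(\phi(\cdot))<\8$; by Lemma \ref{lem:least}(i) necessarily $\dot\phi\in G$ almost everywhere. I would tilt the process by the optimal momentum $\vp^*(t)=\nabla_s L(\dot\phi(t),\phi(t))$, under which $\phi$ becomes the law-of-large-numbers trajectory of the tilted process via the mean-field limit \eqref{lln} applied to the tilted rates. The Radon--Nikodym derivative of the tilt, evaluated along trajectories staying in a tube around $\phi$, equals $\exp\bbs{-V\int_0^T L(\dot\phi,\phi)\ud t+o(V)}$, and combining this with the conditional concentration on the tube yields $\liminf_V \frac1V\log\bP\geq -\ac(\phi(\cdot))$ for any neighbourhood of $\phi$. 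Strict convexity of $L$ on $G$ (Lemma \ref{lem:least}(i)) guarantees that $\vp^*$ is well defined, while the characterization in Lemma \ref{lem:least}(iii) pins the unique zero of the rate function at the RRE solution.

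The hard part will be the boundary of $\bR^N_+$ together with the polynomial, hence non-Lipschitz and unbounded, growth of the rates $\Phi_j^\pm$ in \eqref{lma}: near $\partial \bR^N_+$ the `no reaction' constraints make some rates degenerate, so neither the tilted process of the lower bound nor the moment bounds behind exponential tightness are automatic, and the naive rate function may fail to be lower semicontinuous there. I would handle this by localization and truncation --- cutting the rates off away from the boundary and at large densities, proving the principle for the truncated models, and then removing the cutoffs by showing the discarded events are exponentially negligible and that finite-action paths are approximable in action by paths bounded away from $\partial \bR^N_+$. This boundary analysis, rather than the duality bookkeeping, is the genuinely delicate ingredient, and is precisely where the sharper hypotheses of \cite{Dembo18} enter.
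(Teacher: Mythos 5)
You should first be aware that the paper contains no proof of this statement to compare against: Theorem \ref{LD22} is quoted verbatim from \textsc{Agazzi} et.al. \cite{Dembo18}, and the only argument the paper supplies in its vicinity runs in the opposite direction — it shows how the sample-path LDP \emph{implies} the single-time LDP of Theorem \ref{LD11} (by taking cylinder sets and splitting the action at time $t$), while Theorem \ref{LD11} itself is proved independently in \cite{GL_vis} by semigroup/viscosity-solution methods. Your outline — exponential Dynkin martingales generating $H$, Legendre duality producing $L$, tilting by $\vp^*=\nabla_s L(\dot\phi,\phi)$ for the lower bound, exponential tightness for the upper bound — is indeed the standard Freidlin--Wentzell/Feng--Kurtz blueprint that underlies the cited proof, and you correctly locate the crux at the boundary $\pt\bR^N_+$ and the polynomial growth of the rates.

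As a proof, however, two gaps remain concrete. First, your ``economical organization'' of chaining Theorem \ref{LD11} through the Markov property to get finite-dimensional LDPs does not work as stated: Theorem \ref{LD11} is proved for one fixed sequence of initial data $\vx_0^V\to\vx_0$, whereas the Markov-property argument requires the single-time LDP (and the attendant bounds) to hold \emph{uniformly} over initial conditions ranging in compact sets; that uniformity is an additional estimate you would have to establish, not a corollary. Second, and more seriously, the boundary/truncation step you defer is not merely delicate but conditional: when some $\Phi_j^\pm$ vanish on $\pt\bR^N_+$, the optimal momentum $\vp^*$ can blow up, $L$ loses continuity, and the lower bound with the stated rate function can genuinely fail unless structural hypotheses on the network guarantee that finite-action paths are approximable in action by paths bounded away from the boundary. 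These are exactly the hypotheses of \cite{Dembo18}, which the paper's restatement suppresses under ``some mild assumptions''; your plan of ``removing the cutoffs'' is therefore not a technique to be executed but the very content of those assumptions. In short: your sketch is a faithful roadmap of the external reference's strategy, but the step you label as the hard part is precisely where the proof lives, and it is left unproved.
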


The sample path large deviation principle Theorem \ref{LD22}  covers the above single time result in Theorem \ref{LD11}. Indeed, for any fixed open set $\mathcal{O}\subset \bR^N_+$, one takes special open set $\mathcal{E}\subset D([0,T];\bR^N_+) $ as $\mathcal{E} = \{\vx(\cdot)\in D([0,T];\bR^N_+); \, \vx(t) \in \mathcal{O}\}$.  Then
$$\inf_{\vx\in \mathcal{E}} A_{\vx_0, T}(\vx(\cdot)) = \inf_{\vy \in \mathcal{O}} \bbs{\inf_{\vx(\cdot)\in D([0,T];\bR^N_+),\, \vx(0)=\vx_0,\, \vx(t) = \vy } \int_0^T L(\dot{\vx}(s),\vx(s)) \ud s} = \inf_{\vy \in \mathcal{O}} I_{\vx_0, t}(\vy). $$
Here in the last equality, the least action from $0$ to $T$ is the combination of  the least action from $0$ to $t$ and a zero-cost action for $t$ to $T$.
 However, \cite{GL_vis} gives an alternative proof for the simple case in Theorem \ref{LD11} using semigroup approach.

\section{The dynamic and steady solution to Hamilton-Jacobi equation}\label{sec3}
  {\blue In this section, we first study  a   general Hamiltonian $H(\vp, \vx)$ and its HJE. (i) As a result of  the law of large numbers $p(\vxv,t)\approx \delta_{\vx(t)}$ in the large number limit, the minimizer of the dynamic  solution to the HJE gives the deterministic macroscopic path, which is a solution to the corresponding large number limiting ODE; see Proposition \ref{prop1}. (ii) The steady solution $\psi^{ss}(\vx)$ to the HJE gives a Lyapunov function and a conservative-dissipative decomposition for the macroscopic RRE; see Theorem \ref{thm_decom}. The thermodynamics for detailed/complex balanced RRE and also for general RRE    will be discussed in Section \ref{subsec_thermo} at the mesoscopic scale and in Section \ref{sec_meso2} after the passage from mesoscopic scale to macroscopic scale.}
\subsection{Kurtz's limiting ODE as the minimizer of HJE solution $\psi$ and its Lyapunov function}
  We further observe the following special properties for $H$
\begin{equation}\label{Hf}
H(\vec{0},\vec{x})\equiv 0, \quad \text{ and thus } \nabla_x H(\vec{0}, \vec{x}) \equiv 0.
\end{equation}
 In the following theorem, we will show the solution to the Kurtz limiting ODE \eqref{odex} is the minimizer of HJE solution $\psi$. Indeed,  without any symmetry assumptions, we will prove a general theorem that the HJE solution $\psi(\vx,t)$ from the WKB expansion can always characterize the ODE path given by the law of large numbers, and the steady solution $\psi^{ss}(\vx)$ yields a Lyapunov function to RRE \eqref{odex}. 
\begin{prop}\label{prop1}
Let $\psi_0(\vx)$ be the initial data to HJE \eqref{HJE2psi} with a generic Hamiltonian $H(\vp,\vx)$ satisfying \eqref{Hf}. Assume $\min_{\vx} \psi_0(\vx)=0$ and assume $\psi_0(\vx)$ is smooth,  strictly convex   with  a linear growth at the far field, i.e. 
\begin{equation}
c_1 |\vx| \leq \psi_0(\vx) \leq c_2 |\vx|, \quad \text{ as } |\vx|\to +\8.
\end{equation}
 Then
\begin{enumerate}[(i)]
\item  there exists a unique local-in-time strictly convex classical solution $\psi(\vx, t), \, t\in[0,T]$ to \eqref{HJE2psi};
\item the global viscosity solution to \eqref{HJE2psi} is given by the Lax-Oleinik semigroup (a.k.a the optimal control formulation)
\begin{equation}\label{LO}
\psi(\vx,t) = \inf_{\gamma(\cdot)\in AC([0,t]),\, \gamma(t)=\vx} \int_0^t L(\dot{\gamma}(\tau), \gamma(\tau)) \ud \tau + \psi_0(\gamma(0)), \quad t\in[0,+\8);    
\end{equation}
\item the solution to ODE $\frac{\ud}{\ud t}\vec{x} = \nabla_p H(\vec{0}, \vec{x})$ with initial data $\vx_0=\argmin_{\vx} \psi_0(\vx) $ is the minimizer of $\psi(\vx,t)$, i.e., 
\begin{equation}
\vec{x}^*(t) =  \argmin_{\vec{x}} \, \psi(\vec{x},t), \quad \text{ for all } t\in[0,T].
\end{equation}
Along the ODE solution, $\psi(\vx(t),t)\equiv 0.$
That is to say the trajectory of the corresponding Hamiltonian dynamics with $\vp\equiv \vec{0}$ gives the  mean path in the sense of the weak law of large numbers
\begin{equation}
\lim_{V\to+\8} \bE (\varphi(C^V_t)) = \varphi(\vx^*(t)).
\end{equation}
\end{enumerate}
\end{prop}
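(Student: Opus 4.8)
The plan is to handle the three assertions in turn: build the classical solution by characteristics, identify it with the Lax--Oleinik value function through viscosity uniqueness, and then read off the zero-momentum characteristic to get the minimizer.

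For part (i), I would run the method of characteristics for \eqref{HJE2psi}. The characteristic system is exactly the Hamiltonian flow \eqref{HM} with initial data $\vx(0)=\vx_0$, $\vp(0)=\nabla\psi_0(\vx_0)$, carrying the value along each path by $\dot z = \vp\cdot\nabla_p H - H = L(\dot{\vx},\vx)$. Since $\psi_0$ is smooth and strictly convex, the graph $\{(\vx_0,\nabla\psi_0(\vx_0))\}$ is a Lagrangian submanifold projecting diffeomorphically onto $\vx$-space at $t=0$; the spatial Jacobian $\pt\vx(t)/\pt\vx_0$ equals the identity at $t=0$ and varies continuously, so the projection remains a diffeomorphism on some interval $[0,T]$ (no caustics form). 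Inverting $\vx_0\mapsto\vx(t)$ produces a classical solution $\psi(\cdot,t)$ on $[0,T]$. To keep strict convexity, I would track $M(t):=\nabla^2\psi(\vx(t),t)$, which solves a matrix Riccati equation with continuous right-hand side; because $M(0)=\nabla^2\psi_0(\vx_0)\succ0$ and the coefficients (the degenerate $\nabla^2_{pp}H\succeq0$ and the indefinite $\nabla^2_{xx}H$) are bounded, $M(t)\succ0$ persists for short time. This is precisely why the strictly convex classical solution is claimed only locally in time.

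For part (ii), I would invoke the standard Lax--Oleinik/optimal-control representation. Lemma \ref{Hconvex} gives convexity of $H$ in $\vp$ on $G$ and the form \eqref{H} gives exponential coercivity, while Lemma \ref{lem:least} records that the dual $L$ is nonnegative, strictly convex on $G$, and $+\8$ off $G$; together these place \eqref{HJE2psi} in the setting where the value function \eqref{LO} is the unique continuous viscosity solution with datum $\psi_0$ (cf. \cite{Tran21}, together with the semigroup convergence established in \cite{GL_vis}). By uniqueness of viscosity solutions, this value function coincides on $[0,T]$ with the classical solution of part (i) and extends it globally in time.

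For part (iii), the key is the special structure \eqref{Hf}. Taking $\vp\equiv\vec{0}$ in \eqref{HM}, property \eqref{Hf} gives $\dot{\vp}=-\nabla_x H(\vec{0},\vx)=\vec{0}$, which is consistent, while $\dot{\vx}=\nabla_p H(\vec{0},\vx)=\sum_j\vec{\nu}_j(\Phi^+_j(\vx)-\Phi^-_j(\vx))$ is exactly RRE \eqref{odex}. Along this zero-momentum curve $L(\dot{\vx},\vx)=\vec{0}\cdot\dot{\vx}-H(\vec{0},\vx)=0$, so using $\vx(\cdot)$ as a competitor in \eqref{LO} with $\vx_0=\argmin_{\vx}\psi_0$ gives $\psi(\vx(t),t)\le\psi_0(\vx_0)=0$. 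Conversely $L\ge0$ and $\psi_0\ge0$ force $\psi(\vx,t)\ge0$ everywhere, hence $\psi(\vx(t),t)=0=\min_{\vx}\psi(\vx,t)$; strict convexity from part (i) makes this minimizer unique, whence $\vx^*(t)=\argmin_{\vx}\psi(\vx,t)$. Finally the WKB ansatz \eqref{1.3} with a unique nondegenerate minimizer yields, by Laplace's method, concentration of $p(\cdot,t)$ at $\vx^*(t)$ and thus the weak law of large numbers $\lim_{V\to+\8}\bE(\varphi(C^V_t))=\varphi(\vx^*(t))$, in agreement with Kurtz's \eqref{lln}. The main obstacle is the preservation of strict convexity: because the monomial coefficients $\Phi^\pm_j$ make $H$ neither convex nor concave in $\vx$ and $H$ is degenerate along $\kk(\nu)$, convexity need not survive globally and caustics may form, so the delicate steps are the short-time Riccati/no-caustic argument in part (i) and its gluing to the global Lax--Oleinik solution via viscosity uniqueness in part (ii); the computations in part (iii) then follow straightforwardly from \eqref{Hf} and the nonnegativity of $L$.
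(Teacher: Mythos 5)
Your proposal is correct and follows essentially the same route as the paper's proof: characteristics with initial momentum $\nabla\psi_0(\vx_0)$ for the local classical solution in (i), the Lax--Oleinik/dynamic-programming representation (citing \cite{Tran21, GL_vis}) for (ii), and the invariant zero-momentum characteristic $\vp\equiv\vec{0}$ from \eqref{Hf} together with the Laplace principle applied to $p=e^{-V\psi}$ for (iii). The only differences are that you supply somewhat more detail than the paper at two sub-steps --- a Riccati argument for short-time persistence of strict convexity, and the identification $\vx^*(t)=\argmin_{\vx}\psi(\vx,t)$ via the variational comparison $L\geq 0$, $\psi_0\geq 0$ against the zero-action competitor, where the paper instead uses $\psi(\vx(t),t)\equiv 0$ along the characteristic with $\nabla\psi(\vx(t),t)=\vp(t)=\vec{0}$ and convexity --- both of which are sound and equivalent in substance.
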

{\blue We remark the rigorous proof for the large deviation principle of process $\cv$ shall be done via the convergence of the WKB reformulation for backward equation, i.e., the Varahdan's nonlinear semigroup, to the viscosity solution to the corresponding HJE \eqref{HJEu} \cite{GL_vis}. Then the concentration of measure gives the mean field limit equation $\frac{\ud}{\ud t}\vec{x} = \nabla_p H(\vec{0}, \vec{x})$.  However, in this proposition, we use the WKB reformulation for the forward equation because the   forward equation is more intuitive for computing the probability.  Then the formal convergence from this WKB reformulation for the forward equation  to the Lax-Oleinik semigroup representation of the viscosity solution to \eqref{HJE2psi} yields the mean field limit equation.}
\begin{proof}
Step 1. Using the definition of the Hamiltonian in \eqref{H}, we solve the following HJE by the characteristic method
\begin{equation}
\pt_t \psi  + H(\nabla \psi , \vec{x})=0, \quad \psi(\vec{x}, 0)=\psi_0(\vec{x}).
\end{equation}
Then constructing the characteristics $\vx(t), \vp(t)$ 
\begin{equation}\label{cc}
\begin{aligned}
\dot{\vec{x}} = \nabla_p H(\vec{p}, \vec{x}), \quad \vec{x}(0) = \vec{x}_0,\\
\dot{\vec{p}} = -\nabla_x H(\vec{p}, \vec{x}), \quad \vec{p}(0) = \nabla \psi_0(\vec{x}_0).
\end{aligned}
\end{equation}
From the assumptions on $\psi_0$, we know there exists $T$ such that the characteristics $\vx_1(t)$, $\vx_2(t)$ starting from any initial data $(\vx_1(0), \vp_1(0))$, $(\vx_2(0), \vp_2(0))$ do not intersect. Thus  upto $t\in[0,T]$, $\vx(t),\vp(t)$ can be uniquely solved from \eqref{cc}.   For any $t\in[0,T]$, we also know $ \psi(\vx,t)$ is convex.  Then along characteristics,  with $\vec{p}(t) = \nabla_x\psi( \vec{x}(t), t )$,  we know $z(t)=\psi(\vx(t),t)$ satisfies
\begin{equation}
\begin{aligned}
\dot{z} =\nabla_x \psi(\vx(t),t) \cdot \dot{\vx} + \pt_t \psi(\vx(t),t) =\vec{p} \cdot \nabla_p H(\vec{p},\vec{x}) -H(\vec{p}, \vec{x}), \quad z(0) = \psi_0(\vec{x}_0).
\end{aligned}
\end{equation} 
Hence we can solve for $z(t)=\psi(\vx(t),t).$
Then we know along the characteristic
\begin{equation}
\begin{aligned}
\frac{\ud}{\ud t} H( \vec{x}(t), \vec{p}(t)) = \dot{\vx} \cdot \nabla_x H + \dot{\vp} \cdot \nabla_p H = 0,\\
\dot{z} - \vp(t)\cdot \nabla_p H(\vec{p}, \vec{x}) = - H(\vec{p},\vec{x}) = - H(\vec{p}_0, \vec{x}_0).
\end{aligned}
\end{equation}
 The Lax-Oleinik formula for the global viscosity solution in conclusion (ii) is a direct application of the dynamic program principle (semigroup property of \eqref{LO}); see \cite[Theorem 2.22]{Tran21}.

Step 2. Particularly, taking $ \vec{x}_0$ as the minimizer of $\psi_0$ such that $\nabla_x \psi_0(\vec{x}_0)=\vec{0}$ and thus $\vec{p}(0) = \vec{0}.$ Then from \eqref{Hf}, we have 
\begin{equation}
\vp(t) \equiv \vec{0}, \quad \frac{\ud}{\ud t} \psi(\vec{x}(t), t)=\dot{z} \equiv -H(\vp_0,\vx_0)= 0
\end{equation}
and we obtain ODE
\begin{equation}
\frac{\ud}{\ud t}\vec{x}(t) = \nabla_p H(\vec{0}, \vec{x}(t)).
\end{equation}
\begin{equation}\label{minXX}
\vec{x}(t) = \text{argmin}_{\vec{x}} \, \psi(\vec{x},t).
\end{equation}
 
  Now we prove the trajectory in  \eqref{minXX} is the mean path in the sense of a weak formulation of the law of large numbers. 
 Recall the WKB expansion for the law of large number process $C^V$, i.e., $p(\vxv, t)=e^{-V \psi(\vxv, t)}$ for any $t>0$. Then for any test function $\varphi(\vx)$, the expectation of $\varphi$ satisfies
 \begin{equation}
 \bE(\varphi(C^V_t))=   \sum_{\vxv} \varphi(\vxv) p(\vxv, t)  = \frac {  \sum_{\vxv} \varphi(\vxv)  e^{-V \psi(\vxv,t)}}{ \sum_{\vxv} e^{-V \psi(\vxv,t)}}, \quad t>0.
 \end{equation}
 Then by the Laplace principle, we have for any  $t\in[0,T]$,
 \begin{equation}\label{Claw}
 \lim_{V\to+\8} \bE (\varphi(C^V_t)) = \varphi(\vx^*(t)), \quad \vx^*(t) = \argmin_{\vx} \psi(\vx, t).
 \end{equation}
 In other words, the process time marginal $C^V(t)$ converges to $\vx^*(t)$ in law for any $t$.

\end{proof}

\subsection{Conservative-dissipative decomposition for the macroscopic RRE}\label{sec_3decom}
 In this section, we study finer properties of the macroscopic RRE based on   any smooth enough stationary solutions  $\psi^{ss}(\vx)$ to HJE.
 We will first decompose the RRE as a conservative part and a dissipative part in Theorem \ref{thm_decom}. Then using another conservation law for the total mass in special chemical reactions, we explore the GENERIC formalism and bi-anti-symmetric structures in the decomposition.
 
\begin{thm}\label{thm_decom}
Let $H(\vp,\vx)$ be a convex Hamiltonian satisfying $H(\vec{0},\vx)=0$. Assume  $\psi^{ss}(\vx)$ is a steady solution to the corresponding HJE satisfying $H(\nabla \psi^{ss}(\vx), \vx)=0$. Then   we have the following conservative-dissipative decomposition for the macroscopic RRE $\frac{\ud}{\ud t}\vec{x} = \nabla_p H(\vec{0}, \vec{x})$
\begin{equation}\label{odeDD}
\begin{aligned}
&\dot{\vx} =W(\vx) \,\, - \,  K(\vx) \nabla \psi^{ss}(\vx),\\
&W(\vx):=\int_0^1 \nabla_p H(\theta \nabla \psi^{ss}(\vx),\vx) \ud \theta, \quad K(\vx):=   \int_0^1 (1-\theta) \nabla^2_{pp} H(\theta \nabla \psi^{ss}(\vx), \vx) \ud \theta.
\end{aligned} 
\end{equation}
(i) $W(\vx)$ is the conservative part satisfying
\begin{equation}
\la W(\vx),\nabla \psi^{ss}(\vx) \ra =0.
\end{equation}
Thus  we recast conservative part as $W(\vx)=\mathcal{A}(\vx) \nabla \psi^{ss}(\vx)$, where $\mathcal{A}(\vx) :=\frac{W\otimes \nabla \psi^{ss}(\vx) - \nabla \psi^{ss}(\vx)\otimes W}{|\nabla \psi^{ss}(\vx)|^2}$ 
is an anti-symmetric operator.
 \\
 (ii) $-K(\vx)\nabla \psi^{ss}(\vx)$ is the dissipative part with a nonnegative definite operator $K(\vx)$. 
Thus  any increasing function $\phi(\cdot)$ of $\psi^{ss}(\vx)$   is a Lyapunov function for the  ODE with  energy dissipation relation
\begin{equation}\label{Ly_n}
\frac{\ud}{\ud t} \phi(\psi^{ss}(\vec{x})) =\big\la  \dot{\vec{x}} \, , \,  \phi'(\psi^{ss}(\vx))\nabla \psi^{ss}(\vx) \big\ra=-\big\la \phi'(\psi^{ss}(\vx))K(\vx)   \nabla  \psi^{ss}(\vx)   ,\,  \nabla \psi^{ss}(\vx)\big\ra\leq 0.
\end{equation} 
\end{thm}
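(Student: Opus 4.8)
The plan is to derive the entire decomposition from a single Taylor expansion of the map $\theta\mapsto\nabla_pH(\theta\nabla\psi^{ss}(\vx),\vx)$ along the segment $\vp\in[\vec{0},\nabla\psi^{ss}(\vx)]$, and then to read off (i) and (ii) from the two boundary values of $H$ on that segment together with the assumed convexity of $H$. Concretely, I would first establish \eqref{odeDD} itself. Writing $\Psi(\theta):=\nabla_pH(\theta\nabla\psi^{ss}(\vx),\vx)$, the fundamental theorem of calculus gives $\Psi(\theta)=\Psi(0)+\int_0^\theta\nabla^2_{pp}H(\sigma\nabla\psi^{ss},\vx)\nabla\psi^{ss}\ud\sigma$. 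Integrating this over $\theta\in[0,1]$ and exchanging the order of integration by Fubini, using $\int_0^1\!\int_0^\theta\ud\sigma\ud\theta=\int_0^1(1-\sigma)\ud\sigma$ on the integrand, turns $\int_0^1\Psi(\theta)\ud\theta=W(\vx)$ into $W(\vx)=\Psi(0)+K(\vx)\nabla\psi^{ss}(\vx)$. Since $\Psi(0)=\nabla_pH(\vec{0},\vx)=\dot{\vx}$, this is exactly $\dot{\vx}=W(\vx)-K(\vx)\nabla\psi^{ss}(\vx)$, and the exchange step is precisely what produces the weight $1-\theta$ in the definition of $K$.

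For (i) I would exploit both endpoint conditions on $H$. Pairing $W(\vx)$ with $\nabla\psi^{ss}(\vx)$ and recognizing the integrand as a total derivative,
\begin{equation}
\la W(\vx),\nabla\psi^{ss}(\vx)\ra=\int_0^1\nabla_pH(\theta\nabla\psi^{ss},\vx)\cdot\nabla\psi^{ss}\ud\theta=\int_0^1\frac{\ud}{\ud\theta}H(\theta\nabla\psi^{ss}(\vx),\vx)\ud\theta=H(\nabla\psi^{ss},\vx)-H(\vec{0},\vx)=0,
\end{equation}
where I use the hypothesis $H(\vec{0},\vx)=0$ and the stationary HJE $H(\nabla\psi^{ss},\vx)=0$. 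The anti-symmetric recasting is then a one-line algebraic check: given $\la W,\nabla\psi^{ss}\ra=0$, one verifies $\mathcal{A}(\vx)\nabla\psi^{ss}=W$ directly from the rank-one tensor formula, while $\mathcal{A}^T=-\mathcal{A}$ is immediate from antisymmetry of the difference $W\otimes\nabla\psi^{ss}-\nabla\psi^{ss}\otimes W$. This part tacitly requires $\nabla\psi^{ss}(\vx)\neq\vec{0}$, i.e. that we are away from steady states where the normalization $|\nabla\psi^{ss}|^2$ is nonzero.

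For (ii), convexity of $H$ in $\vp$ (the standing hypothesis, which for the chemical Hamiltonian \eqref{H} is Lemma \ref{Hconvex}) gives that $\nabla^2_{pp}H(\theta\nabla\psi^{ss},\vx)$ is nonnegative definite for each $\theta$; since the weight $1-\theta\ge0$ on $[0,1]$, the matrix $K(\vx)$ is a nonnegative combination of nonnegative-definite matrices and is therefore nonnegative definite. The dissipation relation \eqref{Ly_n} then follows by the chain rule, $\frac{\ud}{\ud t}\phi(\psi^{ss}(\vx))=\phi'(\psi^{ss})\la\nabla\psi^{ss},\dot{\vx}\ra=\phi'(\psi^{ss})\la\nabla\psi^{ss},W-K\nabla\psi^{ss}\ra$, where the $W$-term vanishes by (i), leaving $-\phi'(\psi^{ss})\la K\nabla\psi^{ss},\nabla\psi^{ss}\ra\le0$ for any increasing $\phi$.

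Since the weights defining $W$ and $K$ are forced once one Taylor-expands along the segment $[\vec{0},\nabla\psi^{ss}]$, I do not anticipate a genuine analytic obstacle; the content is the observation that the orthogonality $\la W,\nabla\psi^{ss}\ra=0$ is precisely the consumption of the two boundary conditions $H(\vec{0},\vx)=0$ and $H(\nabla\psi^{ss},\vx)=0$. The only points needing care are the Fubini exchange that generates the $(1-\theta)$ weight (valid since $\nabla^2_{pp}H$ is continuous in $\theta$), the fact that $\nabla^2_{pp}H$ is only positive \emph{semi}definite—degenerate on $\kk(\nu)$ by Lemma \ref{Hconvex}—which still suffices for $K$ to be nonnegative definite, and the implicit nondegeneracy $\nabla\psi^{ss}\neq\vec{0}$ used solely to make the explicit expression for $\mathcal{A}$ well defined.
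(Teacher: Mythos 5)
Your proposal is correct and follows essentially the same argument as the paper: your FTC-plus-Fubini derivation of the $(1-\theta)$ weight is the same integral-remainder identity the paper writes as $\nabla_p H(\vec{0},\vx)=-\int_0^1 \pt_\theta\bbs{(1-\theta)\nabla_p H(\theta\nabla\psi^{ss},\vx)}\ud\theta$, and your orthogonality computation $\la W,\nabla\psi^{ss}\ra=H(\nabla\psi^{ss},\vx)-H(\vec{0},\vx)=0$ and the positivity argument for $K$ match the paper's proof step for step. Your added remark that the explicit formula for $\mathcal{A}(\vx)$ tacitly requires $\nabla\psi^{ss}(\vx)\neq\vec{0}$ is a fair point left implicit in the paper, but it does not change the substance.
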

\begin{proof}
   Notice $\psi^{ss}(\vx)$ is a steady solution to the HJE satisfying $H(\vec{0},\vx)=H(\nabla \psi^{ss}(\vx), \vx)=0$.

We first recast the right-hand-side of the RRE as
\begin{equation}
\begin{aligned}
\nabla_p H(\vec{0}, \vx)  =& - \int_0^1 \pt_\theta\bbs{(1-\theta) \nabla_p H(\theta \nabla \psi^{ss},\vx)} \ud \theta\\
=& -\int_0^1 (1-\theta) \nabla^2_{pp} H(\theta \nabla \psi^{ss}(\vx),\vx) \ud \theta\nabla \psi^{ss}(\vx)   + \int_0^1 \nabla_p H(\theta \nabla \psi^{ss},\vx) \ud \theta.
\end{aligned}
\end{equation}
This gives the right-hand-side of  \eqref{odeDD} with the definitions $K(\vx), W(\vx)$.
 For the first term in \eqref{odeDD}, taking inner product with $\nabla \psi^{ss}$ gives 
\begin{equation}\label{conser}
\la W(\vx), 
\nabla \psi^{ss}(\vx) \ra = \int_0^1 \pt_\theta H (\theta \nabla \psi^{ss}(\vx), \vx) \ud \theta =H(\nabla \psi^{ss}(\vx),\vx)-H(0,\vx)=0
\end{equation}
due to  $H(\vec{0},\vx)=H(\nabla \psi^{ss}(\vx), \vx)=0$.  
From Lemma \ref{lem_Hdege} and Lemma \ref{Hconvex}, we know $\la \nabla^2_{pp} H(\vp,\vx) \vp, \vp\ra \geq 0$ and strictly positive in $G$, so $K$ is nonnegative definite operator. 

Second, from $\phi'\geq 0$ and the orthogonality \eqref{conser}, we conclude   the energy dissipation \eqref{Ly_n}. Particularly,
\begin{equation}\label{Ly}
\begin{aligned}
\frac{\ud}{\ud t} \psi^{ss}(\vec{x}) =&\la  \dot{\vec{x}} , \, \nabla \psi^{ss} \ra=-\la K(\vx)   \nabla  \psi^{ss}(\vx)   ,\,  \nabla \psi^{ss}(\vx)\ra\leq 0.
\end{aligned}
\end{equation}
\end{proof}
\begin{rem}
{\blue We remark that in the chemical  Langevin approximation, see  for instance \cite{GG00} and \eqref{clr}, the   Lagrangian and Hamiltonian are both quadratic and thus the above decomposition becomes transparent. We point out the choice of the Hamiltonian for the RRE decomposition is not unique, which leads to different interpretations in energetics and kinetics; c.f.,  \cite{Peletier_Redig_Vafayi_2014}. }
\end{rem}
\begin{rem}
Although we have a family of Lyapunov functions $\phi(\psi^{ss}(\vx(t)))$, we will see only the stationary solution $\psi^{ss}$ is the energy landscape of the chemical reactions later in Section \ref{sec_reversal}.
  The energy dissipation \eqref{Ly} can be regarded as the large number limit of the energy dissipation law for the mesoscopic master equation in terms of the natural relative entropy $\rho\log\frac{\rho}{\pi}-\rho+1$; see Proposition \ref{prop_meso_limit} for a passage from mesoscopic to macroscopic in the large number limit.
In other words, the RRE can be decomposed as an Onsager-type strong  gradient flow  
 in the direction of $\nabla \psi^{ss}$,   and a conservative flow  in the orthogonal direction of $\nabla \psi^{ss}$; both with the same free energy $\psi^{ss}(\vx)$. Thus \eqref{odeDD} can be regarded as a conservative dynamics coupling with a dissipation structure. Due to the competition in the chemical reaction represented by Hamiltonian $H$ between conservative force and dissipation in terms of a nonconvex energy landscape $\psi^{ss}(\vx)$,  this system can exhibit complicated dynamic patterns such as limit cycles, oscillations, chaotic attractors and multi-stability, etc.. 
The decomposition \eqref{odeDD} is  a ``pre-GENERIC'' formalism. The concept   ``pre-GENERIC'' was proposed by \textsc{Kraaij} et.al. \cite{Kraaij_Lazarescu_Maes_Peletier_2020} which replaces the Hamiltonian part $\mathcal{A}\nabla E$ in the original GENERIC formalism by  a general orthogonal term $W$ such that $\la W, \nabla\psi^{ss} \ra 
=0$.
\end{rem}
 
Below, we explore further the GENERIC formalism and   two anti-symmetric structures for RRE by  utilizing the additional mass conservation law for chemical reaction, i.e., 
 for any $\vm\in \kk(\nu)$,  we have $\frac{\ud }{\ud t} \vx(t) \cdot \vec{m} =0$.
 
 \subsubsection{GENERIC formalism for RRE}
 Denote the conservative part in the decomposition \eqref{odeDD} as
\begin{equation}
W(\vx):= \int_0^1 \nabla_p H(\theta \nabla \psi^{ss}(\vx),\vx) \ud \theta.
\end{equation}

 For any $\vm \in \kk(\nu)$, we use the conserved mass as the role of the conserved energy functional in GENERIC  formalism
 \begin{equation}
 E(\vx) := \vm \cdot \vx.
 \end{equation}
 Since $\nabla_p H(\vp, \vx) = \sum_j \vec{\nu}_j \bbs{\Phi_j^+ e^{\vec{\nu}_j \cdot \vp} - \Phi_j^- e^{-\vec{\nu}_j \cdot \vp} } \in G$,  the conservative part $W$ is orthogonal to $\nabla E = \vm$
\begin{equation}\label{conser_m}
\la W(\vx), \vm \ra = 0, \quad \forall \vm\in \kk(\nu).
\end{equation} 
Therefore, $W$ can be recast as
\begin{equation}\label{Wr1}
W(\vx) = \frac{(W\otimes \vm - \vm \otimes W)\vm}{|\vm|^2} =: \mathcal{A}_1(\vx) \nabla E,
\end{equation}
where $\mathcal{A}_1(\vx):= \frac{(W\otimes \vm - \vm \otimes W)}{|\vm|^2}$ is an anti-symmetric matrix satisfying
\begin{equation}
\mathcal{A}_1(\vx)\nabla \psi^{ss}(\vx)=\vec{0}.
\end{equation}
Here we remark that since $H$ is degenerate in $\kk(\nu)$, without loss of generality, $\psi^{ss}(\vx)$ can be chosen within $G$ or we only require $\la W(\vx),\nabla \psi^{ss}(\vx) \ra=0.$

On the other hand, by Lemma \ref{lem_Hdege}, we know $\vm^T K(\vx)\vm = 0$. Since $K(\vx)$ is nonnegative definite, we have
\begin{equation}
K(\vx) \nabla E=0.
\end{equation}
In summary, for special Hamiltonian $H$ in chemical reactions,  the decomposition \eqref{odeDD} for RRE can be recast as GENERIC formalism
\begin{equation}\label{decom_gen}
\dot{\vx} = \mathcal{A}_1(\vx)\nabla E(\vx) - K(\vx) \nabla \psi^{ss}(\vx).
\end{equation}

\subsubsection{Two anti-symmetric  structures for the conservative part}
Apart from the full decomposition, we study  two anti-symmetric structures of the conservative part $W(\vx)=\int_0^1 \nabla_p H(\theta \nabla \psi^{ss}(\vx),\vx) \ud \theta$ in \eqref{odeDD}  for RRE.

First, we observe
  there are two conservation laws for $W$: one is \eqref{conser_m} and the other one is  
\begin{equation}
\la W(\vx), \nabla \psi^{ss}(\vx) \ra =0
\end{equation}
due to \eqref{conser}.
 Following the same idea for constructing the anti-symmetric operator in \eqref{Wr1}, given any conservation laws $\la W, \vec{a} \ra =0$, $W(\vx)$ can always be recast as anti-symmetric matrix $\mathcal{A}$ using
 \begin{equation}
 W = \frac{\bbs{W\otimes \vec{a} - \vec{a}\otimes W}\vec{a}}{|\vec{a}|^2}=:\mathcal{A}(\vx) \vec{a}.
 \end{equation}
 Particularly, using the formula for $H$ in \eqref{H},
 \begin{equation}\label{Wr2}
 \begin{aligned}
 W(\vx) =& \sum_j \vec{\nu}_j \bbs{ \Phi^+_j \int_0^1 e^{\theta \vec{\nu}_j \cdot \nabla \psi^{ss} } \ud \theta -  \Phi^-_j \int_0^1 e^{-\theta \vec{\nu}_j \cdot \nabla \psi^{ss} } \ud \theta }\\
 =& \sum_j  \bbs{ \Phi^+_j \bbs{{ e^{\vec{\nu_j} \cdot \nabla \psi^{ss} }}   -  1 } +  \Phi^-_j \bbs{{ e^{-\vec{\nu_j} \cdot \nabla \psi^{ss} } }  - 1  }} \frac{\bbs{\vec{\nu}_j \otimes \nabla\psi^{ss}}\nabla\psi^{ss}}{ \vec{\nu}_j \cdot \nabla\psi^{ss} |\nabla \psi^{ss}|^2}\\
 =& \sum_j  \bbs{ \Phi^+_j \bbs{{ e^{\vec{\nu_j} \cdot \nabla \psi^{ss} }}   -  1 } +  \Phi^-_j \bbs{{ e^{-\vec{\nu_j} \cdot \nabla \psi^{ss} } }  - 1  }} \frac{\bbs{\vec{\nu}_j \otimes \nabla\psi^{ss} -\nabla\psi^{ss}\otimes \vec{\nu}_j }}{ \vec{\nu}_j \cdot \nabla\psi^{ss} |\nabla \psi^{ss}|^2}\nabla\psi^{ss}=:\mathcal{A}_2(\vx)\nabla\psi^{ss}.
 \end{aligned}
 \end{equation}
 That is to say, for chemical reactions with $H$ in \eqref{H}, the decomposition \eqref{odeDD} can be recast as 
\begin{equation}\label{RREdecom}
\frac{\ud}{\ud t}\vec{x} = \nabla_p H(\vec{0}, \vec{x}) = \sum_{j=1}^M \vec{\nu}_j \bbs{\Phi^+_j(\vec{x}) - \Phi^-_j(\vec{x})}=: \mathcal{A}_2(\vx) \nabla \psi^{ss}(\vx) -K(\vx)\nabla \psi^{ss}(\vx)
\end{equation}
with explicit formulas 
\begin{equation}\label{K12}
\begin{aligned}
K(\vx):= \sum_{j=1}^M     \bbs{ \Phi^+_j \bbs{{ e^{\vec{\nu_j} \cdot \nabla \psi^{ss} }}   -  1 - \vec{\nu_j} \cdot \nabla \psi^{ss}  } +  \Phi^-_j \bbs{{ e^{-\vec{\nu_j} \cdot \nabla \psi^{ss} } }  - 1 + \vec{\nu_j} \cdot \nabla \psi^{ss} }}\frac{\vec{\nu}_j\otimes \vec{\nu}_j}{|\vec{\nu_j} \cdot \nabla \psi^{ss}|^2}\\
\mathcal{A}_2(\vx):= \sum_{j=1}^M     \bbs{ \Phi^+_j \bbs{{ e^{\vec{\nu_j} \cdot \nabla \psi^{ss} }}   -  1 } +  \Phi^-_j \bbs{{ e^{-\vec{\nu_j} \cdot \nabla \psi^{ss} } }  - 1  }}\frac{\bbs{\vec{\nu}_j \otimes \nabla\psi^{ss} -\nabla\psi^{ss}\otimes \vec{\nu}_j }}{ \vec{\nu}_j \cdot \nabla\psi^{ss} |\nabla \psi^{ss}|^2}.
\end{aligned}
\end{equation}
It is easy to verify the positive symmetry of $K$, the anti-symmetry of $\mathcal{A}$ and
$$\la K(\vx) \nabla \psi^{ss}(\vx), \nabla \psi^{ss}(\vx) \ra \geq 0, \quad \la \mathcal{A}_2(\vx) \nabla \psi^{ss}(\vx), \nabla \psi^{ss}(\vx)   \ra=0.$$
We point out the decomposition \eqref{RREdecom} is  analogous to the  Landau-Lifshitz equation instead of the GENERIC formalism \eqref{decom_gen}.

In summary, combining \eqref{Wr1} and \eqref{Wr2},  two conservation laws leads to two anti-symmetric structures for the conservative part $W(\vx)$
\begin{equation}
W(\vx) =\mathcal{A}_1(\vx)\nabla E(\vx)= \mathcal{A}_2(\vx)\nabla\psi^{ss}(\vx).
\end{equation}

 With the above  characterization for relations between solutions to HJE \eqref{HJE2psi} and  RRE trajectory in Proposition \ref{prop1}, we summarize the corresponding relations and the decompositions for RRE in the following Corollary. 
\begin{cor}
For    Hamiltonian $H(\vp,\vx)$ in \eqref{H} for a chemical reaction, the minimizer of the dynamic solution $\psi(\vx,t)$ gives the unique solution $\vx(t)$ to the RRE \eqref{odex} with initial data $\vx_0=\min_{\vx} \psi_0$. The steady solution $\psi^{ss}(\vx)$ gives a Lyapunov function to RRE \eqref{odex}. The   GENERIC formalism for RRE reads as \eqref{decom_gen}.  Another decomposition for RRE reads as \eqref{RREdecom} with   $K$ and $\mathcal{A}_2$ defined in \eqref{K12}. 
\end{cor}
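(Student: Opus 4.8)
The plan is to verify that the concrete chemical-reaction Hamiltonian $H(\vp,\vx)$ of \eqref{H} satisfies all the structural hypotheses of the general results proved earlier, and then to invoke those results assertion by assertion. Each of the four claims of the corollary has already been established in an abstract or computational form above, so the work is purely one of specialization and bookkeeping rather than new analysis.

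First I would record the two elementary properties of $H$ that drive everything. From \eqref{Hf} we have $H(\vec{0},\vx)\equiv 0$, and differentiating \eqref{H} in $\vp$ gives $\nabla_p H(\vp,\vx) = \sum_j \vec{\nu}_j\bbs{\Phi^+_j e^{\vec{\nu}_j\cdot\vp} - \Phi^-_j e^{-\vec{\nu}_j\cdot\vp}}$, so that $\nabla_p H(\vec{0},\vx) = \sum_j \vec{\nu}_j\bbs{\Phi^+_j(\vx) - \Phi^-_j(\vx)}$ coincides exactly with the right-hand side of the RRE \eqref{odex}. Together with the convexity of $H$ in $\vp$ from Lemma \ref{Hconvex}, this is precisely the hypothesis set of Proposition \ref{prop1}. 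Applying Proposition \ref{prop1}(iii) to this $H$ with the stated strictly convex initial datum then yields that $\argmin_{\vx}\psi(\vx,t)$ solves $\dot{\vx} = \nabla_p H(\vec{0},\vx)$, which by the computation above is the unique solution to RRE \eqref{odex} starting from $\vx_0 = \argmin_{\vx}\psi_0$; this is the first assertion.

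For the second assertion, any smooth steady solution $\psi^{ss}$ satisfies $H(\nabla\psi^{ss}(\vx),\vx)=0$, so $H$ and $\psi^{ss}$ meet the hypotheses of Theorem \ref{thm_decom}; part (ii) of that theorem, with $\phi$ the identity, supplies the Lyapunov property via the dissipation relation \eqref{Ly}. The third and fourth assertions are then read off directly from the two computations preceding the corollary: the GENERIC formalism \eqref{decom_gen} follows by using the mass-conservation vector $\vm\in\kk(\nu)$ together with the degeneracy of $H$ on $\kk(\nu)$ (Lemma \ref{lem_Hdege}), which gives both $\la W,\vm \ra =0$ and $K\nabla E=0$; while the decomposition \eqref{RREdecom}, with the explicit $K$ and $\mathcal{A}_2$ of \eqref{K12}, follows by substituting the exponential form of $\nabla_p H$ into the integrals defining $W$ and $K$ in \eqref{odeDD} and carrying out the $\theta$-integration.

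Since every ingredient is already in place, there is no genuine analytic obstacle in the corollary itself; the single point deserving care is the existence and smoothness of the steady solution $\psi^{ss}$, which is an assumption inherited from Theorem \ref{thm_decom} rather than something proved here. Its rigorous construction via the optimal-control/weak-KAM representation at an undetermined time horizon is deferred to Section \ref{sec_control}. Accordingly I would phrase the statement as contingent on the existence of such a $\psi^{ss}$, and simply note that the four claims are the specializations of Proposition \ref{prop1}, Theorem \ref{thm_decom}, the identity \eqref{decom_gen}, and the identity \eqref{RREdecom} to the Hamiltonian \eqref{H}.
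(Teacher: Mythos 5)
Your proposal is correct and follows exactly the route the paper intends: the corollary is a pure specialization, assembled by checking that $H$ in \eqref{H} satisfies \eqref{Hf} and the convexity hypotheses, then citing Proposition \ref{prop1}(iii), Theorem \ref{thm_decom}, and the explicit computations \eqref{decom_gen} and \eqref{Wr2}--\eqref{K12}, with the $\theta$-integration carried out as in the text. Your caveat that the existence and smoothness of $\psi^{ss}$ is an inherited assumption, deferred to the weak KAM construction of Section \ref{sec_control}, matches the paper's own treatment.
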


As an example, we show that if   RRE satisfies the detailed/complex balance condition, then it is well-known that a closed formula solution for the Lyapunov function is $\psi^{ss}(\vx)=\KL(\vx||\peq);$ c.f. \cite{sontag2001structure}.
In the RRE detailed balanced case,
using \eqref{tempPhi},
 we have dissipation relation
$
\frac{\ud}{\ud t} \KL(\vec{x}(t)||\peq) = \sum_i \dot{x}_i \log \frac{x_i}{\xss_i} 
  =- \sum_j \bbs{\Phi^-_j (\vec{x})- \Phi^+_j(\vec{x})}  \log \bbs{\frac{\Phi^-_j(\vec{x})}{\Phi^+_j(\vec{x})}} \leq 0$.
For the complex balance case,
we summarize the following equivalent characterization for complex balance condition \eqref{cDB}. The proof will be given in Appendix \ref{app_lem_cDB}
\begin{lem}\label{lem_cDB}
Assume $\peq>0$ is a positive steady state to RRE \eqref{odex}. Then the following statements are equivalent:
\begin{enumerate}[(i)]
\item $\peq$ satisfies complex balance condition \eqref{cDB};
\item The relative entropy $ \psi^{ss}(\vx)=\KL(\vx||\peq)$  is a steady solution to HJE \eqref{HJE2psi};
\item The product of Poisson distribution $\pi_V(\vxv)=\Pi_{i=1}^N\frac{(V \xss_i)^{V x_i}}{(V x_i)!}e^{-V \xss_i}$  is a positive invariant measure $\pi_V(\vxv)$ to the mesoscopic CME \eqref{rp_eq}.
\end{enumerate}
\end{lem}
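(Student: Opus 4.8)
The plan is to funnel both (ii) and (iii) into the single family of scalar identities that constitutes complex balance. For each complex $\vec\eta\in\mathcal C$ introduce the defect
\[
c_{\vec\eta}:=\sum_{j:\,\vec{\nu}_j^-=\vec\eta}\bbs{\Phi^+_j(\peq)-\Phi^-_j(\peq)}-\sum_{j:\,\vec{\nu}_j^+=\vec\eta}\bbs{\Phi^+_j(\peq)-\Phi^-_j(\peq)},
\]
so that, reading off \eqref{cDB}, condition (i) is precisely ``$c_{\vec\eta}=0$ for every $\vec\eta\in\mathcal C$''. I will prove (i)$\Leftrightarrow$(ii) and (i)$\Leftrightarrow$(iii) by displaying $c_{\vec\eta}$ as the coefficients in two linearly independent expansions: one in monomials of $\vx$, the other in shifted Poisson weights.

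For (i)$\Leftrightarrow$(ii), note that $\nabla\psi^{ss}(\vx)=\log\frac{\vx}{\peq}$, so the macroscopic LMA \eqref{lma} gives the pointwise identities $\Phi^+_j(\vx)e^{\vec{\nu}_j\cdot\nabla\psi^{ss}}=\Phi^+_j(\peq)\bbs{\frac{\vx}{\peq}}^{\vec{\nu}_j^-}$ and $\Phi^-_j(\vx)e^{-\vec{\nu}_j\cdot\nabla\psi^{ss}}=\Phi^-_j(\peq)\bbs{\frac{\vx}{\peq}}^{\vec{\nu}_j^+}$, obtained by absorbing the monomial prefactor of each flux into the exponential; similarly $\Phi^\pm_j(\vx)=\Phi^\pm_j(\peq)\bbs{\frac{\vx}{\peq}}^{\vec{\nu}_j^\pm}$. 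Substituting all four into the definition \eqref{H} of $H$ and collecting the monomials $\bbs{\frac{\vx}{\peq}}^{\vec\eta}$ by complex yields $H(\nabla\psi^{ss}(\vx),\vx)=\sum_{\vec\eta\in\mathcal C}\bbs{\frac{\vx}{\peq}}^{\vec\eta}c_{\vec\eta}$. Since $H(\vec 0,\vx)\equiv0$, the function $\psi^{ss}=\KL(\vx\|\peq)$ solves \eqref{HJE2psi} stationarily, i.e. $H(\nabla\psi^{ss},\vx)\equiv0$, iff this expression vanishes identically on $\{\vx>0\}$; as the monomials indexed by distinct complexes are linearly independent functions there, this is equivalent to $c_{\vec\eta}=0$ for all $\vec\eta$, i.e. to (i).

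For (i)$\Leftrightarrow$(iii), write the CME \eqref{rp_eq} in the integer state $\vec n=V\vxv$, where the jump rates are $\varphi^\pm_j(\vec n)=V\tilde\Phi^\pm_j(\vxv)$ given by \eqref{newR}. The key is the single-reaction identity $\varphi^+_j(\vec n)\pi_V(\vec n)=V\Phi^+_j(\peq)\pi_V(\vec n-\vec{\nu}_j^+)$, together with its backward version $\varphi^-_j(\vec n)\pi_V(\vec n)=V\Phi^-_j(\peq)\pi_V(\vec n-\vec{\nu}_j^-)$, which follow by cancelling the factorials of \eqref{newR} against the Poisson weights. Feeding these into the stationary equation $Q^*_{\V}\pi_V=0$ and shifting indices through $\vec{\nu}_j=\vec{\nu}_j^--\vec{\nu}_j^+$ collapses every term to a shifted weight $\pi_V(\vec n-\vec\eta)$, and regrouping by complex gives $0=\sum_{\vec\eta\in\mathcal C}\pi_V(\vec n-\vec\eta)c_{\vec\eta}$ for every $\vec n$. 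Dividing by $\pi_V(\vec n)>0$ turns the coefficients into $\prod_\ell (n_\ell)_{\eta_\ell}/(V\xss_\ell)^{\eta_\ell}$, products of falling factorials that are linearly independent polynomials in $\vec n$ across distinct $\vec\eta$; hence the stationary equation holds iff $c_{\vec\eta}=0$ for all $\vec\eta$, i.e. iff (i).

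The step needing the most care is the behaviour at the boundary $\partial\bR^N_+$: the single-reaction Poisson identities, and hence the index shifts in the master equation, must hold for \emph{every} $\vec n\ge0$, including states from which some reaction cannot fire. This is exactly where the precise factorial form of the mesoscopic LMA \eqref{newR} is indispensable—rather than the smooth macroscopic flux $\Phi^\pm_j$—since $\frac{n_\ell!}{(n_\ell-\nu^\pm_{j\ell})!}$ vanishes precisely when a reactant count is too small, in lockstep with the convention $\pi_V\equiv0$ on states with a negative component; this is the mesoscopic incarnation of the `no reaction' boundary condition. The two linear-independence claims are then routine once one uses that distinct complexes are distinct multi-indices, and they are the structural facts that convert the analytic statements (ii) and (iii) into the common algebraic condition (i).
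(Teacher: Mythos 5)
Your proposal is correct, and it splits cleanly into one half that mirrors the paper and one half that is genuinely different. For (i)$\Leftrightarrow$(ii) you do essentially what the paper's Appendix \ref{app_lem_cDB} does: rewrite $H(\log\frac{\vx}{\peq},\vx)$ as a sum of monomials $\bbs{\frac{\vx}{\peq}}^{\vec\eta}$ grouped by complex (the paper's \eqref{tm216}--\eqref{tm217}), with your $c_{\vec\eta}$ exactly the bracketed coefficients there; your only addition is making explicit the linear-independence-of-monomials step that justifies the reverse implication (ii)$\Rightarrow$(i), which the paper leaves implicit in the phrase ``is equivalent to.'' For the Poisson statement (iii), however, your route diverges: the paper proves only (i)$\Rightarrow$(iii), and does so by citing the Anderson--Craciun--Kurtz product-form theorem (\cite[Theorem 3.7]{Kurtz15}), then closes the equivalence cycle via (iii)$\Rightarrow$(ii) by a Stirling computation showing $-\frac{1}{V}\log\pi_V \to \KL(\vx||\peq)$ together with the (formal) principle that WKB limits of invariant measures solve the stationary HJE. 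You instead verify $Q^*_V\pi_V=0$ directly in both directions: the shift identities $\varphi^\pm_j(\vec n)\pi_V(\vec n)=V\Phi^\pm_j(\peq)\pi_V(\vec n-\vec\nu^\pm_j)$ collapse the stationary CME to $\sum_{\vec\eta}\pi_V(\vec n-\vec\eta)\,c_{\vec\eta}=0$, and linear independence of the falling-factorial polynomials gives (iii)$\Rightarrow$(i) as well. This is in effect a self-contained reproof of the Anderson--Craciun--Kurtz mechanism, and it buys two things the paper's argument does not: the implication (iii)$\Rightarrow$(i) without passing through the formal WKB step (the weakest link in the paper's chain, since ``Thus (ii) follows'' there presupposes that the large-$V$ limit of an invariant measure is a stationary HJE solution), and a uniform treatment of the `no reaction' boundary--- your observation that $\vec n-\vec\nu_j\not\geq 0$ forces $\vec n-\vec\nu^-_j\not\geq 0$ (since $\vec n-\vec\nu_j=(\vec n-\vec\nu^-_j)+\vec\nu^+_j$ with $\vec\nu^+_j\geq 0$), so the terms omitted by the constrained sums in \eqref{rp_eq} vanish anyway, is exactly the point needed and is correct. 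What the paper's route buys in exchange is brevity and the informative byproduct, used elsewhere in the paper, that the mesoscopic invariant measure $\pi_V$ concentrates precisely on the macroscopic landscape $\KL(\vx||\peq)$.
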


\subsection{ Thermodynamics of chemical reaction systems}\label{subsec_thermo}

Suppose chemical reactions is in a large reservoir that has a constant temperature $T$ and a constant pressure. 
In this section, we devote to study the thermodynamics  for non-equilibrium  chemical reactions. 
For non-equilibrium chemical reactions,  the reaction affinity along  $j$-th reaction pathway is introduced by \textsc{Kondepudi, Prigogine} \cite{kondepudi2014modern}
$$\mathcal{A}_j=k_{\B}T \log  \frac{\Phi_j^+(\vec{x})} {\Phi_j^{-}(\vec{x})},$$
where $k_{\B}$ is the Boltzmann constant.
For the equilibrium reaction, $\mathcal{A}_j$ reduces to the difference of the Gibbs free energy along $j$-th reaction pathway \eqref{tm3.35}, so this is a natural extension from equilibrium reactions.
Then  the total entropy production rate is
\begin{equation}\label{Stot}
T\dot{S}_{\tot}:=\sum_j  \bbs{\Phi_j^+(\vec{x}) -\Phi_j^-(\vec{x})} \mathcal{A}_j  = k_{\B}T\sum_j \bbs{\Phi_j^+(\vec{x}) -\Phi_j^-(\vec{x})}  \log  \frac{\Phi_j^+(\vec{x})} {\Phi_j^{-}(\vec{x})}.
\end{equation}

We will first decompose the total entropy production rate into adiabatic and nonadiabatic contributions. Then we apply it to equilibrium reactions to check the consistency with classical thermodynamic relations based on the Gibbs theory. Particularly, as $t\to+\8$,  at non-equilibrium steady states (NESS), the non-equilibrium dynamics still maintain a positive total entropy production rate.

Recall that the energy landscape $\psi^{ss}$ gives dissipation relation \eqref{Ly} in Proposition \ref{prop1}.
The nonadiabatic entropy production rate representing the dissipation of the energy landscape $\psi^{ss}$ is defined as
  \begin{align*}
  T \dot{S}_{na}:= -k_{\B}T \frac{\ud}{\ud t} \psi^{ss}(\vx(t)) =  k_{\B}T \la K(\vx)   \nabla  \psi^{ss}(\vx),\,  \nabla \psi^{ss}(\vx) \ra\geq 0.
  \end{align*}
  As $t\to+\8$, this nonadiabatic entropy production rate goes to zero. However, as one of the most important features for non-equilibrium reaction, at NESS, the total 
  entropy production rate is positive. Therefore, apart from the nonadiabatic entropy production rate $T \dot{S}_{na}$, the remaining part in $T\dot{S}_{\tot}$ is usually called 
 the adiabatic entropy production rate 
  \begin{equation}\label{WWW}
  \begin{aligned}
   T \dot{S}_{a} =& T\dot{S}_{\tot} - T \dot{S}_{na}\\
   =& k_{\B}T \sum_j \bbs{\Phi^+_j (\vec{x}(t)) - \Phi^-_j(\vec{x}(t))}\log \bbs{ \frac{\Phi^+_j (\vx (t)) }{ \Phi^-_j(\vx (t))}  e^{\vec{\nu}_j \cdot \nabla \psi^{ss}}}\\
 =&  k_{\B}T \sum_j \bbs{ \KL(\Phi^+_j (\vec{x}(t))||\Phi^-_j (\vec{x}(t))e^{-\vec{\nu}_j \cdot \nabla \psi^{ss}}) + \KL(\Phi^-_j (\vec{x}(t))||\Phi^+_j (\vec{x}(t))e^{\vec{\nu}_j \cdot \nabla \psi^{ss}})}\geq 0,
   \end{aligned}
  \end{equation}
where in the last equality, we used $H(\nabla \psi^{ss}(\vx),\vx)=0.$
When $t\to+\8$,  we have
\begin{equation}
T \dot{S}_{a} \to k_{\B}T \sum_j \bbs{\Phi^+_j(\peq) - \Phi^-_j(\peq)}\log\frac{\Phi^+_j(\peq)}{\Phi^-_j(\peq)} 
\end{equation} 
which is strictly positive for $\peq$ being NESS.

In summery, we have
\begin{prop}\label{prop_Stot}
The thermodynamic decomposition for the total entropy production rate \eqref{Stot} of a non-equilibrium chemical reaction is given by
\begin{equation}
\begin{aligned}
T \dot{S}_{\tot}&=T \dot{S}_{na} + T \dot{S}_{a} \geq 0,\\
T \dot{S}_{na}&=  k_{\B}T \la K(\vx)   \nabla  \psi^{ss}(\vx),\,  \nabla \psi^{ss}(\vx)\ra\geq 0,
\\
T \dot{S}_{a}&=k_{\B}T \sum_j \bbs{ \KL(\Phi^+_j (\vec{x}(t))||\Phi^-_j (\vec{x}(t))e^{-\vec{\nu}_j \cdot \nabla \psi^{ss}}) + \KL(\Phi^-_j (\vec{x}(t))||\Phi^+_j (\vec{x}(t))e^{\vec{\nu}_j \cdot \nabla \psi^{ss}})} \geq 0,
\end{aligned}
\end{equation}
where $K(\vx)$ is the nonnegative definite operator in \eqref{K12} and $\psi^{ss}(\vx)$ is the stationary solution to HJE \eqref{HJE2psi}.
\end{prop}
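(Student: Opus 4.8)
The plan is to read the decomposition $T\dot{S}_{\tot}=T\dot{S}_{na}+T\dot{S}_{a}$ as the \emph{definition} $T\dot{S}_{a}:=T\dot{S}_{\tot}-T\dot{S}_{na}$, and then to verify the two asserted closed forms together with their signs; the additivity and the bound $T\dot{S}_{\tot}\ge 0$ are then immediate once each piece is shown nonnegative. First I would dispose of the nonadiabatic part. By its definition $T\dot{S}_{na}=-k_{\B}T\frac{\ud}{\ud t}\psi^{ss}(\vx(t))$, the energy dissipation identity \eqref{Ly} of Theorem \ref{thm_decom} gives at once
\begin{equation*}
T\dot{S}_{na}=k_{\B}T\,\la K(\vx)\nabla\psi^{ss}(\vx),\,\nabla\psi^{ss}(\vx)\ra,
\end{equation*}
and nonnegativity follows from the nonnegative-definiteness of $K(\vx)$ established in Theorem \ref{thm_decom}(ii).

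For the adiabatic part I would compute $\frac{\ud}{\ud t}\psi^{ss}(\vx(t))=\la\dot{\vx},\nabla\psi^{ss}\ra$ and insert the RRE $\dot{\vx}=\sum_j\vec{\nu}_j(\Phi^+_j-\Phi^-_j)$, so that $T\dot{S}_{na}=-k_{\B}T\sum_j(\Phi^+_j-\Phi^-_j)\,\vec{\nu}_j\cdot\nabla\psi^{ss}$. Subtracting this from the total rate \eqref{Stot} produces the intermediate form
\begin{equation*}
T\dot{S}_{a}=k_{\B}T\sum_j(\Phi^+_j-\Phi^-_j)\log\Big(\tfrac{\Phi^+_j}{\Phi^-_j}\,e^{\vec{\nu}_j\cdot\nabla\psi^{ss}}\Big),
\end{equation*}
which is the middle line of \eqref{WWW}.

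The crux is turning this single logarithmic sum into the symmetric pair of relative entropies and reading off its sign. Expanding $\KL(\Phi^+_j||\Phi^-_je^{-\vec{\nu}_j\cdot\nabla\psi^{ss}})$ and $\KL(\Phi^-_j||\Phi^+_je^{\vec{\nu}_j\cdot\nabla\psi^{ss}})$ from the definition $\KL(a||b)=a\ln\frac{a}{b}-a+b$ and adding them, the logarithmic contributions collapse precisely to $(\Phi^+_j-\Phi^-_j)\log(\frac{\Phi^+_j}{\Phi^-_j}e^{\vec{\nu}_j\cdot\nabla\psi^{ss}})$, while the linear remainders accumulate to $\sum_j[\Phi^+_j(e^{\vec{\nu}_j\cdot\nabla\psi^{ss}}-1)+\Phi^-_j(e^{-\vec{\nu}_j\cdot\nabla\psi^{ss}}-1)]$. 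I expect this to be the decisive step, and the only place the hypothesis is genuinely used: that remainder is exactly $H(\nabla\psi^{ss}(\vx),\vx)$ in the form \eqref{H}, and it vanishes because $\psi^{ss}$ solves the stationary equation $H(\nabla\psi^{ss},\vx)=0$. Hence the sum of the two relative entropies equals $\tfrac{1}{k_{\B}T}T\dot{S}_{a}$, which is the claimed representation, and nonnegativity is automatic since $\KL\ge 0$ for nonnegative arguments (with the usual convention for vanishing entries).

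Finally, though not part of the statement proper, the $t\to+\8$ limit in the surrounding discussion follows from the same algebra. At a steady state $\peq$ the dissipation relation \eqref{Ly} forces $\la K(\peq)\nabla\psi^{ss},\nabla\psi^{ss}\ra=0$, whence the strict convexity of $H$ on $G$ (Lemma \ref{Hconvex}) makes the $G=\ran(\nu^T)$ component of $\nabla\psi^{ss}(\peq)$ vanish, so that $\nabla\psi^{ss}(\peq)\in\kk(\nu)$ and $\vec{\nu}_j\cdot\nabla\psi^{ss}(\peq)=0$ for every $j$; the factors $e^{\vec{\nu}_j\cdot\nabla\psi^{ss}}$ then reduce to $1$ and $T\dot{S}_{a}\to k_{\B}T\sum_j(\Phi^+_j(\peq)-\Phi^-_j(\peq))\log\frac{\Phi^+_j(\peq)}{\Phi^-_j(\peq)}$, strictly positive at a genuine NESS.
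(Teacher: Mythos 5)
Your proposal is correct and follows essentially the same route as the paper: define $T\dot{S}_{na}=-k_{\B}T\frac{\ud}{\ud t}\psi^{ss}(\vx(t))$ and read its closed form and sign off the dissipation identity \eqref{Ly}, set $T\dot{S}_{a}:=T\dot{S}_{\tot}-T\dot{S}_{na}$, and convert the resulting logarithmic sum into the symmetric pair of relative entropies using precisely the stationarity $H(\nabla\psi^{ss}(\vx),\vx)=0$ to absorb the linear remainders, exactly as in \eqref{WWW}. Your identification of the exponential remainder with $H(\nabla\psi^{ss},\vx)$ is the same decisive step the paper invokes, so nothing is missing.
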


Now we  review the thermodynamic decomposition for the equilibrium  chemical reactions to see the  above discussion is a natural generalization to non-equilibrium reactions.  
Assume the chemical potential $\mu_i$ satisfies the thermodynamic relation
$$\mu_i = \mu_i^G+k_{\B}T\log x_i, \quad \mu_i^G = \mu^0_i-k_{\B}T \log x^0_i$$
where $\mu_i^G$ is a reference Gibbs free energy (physically called standard-state Gibbs free energy) and $\log x_i - \log x^0_i$ together is a dimensionless quantity.
From LMA \eqref{lma}, we have
\begin{equation}\label{tm3.35}
\log \bbs{ \frac{\Phi_j^-(\vec{x})}{\Phi_j^{-}(\peq)}   \frac{\Phi_j^+(\peq)}{\Phi_j^{+}(\vec{x})} }= \sum_i \nu_{ji}\bbs{ \log x_i -\log x^{s}_i} =  \sum_i \nu_{ji} \frac{\mu_i -\mu_i^{eq}}{k_{\B}T}.
\end{equation}

In the RRE detailed/complex balanced case,
  \textsc{Rao, Esposito} \cite{Rao_Esposito_2016} introduced  a Lyapunov function called  Shear Lyapunov 
function 
$\psi^{ss}(\vx)=  G_{eq} + k_{\B}T \KL(\vec{x}(t)||\peq)$, where $G_{eq}$ is the equilibrium Gibbs free energy with an additional linear combination of conservative quantities. 
Particularly, \cite{Rao_Esposito_2016} also extend this relation to an open chemical reaction network, where  chemostat species interacting with both internal species and environment  
 are included. With the special Shear Lyapunov 
function, the decomposition \eqref{WWW} is reduced to
 \begin{equation}\label{xxx}
\begin{aligned}
T \dot{S}_{a} = & k_{\B}T \sum_j \bbs{\Phi^+_j (\vec{x}(t)) - \Phi^-_j(\vec{x}(t))}\log \bbs{ \frac{\Phi^+_j (\vec{x}(t)) }{ \Phi^-_j(\vec{x}(t))} e^{\vec{\nu}_j \cdot  \log \frac{x_i}{\xss_i} }} \\
= & k_{\B}T \sum_j \bbs{\Phi^+_j (\vec{x}(t)) - \Phi^-_j(\vec{x}(t))}\log \bbs{ \frac{\Phi^+_j (\peq (t)) }{ \Phi^-_j(\peq (t))} }\\
=& T \dot{S}_{\tot} - T \dot{S}_{na} = T \dot{S}_{\tot} + 
k_{\B}T \frac{\ud}{\ud t} \KL(\vec{x}(t)||\peq), 
\end{aligned}
\end{equation}
where the entropy production rate from adiabatic contribution $T\dot{S}_{a}$
represents the  chemical work rate performed by the chemostats when interacting with the environment; see   \cite[eq. (84)]{Rao_Esposito_2016}. 

 We point out for general non-equilibrium reactions, $\psi^{ss}(\vx)$ is an asymptotically effective energy in the large number limit, which is different from the   thermodynamic free energy (Kirkwood potential \cite{kirkwood1935statistical}). On the other hand, we also discuss the passage from the mesoscopic thermodynamics to the macroscopic thermodynamics in Section \ref{sec_meso2} below, where $\psi^{ss}$ can be regarded as the large number limit of the mesoscopic relative entropy.

\subsection{Energy dissipation law and passage from mesoscopic to macroscopic dynamics}\label{sec_meso2}
 
For a general non-equilibrium RRE and the corresponding CME,  we first derive the $\phi$-divergence energy dissipation law  based on the $Q_V$-matrix structure and a Bregman's divergence. This type of $\phi$-divergence energy dissipation law was previously derived by \cite{maas2020modeling} under the RRE detailed balance condition.

 Based on this energy dissipation law for general non-equilibrium reactions,  we take $\phi(\rho)=\rho\log \frac{\rho}{\pi_V}$, then as $V\to +\8$, the corresponding mesoscopic energy dissipation relation converges to the macroscopic energy dissipation relation in terms of the energy landscape $\psi^{ss}(\vx)$. This shows the passage from a mesoscopic convex functional to a macroscopic non-convex function for general non-equilibrium chemical reactions.   We also remark that under the RRE detailed balance condition, \cite{maas2020modeling} rigorously proved the  evolutionary $\Gamma$-convergence in the generalized gradient flow setting for the passage from mesoscopic to macroscopic dynamics. However, in the RRE detailed balance, there is no such a transition from convex functional to a non-convex function; see Remark \ref{rem_nonconvex}.
\begin{prop}\label{prop_meso_limit}
 Assume there exists a positive invariant measure $\pi_V(\vxv)$ and the limit  $\psi^{ss}(\vx):= \lim_{V\to +\8} - \frac{\log \pi_V(\vxv)}{V}$ exists. Then 
 \begin{enumerate}[(i)]
 \item  for any convex function $\phi$, we have the mesoscopic energy dissipation relation 
  \begin{equation}\label{dissi_mic_ge}
\frac{\ud }{\ud t} \sum_{\vxv}  \phi \bbs{\frac{p(\vxv)}{\pi(\vxv)}} \pi(\vxv) = -\sum_{\vxv, \vyv} Q(\vyv,\vxv) \pi(\vyv) D_\phi\bbs{ \frac{p(\vyv)}{\pi(\vyv)}, \frac{p(\vxv)}{\pi(\vxv)}}   \leq 0,
\end{equation}
where $
 D_\phi(y,x):= (y-x)^2\int_0^1 (1-\theta)\phi''(x+\theta(y-x))\ud \theta \geq 0.
$
Particularly, taking $\phi(x)=x\log x - x +1 \geq 0$
 \begin{equation}\label{dissi_mic}
\frac{\ud }{\ud t} \sum_{\vxv} p(\vxv) \log \frac{p(\vxv)}{\pi(\vxv)}  = -\sum_{\vxv,\vyv} Q(\vyv,\vxv) p(\vyv)  \log \frac{p(\vyv) \pi(\vxv)}{\pi(\vyv) p(\vxv)}  \leq 0,
\end{equation}
where $Q$ is the $Q_V$-matrix in \eqref{rp_eq} for the mesoscopic jumping process $C_V$;
\item Formally, as $V\to +\8$, the mesoscopic dissipation law \eqref{dissi_mic} converges to the macroscopic dissipation law \eqref{Ly} in the sense that
\begin{align}\label{meso_limit}
\frac{1}{V}\sum_{\vxv} p(\vxv) \log \frac{p(\vxv)}{\pi(\vxv)} \to  \psi^{ss}(\vec{x}^*),\quad 
 \frac{1}{V}\sum_{\vxv,\vyv} Q(\vyv,\vxv) p(\vyv)  \log \frac{p(\vyv) \pi(\vxv)}{\pi(\vyv) p(\vxv)} \to \la K(\vx^*)   \nabla  \psi^{ss}(\vx^*), \nabla\psi^{ss}(\vx^*) \ra,
\end{align} 
 where $\vx^*(t)=\argmin_{\vx} \psi(\vx,t)$ is the mean path obtained in Proposition \ref{prop1} such that $C^V(t)$ converges in law to $\vx^*(t).$ 
 \end{enumerate}
 \end{prop}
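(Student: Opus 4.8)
The plan is to handle the two parts by entirely different mechanisms: part (i) is an exact finite-$V$ identity proved by algebra on the $Q$-matrix, whereas part (ii) is a formal WKB-plus-concentration computation that transports that identity to the macroscopic scale.

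For part (i), I would set $u:=p/\pi$ and differentiate $F(p)=\sum_{\vxv}\phi(u(\vxv))\pi(\vxv)$ in time, using the CME \eqref{rp_eq} in the form $\dot p(\vxv)=\sum_{\vyv}Q(\vyv,\vxv)p(\vyv)$ to get $\frac{\ud}{\ud t}F=\sum_{\vxv}\phi'(u(\vxv))\dot p(\vxv)$. The only analytic input is the second-order Taylor formula with integral remainder, which is exactly the stated divergence $D_\phi(y,x)=\phi(y)-\phi(x)-\phi'(x)(y-x)=(y-x)^2\int_0^1(1-\theta)\phi''(x+\theta(y-x))\ud\theta$. I would then expand $-\sum_{\vxv,\vyv}Q(\vyv,\vxv)\pi(\vyv)D_\phi(u(\vyv),u(\vxv))$ term by term and use the two structural identities of a $Q$-matrix with invariant measure: the row-sum property $\sum_{\vxv}Q(\vyv,\vxv)=0$ kills the $\phi(u(\vyv))$ contribution, while stationarity $\sum_{\vyv}Q(\vyv,\vxv)\pi(\vyv)=0$ kills both the $\phi(u(\vxv))$ contribution and the $\phi'(u(\vxv))u(\vxv)$ piece; the surviving part $\sum_{\vxv,\vyv}Q(\vyv,\vxv)p(\vyv)\phi'(u(\vxv))$ is precisely $\frac{\ud}{\ud t}F$. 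This proves \eqref{dissi_mic_ge}, and nonpositivity is immediate because $D_\phi\geq 0$ by convexity, $\pi(\vyv)>0$, and the off-diagonal rates satisfy $Q(\vyv,\vxv)\geq 0$ (diagonal terms give $D_\phi(u,u)=0$). Specializing $\phi(x)=x\log x-x+1$, $\phi'(x)=\log x$, and invoking the same two cancellations reduces the general formula to \eqref{dissi_mic}.

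For the entropy in part (ii), I would insert the WKB ansatz \eqref{1.3} $p=e^{-V\psi}$ and the hypothesis $\pi_V=e^{-V\psi^{ss}+o(V)}$, so that $\frac{1}{V}\log\frac{p}{\pi}=\psi^{ss}(\vxv)-\psi(\vxv,t)$ to leading order and $\frac{1}{V}\sum_{\vxv}p(\vxv)\log\frac{p}{\pi}=\sum_{\vxv}p(\vxv)\bbs{\psi^{ss}(\vxv)-\psi(\vxv,t)}$. The weak law of large numbers of Proposition \ref{prop1}, i.e. $C^V(t)$ concentrating at $\vx^*(t)=\argmin_{\vx}\psi(\vx,t)$, lets me pass to the limit $\psi^{ss}(\vx^*)-\psi(\vx^*,t)$, and the second term drops out since $\psi(\vx^*(t),t)\equiv 0$ along the mean path by conclusion (iii) of Proposition \ref{prop1}; this is the first limit in \eqref{meso_limit}. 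For the dissipation term I would reorganize the double sum by reaction channel, using that the only nonzero off-diagonal entries are $Q(\vyv,\vyv\pm\frac{\vec{\nu}_j}{V})=V\tilde{\Phi}^\pm_j(\vyv)$, so the $\frac{1}{V}$ prefactor cancels the rate's $V$ and leaves the $O(1)$ flux. A first-order expansion gives $\log\frac{p(\vyv)\pi(\vyv\pm\frac{\vec{\nu}_j}{V})}{\pi(\vyv)p(\vyv\pm\frac{\vec{\nu}_j}{V})}\to\pm\vec{\nu}_j\cdot\bbs{\nabla\psi(\vyv,t)-\nabla\psi^{ss}(\vyv)}$; concentration at $\vx^*$ together with the minimizer condition $\nabla\psi(\vx^*,t)=0$ collapses this to $\mp\vec{\nu}_j\cdot\nabla\psi^{ss}(\vx^*)$, while $\tilde{\Phi}^\pm_j\to\Phi^\pm_j$. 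The whole expression then tends to $-\sum_j\bbs{\Phi^+_j(\vx^*)-\Phi^-_j(\vx^*)}\vec{\nu}_j\cdot\nabla\psi^{ss}(\vx^*)=-\dot{\vx}^*\cdot\nabla\psi^{ss}(\vx^*)$, and I would identify this with $\la K(\vx^*)\nabla\psi^{ss},\nabla\psi^{ss}\ra$ by evaluating the explicit $K$ from \eqref{K12} and cancelling its exponential terms through $H(\nabla\psi^{ss},\vx)=0$, which leaves exactly $-\frac{\ud}{\ud t}\psi^{ss}$ as in \eqref{Ly}.

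The main obstacle is precisely why part (ii) is asserted only formally: justifying the interchange of $V\to\infty$ with the lattice summations. One must control the subexponential corrections hidden in the WKB ansatz and the discretization error in replacing $\psi^{ss}(\vyv+\frac{\vec{\nu}_j}{V})-\psi^{ss}(\vyv)$ by $\frac{1}{V}\vec{\nu}_j\cdot\nabla\psi^{ss}$ after multiplication by the $O(V)$ rates, and one must apply the Laplace/concentration step uniformly to a $V$-dependent integrand carrying the polynomially growing fluxes $\Phi^\pm_j$. A rigorous treatment would need the viscosity-solution and exponential-tightness machinery of \cite{GL_vis}; at the formal level assumed here, smoothness of $\psi,\psi^{ss}$ and the concentration of Proposition \ref{prop1} make each of the steps above a finite limit computation.
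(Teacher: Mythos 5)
Your proposal is correct and follows essentially the same route as the paper: part (i) is the paper's own computation (your direct expansion of $D_\phi(y,x)=\phi(y)-\phi(x)-\phi'(x)(y-x)$ with the two cancellations $\sum_{\vxv}Q(\vyv,\vxv)=0$ and $\sum_{\vyv}Q(\vyv,\vxv)\pi(\vyv)=0$ is the same algebra the paper organizes via the auxiliary function $x\phi'(x)-\phi(x)$ and the Bregman divergence), and part (ii) matches the paper's formal argument --- WKB substitution $\psi_V=-\frac{1}{V}\log p$, $\psi^{ss}_V=-\frac{1}{V}\log\pi$, first-order expansion of the log ratios into $\vec{\nu}_j\cdot\int_0^1\nabla(\psi_V-\psi^{ss}_V)\ud\theta$, concentration at $\vx^*(t)$ with $\psi(\vx^*,t)=0$, $\nabla\psi(\vx^*,t)=0$, and identification of $-\dot{\vx}^*\cdot\nabla\psi^{ss}$ with $\la K\nabla\psi^{ss},\nabla\psi^{ss}\ra$ through $H(\nabla\psi^{ss},\vx)=0$ and the explicit $K$ of \eqref{K12}. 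Your closing remarks on why (ii) is only formal (interchange of $V\to\infty$ with lattice sums, subexponential WKB corrections, polynomially growing fluxes) accurately describe the gaps the paper likewise leaves at the formal level.
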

 \begin{proof}
 First, recast the master equation for chemical reaction \eqref{rp_eq}  as $\frac{\ud p^T}{\ud t}=p^T Q$.
Since $\sum_{\vxv}Q(\vyv,\vxv)=0$ and $\sum_{\vyv}Q(\vyv,\vxv)\pi(\vyv)=0$, then for any  functions $\phi(x), \psi(x)$, 
\begin{equation}
\begin{aligned}
&\frac{\ud }{\ud t} \sum_{\vxv}  \phi \bbs{\frac{p(\vxv)}{\pi(\vxv)}} \pi(\vxv) = \sum_{\vxv,\vyv}  Q(\vyv,\vxv) p(\vyv) \phi'\bbs{\frac{p(\vxv)}{\pi(\vxv)}} \\
=& \sum_{ \vxv,\vyv} Q(\vyv,\vxv) \pi(\vyv) \frac{p(\vyv)}{\pi(\vyv)} \bbs{\phi' \bbs{\frac{p(\vxv)}{\pi(\vxv)}}-\phi'\bbs{\frac{p(\vyv)}{\pi(\vyv)}}}\\
 =&\sum_{\vxv,\vyv} Q(\vyv,\vxv) \pi(\vyv) \frac{p(\vyv)}{\pi(\vyv)} \bbs{\phi' \bbs{\frac{p(\vxv)}{\pi(\vxv)}}-\phi'\bbs{\frac{p(\vyv)}{\pi(\vyv)}}} -  \sum_{\vxv,\vyv} Q(\vyv,\vxv) \pi(\vyv) \bbs{\psi\bbs{\frac{p(\vxv)}{\pi(\vxv)}} -\psi\bbs{\frac{p(\vyv)}{\pi(\vyv)}} }\\
  =&\sum_{\vxv,\vyv} Q(\vyv,\vxv) \pi(\vyv) \bbs{ \frac{p(\vyv)}{\pi(\vyv)} \bbs{\phi' \bbs{\frac{p(\vxv)}{\pi(\vxv)}}-\phi'\bbs{\frac{p(\vyv)}{\pi(\vyv)}}} -  \bbs{\psi\bbs{\frac{p(\vxv)}{\pi(\vxv)}} -\psi\bbs{\frac{p(\vyv)}{\pi(\vyv)}} } }.
\end{aligned}
\end{equation}
Furthermore, 
take $\psi(x) := x\phi'(x)-\phi(x)$.
Denote $y=\frac{p(\vyv)}{\pi(\vyv)}$ and $x=\frac{p(\vxv)}{\pi(\vxv)}$, then 
the dissipation can be rewritten as a Bregman's divergence
\begin{equation}
y [\phi'(x) - \phi'(y)] - [\psi(x) - \psi(y)] = (y-x)\phi'(x) +\phi(x) - \phi(y)=: - D_\phi(y,x).
\end{equation}
Using the integral form of the reminder in Taylor expansion, 
\begin{equation}
 D_\phi(y,x)= (y-x)^2\int_0^1 (1-\theta)\phi''(x+\theta(y-x))\ud \theta \geq 0.
\end{equation}
This concludes \eqref{dissi_mic_ge}.  
Take $\phi(x)=x\log x - x +1 \geq 0$, 
the dissipation relation becomes \eqref{dissi_mic}.

Second, recall the change of variables in WKB expansion 
\begin{equation}
\psi_V(\vxv,t) = - \frac{\log p(\vxv,t)}{V}, \quad \psi_V^{ss}(\vxv) = - \frac{\log \pi(\vxv)}{V}. 
\end{equation}
Then 
\begin{equation}
\begin{aligned}
\log \frac{p(\vxv-\frac{\vec{\nu}_{j}}{V},t) }{ p(\vxv,t)} = - V \bbs{\psi(\vxv-\frac{\vec{\nu}_{j}}{V},t) - \psi(\vxv)} = \vec{\nu_j} \cdot \int_0^1 \nabla \psi(\vxv -\theta\frac{\vec{\nu}_{j}}{V},t ) \ud \theta.
\end{aligned}
\end{equation}
Using the definition of $Q$-matrix, the dissipation relation \eqref{dissi_mic} reads
\begin{equation}\label{tm_diss}
\begin{aligned}
&\frac{\ud }{\ud t} \sum_{\vxv} p(\vxv,t)  \bbs{\psi_V^{ss}(\vxv) - \psi_V(\vxv,t)} \\
=&   - \sum_{\vxv}\sum_{ j=1}^M \Big[
 \tilde{\Phi}^+_j(\vxv-\frac{\vec{\nu}_{j}}{V})     p(\vxv-\frac{\vec{\nu}_{j}}{V},t)  \vec{\nu_j} \cdot \int_0^1 \nabla \bbs{\psi_V-\psi^{ss}_V}(\vxv -\theta\frac{\vec{\nu}_{j}}{V})  \ud \theta\\  
&
- \tilde{\Phi}^-_j(\vxv+\frac{\vec{\nu}_{j}}{V})     p(\vxv+\frac{\vec{\nu}_{j}}{V},t)  \vec{\nu_j} \cdot \int_0^1 \nabla \bbs{\psi_V-\psi^{ss}_V}(\vxv +\theta\frac{\vec{\nu}_{j}}{V})  \ud \theta  
\Big]
\end{aligned}
\end{equation}
Taking limit $V\to +\8$, from Proposition \ref{prop1}, $C^V(t)$ converges in law to $\vx^*(t)=\argmin \psi(\vx,t)$. Using the fact that $\psi(\vx^*(t),t) = 0$, $\nabla \psi(\vx^*(t),t) = 0$, 
the left-hand-side of \eqref{tm_diss} satisfies
\begin{equation}\label{tm3.47}
\frac{1}{V}\sum_{\vxv} p(\vxv) \log \frac{p(\vxv)}{\pi(\vxv)} = \sum_{\vxv} p(\vxv,t)  \bbs{\psi_V^{ss}(\vxv) - \psi_V(\vxv,t)} \to \psi^{ss}(\vx^*(t)) - \psi(\vx^*(t),t) = \psi^{ss}(\vx^*(t)).
\end{equation}
Similarly, the right-hand-side of \eqref{tm_diss} satisfies
\begin{equation}
\begin{aligned}
 &- \sum_{\vxv}\sum_{ j=1}^M  
 \tilde{\Phi}^+_j(\vxv-\frac{\vec{\nu}_{j}}{V})     p(\vxv-\frac{\vec{\nu}_{j}}{V},t)  \vec{\nu_j} \cdot \int_0^1 \nabla \bbs{\psi_V-\psi^{ss}_V}(\vxv -\theta\frac{\vec{\nu}_{j}}{V})  \ud \theta\\
  \to &  -\sum_{ j=1}^M \Phi^+_j(\vx^*(t)) \vec{\nu}_j\cdot \bbs{\nabla\psi(\vx^*(t),t) - \nabla \psi^{ss}(\vx^*(t))} = \sum_{ j=1}^M \Phi^+_j(\vx^*(t)) \vec{\nu}_j\cdot\nabla \psi^{ss}(\vx^*(t)),
\end{aligned}
\end{equation}
thus
 we arrive at
 \begin{equation}\label{tm3.49}
\begin{aligned}
&-\frac{1}{V}\sum_{\vxv,\vyv} Q(\vyv,\vxv) p(\vyv)  \log \frac{p(\vyv) \pi(\vxv)}{\pi(\vyv) p(\vxv)} \\
& \to  
 \sum_{j=1}^M(\Phi^+_j(\vx^*(t))  - \Phi^-_j(\vx^*(t))) \,  \vec{\nu_j} \cdot  \nabla \psi^{ss}(\vx^*(t)) = -\la K(\vx) \nabla \psi^{ss}(\vx^*(t)), \nabla \psi^{ss}(\vx^*(t)) \ra. 
\end{aligned}
 \end{equation}
Notice the uniqueness of weak convergence. Combining \eqref{dissi_mic}, \eqref{tm3.47} and \eqref{tm3.49}, we conclude $\frac{\ud }{\ud t} \psi^{ss}(\vx^*(t)) = -\la K(\vx) \nabla \psi^{ss}(\vx^*(t)), \nabla \psi^{ss}(\vx^*(t)) \ra$, 
which is exactly the Lyapunov estimate \eqref{Ly} for RRE in Theorem \ref{thm_decom}.
\end{proof}
\begin{rem}\label{rem_nonconvex}
At the mesoscopic level, the energy functional  $F(p)=\sum_{\vxv}  \phi \bbs{\frac{p(\vxv)}{\pi(\vxv)}} \pi(\vxv)$ is also convex w.r.t. $p$. However, since $\phi(u)$ is convex, the nonlinear weight $\pi_{\V}(\vxv)$ in $F(p)$ drastically pick up the complicated non-convex energy landscape for chemical reactions from $\pi_{\V}(\vxv)\approx e^{-V \psi^{ss}(\vx)}$ in the large number limit. Therefore, it is natural that after the concentration of the measure in the large number limit, a non-convex energy landscape emerges. Notice also there is no such a transition from convex functional to a nonconvex function under the RRE detailed balance assumption because the corresponding probability flux is only monomial. {\blue This grouped probability flux including polynomials, which leads to non-convex energy landscape, are only linked to nonequilibrium in the specific context of chemical reactions. For   general equilibrium models in statistical physics, non convex energy landscape is common, for instance the Lagenvin dynamics with non-convex potential and Ising model of ferromagnetism.}    
\end{rem}


\section{Symmetric Hamiltonian: strong gradient flow, reversed least action curve, non-equilibrium enzyme reactions }\label{sec_reversal}
 {\blue  In this section, we explore the   symmetry in the mesoscopic CME and its macroscopic consequences. 
 In Section \ref{subsec_sym}, we first clarify that the Markov chain detailed balance implies a symmetric Hamiltonian; see \eqref{newS}. A proper mathematical Markov chain detailed balance condition for CME is a weaker condition than 
 the more constrained chemical version of   detailed balance, and thus includes a class of non-equilibrium enzyme reactions with three distinguished features:    multiple steady states,   nonzero steady state fluxes and positive entropy production rates at non-equilibrium steady states   \cite{kondepudi2014modern}.

   Then we study in detail two consequences of this symmetry. (I) We show  the conservative part $W(\vx)$ vanishes in the   conservative-dissipation decomposition for RRE \eqref{odeDD}; see Section \ref{sec_gf_quasi}. (II) We prove the 'uphill' least action path is   a modified  time reversed curve corresponding to the RRE, i.e., corresponding to a zero action 'downhill' path, and the associated path affinity is given by  the difference of  $\psi^{ss}$; see Proposition \ref{thm2}. We will call $\psi^{ss}$ as the energy landscape since it is a Lyapunov function of RRE and will eventually give the energy barrier of a transition path. We also provide a mesoscopic interpretation of path affinity; see Section \ref{subsec_reverse}. }
 \subsection{Markov chain detailed balance implies a symmetric Hamiltonian} \label{subsec_sym}
  We first observe  the Markov chain detailed balance condition \eqref{master_db_n} for  the mesoscopic CME is different from the more constraint RRE detailed balance \eqref{DB}. The RRE detailed balance is a very strong symmetric condition that implies the  Markov chain detailed balance condition \eqref{master_db_n}. But the latter one is a proper mathematical definition of detailed balance for a Markov process, which leads to a   symmetric Hamiltonian in a reaction system. 
  
 {\blue Observe the jumping process with generator $Q_V$ only distinct the same reaction vector $\vec{\xi}$ and then the summation in $j$ shall be rearranged in terms of all $j$ such that $\vec{\nu}_j=\pm\vec{\xi}$. Therefore, define 
 the probability flux for the same reaction vector $\vec{\xi}$ as
\begin{equation}\label{group_flux}
\Phi^+_{\xi}(\vx):=\sum_{j: \vec{\nu}_j = \vec{\xi}} \Phi^+_j(\vx) + \sum_{j: \vec{\nu}_j =- \vec{\xi}} \Phi^-_j(\vx), \quad \Phi^-_{\xi}(\vx):= \sum_{j: \vec{\nu}_j =  \vec{\xi}} \Phi^-_j(\vx) + \sum_{j: \vec{\nu}_j =  -\vec{\xi}} \Phi^+_j(\vx).
\end{equation}
}
With the grouped probability flux,  CME \eqref{rp_eq} can be recast as
\begin{equation}\label{CME_nn}
\begin{aligned}
 \frac{\ud }{\ud t} p(\vxv, t)  =  &V\sum_{\xi, \vxv- \frac{\vec{\xi}}{V}\geq 0}    \bbs{\tilde{\Phi}^+_{\xi}(\vxv-\frac{\vec{\xi}}{V})     p(\vxv-\frac{\vec{\xi}}{V},t) - \tilde{\Phi}^-_{\xi}(\vxv)     p(\vxv,t) } \\
  &+    V\sum_{\xi, \vxv+ \frac{\vec{\xi}}{V}\geq 0}  \bbs{    \tilde{\Phi}^-_{\xi}(\vxv+\frac{\vec{\xi}}{V})     p(\vxv+\frac{\vec{\xi}_{j}}{V},t) - \tilde{\Phi}^-_{\xi}(\vxv) p(\vxv,t)},
 \end{aligned}
\end{equation}
where the $\tilde{\Phi}^\pm_{\xi}$ has the same definition as $\Phi^\pm_{\xi}$ in \eqref{group_flux} but replacing $\Phi_j^\pm$ by $\tilde{\Phi}_j^\pm$.
  For any $\vxv$ and $\vec{\xi}$,  the proper mathematical definition for the Markov chain detailed balance for CME means that there exists a positive invariant measure $\pi(\vxv)$ to CME \eqref{rp_eq} such that the total forward probability steady flux from $\vxv$ to $\vxv+\frac{\vec{\xi}}{V}\geq 0$ equals the total backward one
\begin{equation}\label{master_db_n}
\begin{aligned}
\tilde{\Phi}^-_{\xi}(\vxv+\frac{\vec{\xi}}{V})     \pi(\vxv+\frac{\vec{\xi}}{V}) = \tilde{\Phi}^+_{\xi}(\vxv)\pi(\vxv), \quad \forall \vec{\xi}.
\end{aligned}
\end{equation}

On the other hand, a commonly used detailed balance condition  in biochemistry is the more constrained chemical version of detailed balance for each reaction channel, c.f., 
 \cite[Ch7, Lemma 3.1]{Whittle}, \cite[Theorem 4.5]{Anderson_Craciun_Kurtz_2010}, \cite[(7.30)]{qian_book},  
\begin{equation}\label{master_db}
\tilde{\Phi}^-_j(\vxv+\frac{\vec{\nu}_{j}}{V})     \pi(\vxv+\frac{\vec{\nu}_{j}}{V}) = \tilde{\Phi}^+_j(\vxv)\pi(\vxv), \quad \forall j.
\end{equation}
This is also known as  Whittle's Markov chain detailed balance \cite{Joshi}.
It is well known that the mesoscopic Whittle's Markov chain detailed balance \eqref{master_db} is equivalent to the macroscopic RRE detailed balance \eqref{DB}. Indeed, \textsc{Whittle} use the product of Poisson distributions with intensity $V \peq$ to  construct a detailed balanced invariant measure $\pi_V(\vxv)$ satisfying \eqref{master_db}. While from \eqref{master_db}, it is nontrivial to obtain a detailed balance steady state $\peq$;  see  \cite[Ch7, Lemma 3.1]{Whittle}. 

  The  proposition below shows that Markov chain detailed balance condition \eqref{master_db_n} gives raise a symmetry in the Hamiltonian    $H(\vp,\vx)$ (see \eqref{newS})
\begin{equation}
  H(\vp, \vx) = H(\nabla \psi^{ss} (\vx) -\vp, \vx), \quad \forall \vx, \vp.
  \end{equation}
   Taking $\vp=\vec{0}$, we see $\psi^{ss}$ is a steady solution to HJE \eqref{HJE2psi}.
  \begin{prop}\label{prop_mcdb}
Assume there exists a positive invariant measure $\pi_V(\vxv)$ satisfying the Markov chain detailed balance condition \eqref{master_db_n} for mesoscopic effective stochastic process. Assume the macroscopic energy landscape $\psi^{ss}(\vx):= \lim_{V\to +\8} - \frac{\log \pi_V(\vxv)}{V}$ exists, then the Hamiltonian $H(\vp,\vx)$ for macroscopic RRE satisfies the symmetry \eqref{newS} w.r.t. $\psi^{ss}(\vx)$, or equivalently
\begin{equation}\label{iff1}
e^{\vec{\xi}  \cdot \nabla\psi^{ss}(\vx)}  \Phi^+_{\xi}(\vx) = \Phi^-_{\xi}(\vx).  
\end{equation} 
\end{prop}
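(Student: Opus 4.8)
The plan is to pass from the mesoscopic Markov chain detailed balance \eqref{master_db_n} to the pointwise flux identity \eqref{iff1} in the large number limit, and then to verify the symmetry \eqref{newS} directly by a change of variable in the Hamiltonian. First I would rewrite $H$ in terms of the grouped fluxes. Since the summand in \eqref{H} depends on the reaction index $j$ only through $\vec{\nu}_j$, collecting all reactions whose reaction vector is $\pm\vec{\xi}$ and using the definitions \eqref{group_flux} gives, for each distinct reaction vector $\vec{\xi}$ (each unordered pair $\{\vec{\xi},-\vec{\xi}\}$ counted once),
\[
H(\vp,\vx) = \sum_{\vec{\xi}} \bbs{ \Phi^+_{\xi}(\vx)\bbs{e^{\vec{\xi}\cdot\vp}-1} + \Phi^-_{\xi}(\vx)\bbs{e^{-\vec{\xi}\cdot\vp}-1} }.
\]
The crucial structural point is that the desired symmetry will in fact hold term by term in this sum, so it suffices to treat a single reaction-vector class $\vec{\xi}$.

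Next I would derive \eqref{iff1}. Rewriting \eqref{master_db_n} as a ratio,
\[
\frac{\tilde{\Phi}^+_{\xi}(\vxv)}{\tilde{\Phi}^-_{\xi}(\vxv+\frac{\vec{\xi}}{V})} = \frac{\pi_V(\vxv+\frac{\vec{\xi}}{V})}{\pi_V(\vxv)},
\]
I would take $\vxv\to\vx$ and $V\to+\8$. On the left, the passage from mesoscopic to macroscopic LMA gives $\tilde{\Phi}^\pm_{\xi}(\vxv)\to\Phi^\pm_{\xi}(\vx)$, so the left side tends to $\Phi^+_{\xi}(\vx)/\Phi^-_{\xi}(\vx)$. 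On the right, writing $\pi_V=e^{-V\psi^{ss}}$ (i.e. using $\psi^{ss}=\lim_V -\frac{1}{V}\log\pi_V$) turns the ratio into $\exp\bbs{-V\bbs{\psi^{ss}(\vx+\frac{\vec{\xi}}{V})-\psi^{ss}(\vx)}}$, whose exponent is a finite-difference quotient converging to $-\vec{\xi}\cdot\nabla\psi^{ss}(\vx)$. Equating the two limits yields $\Phi^+_{\xi}(\vx)/\Phi^-_{\xi}(\vx) = e^{-\vec{\xi}\cdot\nabla\psi^{ss}(\vx)}$, which is exactly \eqref{iff1}.

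Finally I would substitute $\vp\mapsto\nabla\psi^{ss}(\vx)-\vp$ in the grouped Hamiltonian and apply \eqref{iff1}. Using $e^{\pm\vec{\xi}\cdot(\nabla\psi^{ss}-\vp)} = e^{\pm\vec{\xi}\cdot\nabla\psi^{ss}}e^{\mp\vec{\xi}\cdot\vp}$ together with $\Phi^+_{\xi}e^{\vec{\xi}\cdot\nabla\psi^{ss}}=\Phi^-_{\xi}$ and its inverse $\Phi^-_{\xi}e^{-\vec{\xi}\cdot\nabla\psi^{ss}}=\Phi^+_{\xi}$, each $\vec{\xi}$-term maps to $\Phi^-_{\xi}(e^{-\vec{\xi}\cdot\vp}-1)+\Phi^+_{\xi}(e^{\vec{\xi}\cdot\vp}-1)$, i.e. back to itself. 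Summing over $\vec{\xi}$ gives $H(\nabla\psi^{ss}(\vx)-\vp,\vx)=H(\vp,\vx)$, which is \eqref{newS}; setting $\vp=\vec{0}$ and recalling $H(\vec{0},\vx)=0$ recovers $H(\nabla\psi^{ss}(\vx),\vx)=0$, so $\psi^{ss}$ solves the stationary HJE.

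The main obstacle is the regularity required to justify the right-hand limit in the second step: in general $\psi^{ss}$ is only the (possibly merely upper semicontinuous) viscosity solution constructed from $\pi_V$ in \cite{GL_vis}, so the convergence $-V(\psi^{ss}(\vx+\vec{\xi}/V)-\psi^{ss}(\vx))\to-\vec{\xi}\cdot\nabla\psi^{ss}(\vx)$ and the interchange of the two limits are genuinely delicate and hold only where $\psi^{ss}$ is differentiable. Accordingly I would establish the statement at the formal level already granted by the hypotheses (existence and differentiability of $\psi^{ss}$); the term-by-term algebra of the first and third steps is then routine.
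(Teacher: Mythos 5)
Your proposal is correct and takes essentially the same route as the paper: pass to the large-$V$ limit in the grouped Markov chain detailed balance ratio to obtain \eqref{iff1}, then verify \eqref{newS} algebraically on the Hamiltonian rewritten in grouped fluxes as in \eqref{H_nn}. The only minor difference is that the paper factors the difference $H(\nabla\psi^{ss}(\vx)-\vp,\vx)-H(\vp,\vx)$, which also yields the converse implication \eqref{newS}$\Rightarrow$\eqref{iff1} by matching coefficients of the exponentials $e^{\vec{\xi}\cdot\vp}$ for distinct $\vec{\xi}$, whereas your term-by-term substitution proves only \eqref{iff1}$\Rightarrow$\eqref{newS}; since you derive \eqref{iff1} directly from the hypotheses, the proposition's conclusions are nonetheless fully established.
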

\begin{proof}

Let the equilibrium $\pi_V(\vxv)$ to mesoscopic effective stochastic process  be $\pi_V(\vxv) = e^{-V \psi_V^{ss}(\vxv)}$. Then the Markov chain detailed balance condition \eqref{master_db_n} implies
\begin{equation}\label{grouping1}
\frac{\pi_V(\vxv)}{\pi_V(\vxv+\frac{\vec{\xi}}{V})}  \tilde{\Phi}^+_{\xi}(\vxv) =  \tilde{\Phi}^-_{\xi}(\vxv+\frac{\vec{\xi}}{V}).
\end{equation}
Since as $V\to +\8$, $\vxv \to \vx$ and $\psi^{ss}_V(\vxv) \to \psi^{ss}(\vx)$, then
\begin{equation}
\frac{\pi_V(\vxv)}{\pi_V(\vxv+\frac{\vec{\xi}}{V})} = e^{ \vec{\xi} \cdot \int_0^1 \nabla \psi^{ss}_V(\vxv + \theta \frac{\vec{\xi}}{V}) \ud \theta} \to e^{\vec{\xi} \cdot \nabla \psi^{ss} (\vx)}.
\end{equation}
Then taking limit in \eqref{grouping1}, we obtain   \eqref{iff1}.
Using the notation for probability flux $\Phi^\pm_{\xi}(\vx)$ in \eqref{group_flux}, the Hamiltonian becomes
\begin{equation}\label{H_nn}
H(\vec{p},\vec{x}):= \sum_{\vec{\xi}}     \bbs{ \Phi^+_{\xi}(\vec{x})e^{\vec{\xi} \cdot \vec{p}}   -  \Phi^+_{\xi}(\vec{x}) +  \Phi^-_{\xi}(\vec{x})e^{-\vec{\xi} \cdot\vec{p}}   -  \Phi^-_{\xi}(\vec{x}) }.
\end{equation}
Since $\vp$ in \eqref{newS} is arbitrary, we  rearrange and take out common factor $e^{\vec{\xi}\cdot \vp}$. Then the even symmetry of $H$ in  \eqref{geneH} is equivalent to for any $\vp$
\begin{equation}
\begin{aligned}
&H(\nabla\psi^{ss}(\vx)-\vp,\vx) - H(\vp,\vx)\\
 =&  \sum_{ \vec{\xi}}  \bbs{ \Phi^+_{\xi}(\vec{x})e^{\vec{\xi} \cdot \nabla\psi^{ss}(\vec{x})} e^{-\vec{\xi} \cdot \vp}    +  \Phi^-_{\xi}(\vec{x})e^{-\vec{\xi} \cdot\nabla\psi^{ss}(\vec{x})} e^{\vec{\xi} \cdot \vp}    - \Phi^+_{\xi}(\vec{x})e^{\vec{\xi} \cdot \vp}     -  \Phi^-_{\xi}(\vec{x})e^{-\vec{\xi} \cdot \vp }   } \\
=& \sum_{ \vec{\xi} }     \bbs{ \Phi^+_{\xi}(\vec{x})e^{\vec{\xi} \cdot \nabla\psi^{ss}(\vec{x})}      -  \Phi^-_{\xi}(\vec{x})}    \bbs{e^{-\vec{\xi}   \cdot \vp}    -  e^{-\vec{\xi} \cdot \nabla\psi^{ss}(\vec{x})} e^{\vec{\xi} \cdot \vp}       }.
 \end{aligned}
\end{equation}
This means the coefficients of exponential function $e^{\vec{\xi}\cdot \vp}$ for each distinct $ \vec{\xi}$ must be same, hence \eqref{iff1} is equivalent to \eqref{newS}.
\end{proof} 
{\blue We remark the existence of $\psi^{ss}(\vx)$ constructed from a positive invariant measure $\pi_V(\vxv)$ satisfying \eqref{master_db_n} was proved in \cite{GL_vis} in the sense of an upper semicontinuous (USC) viscosity solution to stationary HJE following Barron-Jensen’s definition \cite{Barron_Jensen_1990} for USC viscosity solution. However, the uniqueness and selection principle for those USC viscosity solution is still open.}

\begin{cor}\label{thm3}
Let $H$ be the Hamiltonian for a chemical reaction defined in \eqref{H} and $\peq$ be any steady states for RRE \eqref{odex}. Then a necessary condition for the symmetry of $H$  \eqref{newS} is that  for each reaction vector $\vec{\xi}$ in the chemical reaction 
\begin{equation}\label{balance_nu}
   \Phi^+_{\xi}(\peq) = \Phi^-_{\xi}(\peq). 
\end{equation}
\end{cor}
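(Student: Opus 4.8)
The plan is to deduce the flux balance \eqref{balance_nu} from the symmetry \eqref{newS} by exploiting the steady-state condition $\nabla_p H(\vec 0,\peq)=\vec 0$. By Proposition \ref{prop_mcdb} the symmetry \eqref{newS} is equivalent to the pointwise identity \eqref{iff1}, namely $\Phi^-_{\xi}(\vx)=e^{\vec{\xi}\cdot\nabla\psi^{ss}(\vx)}\,\Phi^+_{\xi}(\vx)$ for all $\vx$ and all reaction vectors $\vec{\xi}$. The content of the corollary is therefore that the exponential factor $e^{\vec{\xi}\cdot\nabla\psi^{ss}(\peq)}$ must collapse to $1$ on every active reaction direction at a steady state; equivalently, it suffices to show $\vec{\xi}\cdot\nabla\psi^{ss}(\peq)=0$ whenever $\Phi^+_{\xi}(\peq)\neq 0$.

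First I would write the RRE in its grouped form $\dot{\vx}=\nabla_p H(\vec 0,\vx)=\sum_{\vec{\xi}}\vec{\xi}\,\bbs{\Phi^+_{\xi}(\vx)-\Phi^-_{\xi}(\vx)}$, read off from the grouped Hamiltonian \eqref{H_nn}, and substitute \eqref{iff1} to obtain, at the steady state $\peq$,
\[
\vec 0 = \sum_{\vec{\xi}}\vec{\xi}\,\Phi^+_{\xi}(\peq)\bbs{1-e^{\vec{\xi}\cdot\nabla\psi^{ss}(\peq)}}.
\]
Then I would test this identity against $\nabla\psi^{ss}(\peq)$, which gives
\[
0 = \sum_{\vec{\xi}} \Phi^+_{\xi}(\peq)\, g\bbs{\vec{\xi}\cdot\nabla\psi^{ss}(\peq)},\qquad g(t):=t\bbs{1-e^{t}}.
\]
The key elementary fact is that the two factors $t$ and $1-e^{t}$ always carry opposite signs, so $g(t)\le 0$ for every real $t$, with equality if and only if $t=0$. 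Since $\Phi^+_{\xi}(\peq)\ge 0$ by the LMA \eqref{lma}, every summand is nonpositive, and the vanishing of the sum forces each summand to vanish: for each $\vec{\xi}$, either $\Phi^+_{\xi}(\peq)=0$ or $\vec{\xi}\cdot\nabla\psi^{ss}(\peq)=0$.

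Finally, in either case \eqref{iff1} yields \eqref{balance_nu}: if $\vec{\xi}\cdot\nabla\psi^{ss}(\peq)=0$ then $\Phi^-_{\xi}(\peq)=\Phi^+_{\xi}(\peq)$ directly, while if $\Phi^+_{\xi}(\peq)=0$ then $\Phi^-_{\xi}(\peq)=e^{\vec{\xi}\cdot\nabla\psi^{ss}(\peq)}\cdot 0=0$, so again $\Phi^+_{\xi}(\peq)=\Phi^-_{\xi}(\peq)$. I expect the main obstacle to be precisely this gap: the symmetry relation \eqref{iff1} by itself only gives $\Phi^-_{\xi}=e^{\vec{\xi}\cdot\nabla\psi^{ss}}\Phi^+_{\xi}$, which is \emph{not} the desired equality unless the exponent vanishes, and so the steady-state condition must be invoked to kill the exponent. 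An alternative route to the same conclusion is to note that at a steady state $\tfrac{\ud}{\ud t}\psi^{ss}=0$, so the dissipation identity \eqref{Ly} forces $\la K(\peq)\nabla\psi^{ss}(\peq),\nabla\psi^{ss}(\peq)\ra=0$; since $K$ is strictly positive definite on $G$ with kernel $\kk(\nu)$ (Lemmas \ref{lem_Hdege} and \ref{Hconvex}), this gives $\nabla\psi^{ss}(\peq)\in\kk(\nu)$ and hence $\vec{\xi}\cdot\nabla\psi^{ss}(\peq)=0$ for every reaction vector, after which \eqref{balance_nu} is immediate. I would favor the first route, as it avoids any positivity assumption on $\peq$ and rests only on the sign of $g$.
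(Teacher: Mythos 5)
Your proof is correct, and your preferred (first) route is genuinely different from the paper's; in fact the paper's proof is exactly your ``alternative route.'' The paper takes $\vx_0=\peq$, observes $\frac{\ud}{\ud t}\psi^{ss}(\vx(t))\equiv 0$ along the constant solution, invokes the dissipation identity \eqref{Ly} to get $\la K(\peq)\nabla\psi^{ss}(\peq),\nabla\psi^{ss}(\peq)\ra=0$, and uses strict convexity of $H$ in $G$ (Lemma \ref{Hconvex}) to conclude $\nabla\psi^{ss}(\peq)\in\kk(\nu)$, after which evaluating \eqref{iff1} at $\peq$ gives \eqref{balance_nu}. Your first route reaches the same conclusion by substituting \eqref{iff1} into the grouped steady-state equation and pairing with $\nabla\psi^{ss}(\peq)$; note the two computations are two faces of the same identity, since your tested sum $\sum_{\vec{\xi}}\Phi^+_{\xi}(\peq)\,g(t_{\xi})$ with $t_{\xi}=\vec{\xi}\cdot\nabla\psi^{ss}(\peq)$ is precisely $\la \dot{\vx},\nabla\psi^{ss}(\peq)\ra=-\la K(\peq)\nabla\psi^{ss}(\peq),\nabla\psi^{ss}(\peq)\ra$. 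What differs, and what your version buys, is the equality-case analysis: the paper's argument needs the quadratic form of $K$ to be strictly positive on $G$, which by the proof of Lemma \ref{Hconvex} implicitly requires $\Phi^+_j(\peq)+\Phi^-_j(\peq)>0$ for each $j$, i.e., a positivity/interiority assumption on $\peq$; your per-term sign argument via $g(t)=t\bbs{1-e^{t}}\le 0$ (with equality iff $t=0$) needs only $\Phi^+_{\xi}(\peq)\ge 0$, automatic from the LMA \eqref{lma} on $\bR^N_+$, and it handles the degenerate alternative $\Phi^+_{\xi}(\peq)=0$ cleanly, since \eqref{iff1} then forces $\Phi^-_{\xi}(\peq)=0$ as well. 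So your proof is slightly more general --- it covers boundary or degenerate steady states at which some grouped fluxes vanish --- at the mild cost of concluding only $\vec{\xi}\cdot\nabla\psi^{ss}(\peq)=0$ on active reaction directions, rather than the stronger statement $\nabla\psi^{ss}(\peq)\in\kk(\nu)$ that the paper's convexity argument yields; both suffice for \eqref{balance_nu}.
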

\begin{proof}
If $\peq$ is a steady state for RRE \eqref{odex}, we know $\nu \nabla \psi^{ss}(\peq)=\vec{0}$. Otherwise, take $\vx_0=\peq$ as initial data, from the estimate \eqref{Ly},
\begin{equation}
0\equiv \frac{\ud \psi^{ss}(\vx(t))}{\ud t} = -\la \nabla \psi^{ss}(\vx), K(\vx)\nabla\psi^{ss} \ra <0,
\end{equation}
due to $H$ is strictly convex in $G$ (see Lemma \ref{Hconvex}). This contradiction shows $\nabla \psi^{ss}(\peq)\in\kk(\nu)$ thus $\nu \nabla \psi^{ss}(\peq)=\vec{0}$. Therefore, evaluating \eqref{iff1} at $\peq$ yields \eqref{balance_nu}.
\end{proof}
\begin{rem} 
  As a slight generalization,  an even-symmetry of the Hamiltonian 
  w.r.t $\frac{\vq}{2}$ is
\begin{equation}\label{geneH}
H(\vp, \vx) = H(\vq(\vx)-\vp, \vx), \quad \forall \vx, \vp
\end{equation}
for some   function $\vq(\vx)$. This is equivalent to
\begin{equation} 
L(\vs,\vx) - L(-\vs, \vx) =   \vs \cdot \vq(\vx).
\end{equation}
A Hamiltonian which is quadratic in terms of the momentum $\vp$ is a special cases of \eqref{newS}. For an irreversible drift-diffusion process, $\ud x = - \vec{q} \ud t + \sqrt{2\eps}\ud B$, the corresponding Hamiltonian $H(\vp,\vx)=\vp \cdot (\vp-\vec{q})$ satisfies even-symmetry \eqref{geneH}. Another example in electromagnetism is the even-symmetry for momentum $\vp$ in Hamiltonian w.r.t the magnetic vector potential \cite{morpurgo1954time}.
\end{rem}

  In the following subsections, we study   two  consequences for a symmetric Hamiltonian.
   
  (I) Under   symmetric assumption \eqref{newS},  we provide an Onsager's strong form of gradient flow structure in terms of the energy landscape $\psi^{ss}$ in Section \ref{sec_gf_quasi}. That is to say, the conservative part $W(\vx)$ vanish in the previous conservative-dissipation decomposition for RRE \eqref{odeDD}.

    (II)  The symmetric Hamiltonian \eqref{newS} is equivalent to the time reversal symmetry in the Lagrangian $L$ upto a null Lagrangian (see \eqref{LLL})
\begin{equation} 
L(\vs,\vx) - L(-\vs, \vx) =   \vs \cdot \nabla \psi^{ss}(\vx), \quad \forall \vx, \vs.
\end{equation}
Here  $\vs \cdot \nabla \psi^{ss}(\vx)$ is a null Lagrangian, whose Euler-Lagrange equation vanishes.
Denote the time reversed curve of $\vx(t)$ as $\vxr(t) = \vx(T-t)$ with $\vxr(T)=\vxr_T=\xc$ and $\vxr(0)=\vxr_0=\xa$. Then take $\vs=\dot{\vx}^{\text{R}}(t)$ in \eqref{LLL} and integrate w.r.t time $t$ from $0$ to $T$ leads to 
the  action cost identity 
\begin{equation}
\ce{Act}(\vxr(\cdot)) - \ac(\vx(\cdot)) = \psi^{ss}(\vxr_T) - \psi^{ss}(\vxr_0);
\end{equation}
see Proposition \ref{thm2}.
We point out  the time reversed least action path is an application of the Freidlin-Wentzell theory \cite{Freidlin_Wentzell_2012} for a general exit problem to the chemical reactions while it also gives the most probable path connecting two steady states $\xa,\xb$. 
   As a well-known application of  due to symmetric Hamiltonian, for a Langevin dynamics with a potential form drift $-\nabla U$, the Freidlin-Wentzell theory \cite{Freidlin_Wentzell_2012} shows the 'uphill'  least action curve with nonzero action is exactly the time reversal of the `downhill' least action curve. The associated Hamiltonian for this Langevin dynamics is
$H(\vp,\vx) = \vp \cdot (\vp -\nabla U)$, which is symmetric 
$
H(\vp, \vx) = H(\nabla U - \vp, \vx), \quad \forall \vx, \vp.
$ 

    The idea of using this kind of symmetric Hamiltonian to find the time reversed least action curve  for some classical mechanics was first discovered by \textsc{Morpurgo} et.al. \cite{morpurgo1954time}.  
In the RRE detailed balanced case, the symmetric property w.r.t $\frac12\nabla\KL(\vx,\peq)$ of the Hamiltonian was first studied in  \cite{Dykman_Mori_Ross_Hunt_1994}, \textsc{Dykman} et.al..  With a symmetric Hamiltonian, the corresponding generalized gradient flow was first studied in \textsc{Mielke} et.al \cite{Mielke_Renger_Peletier_2014}, where the residual of the gradient flow was connected with the rate function in the large deviation principle. 
 The symmetric Hamiltonian was also used in \textsc{Bertin} \cite{bertini2002macroscopic} to study the fluctuation symmetry;   see also a comprehensive review   \cite{Bertini15} on the macroscopic fluctuation theory and recent development in   \cite{Renger_2018,  Kraaij_Lazarescu_Maes_Peletier_2020, Patterson_Renger_Sharma_2021}.    

\subsection{Onsager's strong form of gradient flow in terms of energy landscape $\psi^{ss}$}\label{sec_gf_quasi}
  In this section, under the symmetric assumption \eqref{newS} for Hamiltonian, we derive a strong form of gradient flow formulation, where the steady solution $\psi^{ss}$ to the HJE serves as a free energy. This gradient flow immediately gives  vanishing of the conservative part $W(\vx)=0$ in RRE decomposition \eqref{odeDD}. 
\begin{prop}\label{prop_sgf}
Under the symmetric assumption \eqref{newS}, the RRE \eqref{odex} becomes a strong gradient flow in terms of $\psi^{ss}(\vx)$
 \begin{equation}\label{strong_GF}
 \dot{\vx} =- K(\vx) \nabla \psi^{ss}(\vx),  \quad K(\vx) = \int_0^1 \frac12\nabla^2_{pp} H(\theta \nabla \psi^{ss}(\vx)) \ud \theta.
 \end{equation}
 Particularly, for chemical reaction with RRE detailed balance \eqref{DB}, \eqref{strong_GF} reduces to
 \begin{equation}\label{strong_GF_DB}
 \dot{\vx} =- K(\vx) \nabla \KL(\vx||\peq), \quad K(\vx)=\sum_{j=1}^M \Lambda\bbs{ \Phi^+_j(\vec{x}), \Phi^-_j(\vx)} \bbs{\vec{\nu}_j \otimes \vec{\nu}_j },
 \end{equation}
 where  $\Lambda(x,y):=\frac{x-y}{\log x -\log y}$ is the logarithmic mean. 
\end{prop}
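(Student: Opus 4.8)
The plan is to extract two differential identities from the symmetry \eqref{newS} and substitute them into the conservative--dissipative decomposition \eqref{odeDD} of Theorem \ref{thm_decom}. Assuming $\psi^{ss}$ is smooth, differentiating the identity $H(\vp,\vx)=H(\nabla\psi^{ss}(\vx)-\vp,\vx)$ once and then twice in $\vp$ (chain rule, with the inner factor $-1$) gives
\[
\nabla_p H(\vp,\vx) = -\,\nabla_p H(\nabla\psi^{ss}(\vx)-\vp,\vx),\qquad
\nabla^2_{pp} H(\vp,\vx) = \nabla^2_{pp} H(\nabla\psi^{ss}(\vx)-\vp,\vx).
\]
These are the only consequences of \eqref{newS} that I will need.

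First I would show the conservative part vanishes. In $W(\vx)=\int_0^1\nabla_p H(\theta\nabla\psi^{ss}(\vx),\vx)\,\ud\theta$ I change variables $\theta\mapsto 1-\theta$ and apply the first identity at $\vp=\theta\nabla\psi^{ss}$; this yields $W(\vx)=-W(\vx)$, hence $W(\vx)=\vec 0$. Then \eqref{odeDD} collapses to $\dot{\vx}=-K(\vx)\nabla\psi^{ss}(\vx)$ with $K(\vx)=\int_0^1(1-\theta)\nabla^2_{pp}H(\theta\nabla\psi^{ss}(\vx),\vx)\,\ud\theta$ as in Theorem \ref{thm_decom}. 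To recover the symmetric weight in \eqref{strong_GF}, I apply $\theta\mapsto 1-\theta$ again, now using the second identity, to obtain the equivalent representation $K(\vx)=\int_0^1\theta\,\nabla^2_{pp}H(\theta\nabla\psi^{ss}(\vx),\vx)\,\ud\theta$; averaging the two representations replaces the weights $1-\theta$ and $\theta$ by the constant $\tfrac12$, which is exactly \eqref{strong_GF}.

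For the detailed balance reduction \eqref{strong_GF_DB}, I would first invoke Lemma \ref{lem_cDB} to take $\psi^{ss}(\vx)=\KL(\vx||\peq)$, so that $\nabla\psi^{ss}=\log\frac{\vx}{\peq}$ and \eqref{tempPhi} gives $\vec{\nu}_j\cdot\nabla\psi^{ss}=\log\frac{\Phi^-_j(\vx)}{\Phi^+_j(\vx)}$. Substituting $\vp=\theta\nabla\psi^{ss}$ into the explicit Hessian $\nabla^2_{pp}H=\sum_j(\vec{\nu}_j\otimes\vec{\nu}_j)\bbs{\Phi^+_j e^{\vec{\nu}_j\cdot\vp}+\Phi^-_j e^{-\vec{\nu}_j\cdot\vp}}$ turns the $j$-th scalar coefficient into $(\Phi^+_j)^{1-\theta}(\Phi^-_j)^{\theta}+(\Phi^-_j)^{1-\theta}(\Phi^+_j)^{\theta}$. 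The two summands are exchanged by $\theta\mapsto 1-\theta$, so integrating against $\tfrac12\,\ud\theta$ and using the elementary integral $\int_0^1(\Phi^+_j)^{1-\theta}(\Phi^-_j)^{\theta}\,\ud\theta=\frac{\Phi^+_j-\Phi^-_j}{\log\Phi^+_j-\log\Phi^-_j}=\Lambda(\Phi^+_j,\Phi^-_j)$ produces the logarithmic-mean coefficients of \eqref{strong_GF_DB}.

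The computations are routine once the two differentiated identities are available, so I do not expect a genuine obstacle; the point demanding care is the regularity of $\psi^{ss}$ needed to differentiate \eqref{newS} twice, which I would either impose as a standing smoothness hypothesis or handle on the subspace $G$ where $H$ is strictly convex (recall $H$ is degenerate on $\kk(\nu)$ by Lemma \ref{lem_Hdege}). As a shortcut for the last paragraph, \eqref{strong_GF_DB} can also be checked directly without passing through the Hessian: since $\Lambda(\Phi^+_j,\Phi^-_j)\,\vec{\nu}_j\cdot\nabla\psi^{ss}=\Lambda(\Phi^+_j,\Phi^-_j)\log\frac{\Phi^-_j}{\Phi^+_j}=\Phi^-_j-\Phi^+_j$, one gets $-K(\vx)\nabla\psi^{ss}=\sum_j\vec{\nu}_j\bbs{\Phi^+_j(\vx)-\Phi^-_j(\vx)}=\dot{\vx}$, recovering the RRE \eqref{odex}.
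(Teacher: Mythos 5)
Your proposal is correct and follows essentially the same route as the paper: the once-differentiated symmetry $\nabla_p H(\vp,\vx)=-\nabla_p H(\nabla\psi^{ss}-\vp,\vx)$ forces $W(\vx)=\vec 0$ (your $\theta\mapsto 1-\theta$ substitution is exactly the paper's evaluation at $\vp=\theta\nabla\psi^{ss}$ followed by integration), and the Hessian symmetry converts the weight $(1-\theta)$ into the constant $\tfrac12$, yielding \eqref{strong_GF}. Your detailed-balance computation via $\int_0^1(\Phi^+_j)^{1-\theta}(\Phi^-_j)^{\theta}\,\ud\theta=\Lambda(\Phi^+_j,\Phi^-_j)$ is just a more explicit rendering of the paper's reduction of \eqref{tmK} to $\frac{\Phi^-_j-\Phi^+_j}{\vec{\nu}_j\cdot\nabla\psi^{ss}}\,\vec{\nu}_j\otimes\vec{\nu}_j$, so no substantive difference remains.
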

\begin{proof}
Recall the decomposition for RRE in \eqref{odeDD}.
When $H(\vp,\vx)$ satisfies the symmetric condition \eqref{newS}, we have 
 \begin{equation}\label{tms1}
 \nabla_p H(\nabla \psi^{ss}(\vx)-\vp, \vx) = - \nabla_p H(\vp,\vx), \quad \forall \vp.
 \end{equation}
 Taking $\vp=\theta \nabla \psi^{ss}$, then
  \begin{equation}
 \nabla_p H((1-\theta)\nabla \psi^{ss}(\vx), \vx) = - \nabla_p H(\theta \nabla \psi,\vx).
 \end{equation}
 Then integrating w.r.t $\theta$ implies
 $$\int_0^1 \nabla_p H(\theta \nabla \psi^{ss}(\vx),\vx) \ud \theta=0$$
 and thus the RRE is simply a strong gradient flow \eqref{strong_GF}.
Furthermore, recall symmetric nonnegative operator   $K(\vx)= \int_0^1 (1-\theta) \nabla^2_{pp} H(\theta \nabla \psi^{ss}(\vx)) \ud \theta$.  From \eqref{tms1}, we have the symmetry 
  $$\int_0^1 (1-\theta) \nabla^2_{pp} H(\theta \nabla \psi^{ss}(\vx)) \ud \theta = \int_0^1  \theta  \nabla^2_{pp} H(\theta \nabla \psi^{ss}(\vx)) \ud \theta.$$
   Thus
 \begin{equation}\label{tmK}
 K(\vx)= \int_0^1 (1-\theta) \nabla^2_{pp} H(\theta \nabla \psi^{ss}(\vx)) \ud \theta = \int_0^1 \frac12\nabla^2_{pp} H(\theta \nabla \psi^{ss}(\vx)) \ud \theta.
 \end{equation}
 
 Particularly,  
 let $\peq$ be a steady solution to \eqref{odex} satisfying RRE detailed balance condition \eqref{DB}. Then we know $\psi^{ss}(\vx)=\KL(\vx||\peq)$ and $\vec{\nu}_j \cdot \log \frac{\vx}{\peq} = \log \frac{\Phi^-_j(\vec{x})}{\Phi^+_j(\vx)}$. Thus the $K$-matrix in \eqref{tmK}  reduces to
\begin{equation}
K(\vx)= \sum_{j=1}^M     \frac{\bbs{ \Phi^-_j(\vx)-\Phi^+_j(\vx)}\vec{\nu}_j\otimes \vec{\nu}_j}{\vec{\nu_j} \cdot \nabla \psi^{ss}(\vx)} =  \sum_{j=1}^M \Lambda\bbs{ \Phi^+_j(\vec{x}), \Phi^-_j(\vx)} \bbs{\vec{\nu}_j \otimes \vec{\nu}_j } .
\end{equation}
 \end{proof}
The above formula \eqref{strong_GF_DB}  can also be written as $\sum_{j=1}^M \Phi_j(\peq) \Lambda\bbs{\frac{\Phi^+_j(\vec{x})}{\Phi_j(\peq)}, \frac{\Phi^-_j(\vec{x})}{\Phi_j(\peq)}  } \bbs{\vec{\nu}_j \otimes \vec{\nu}_j }$, which is known as biochemical conductance in biochemistry \cite{qian2005thermodynamics}. This exactly recovers
  the  well-known strong gradient flow represented by the logarithmic mean $\Lambda(x,y)=\frac{x-y}{\log x -\log y}$   \cite{Hanggi_Grabert_Talkner_Thomas_1984} (\cite[Theorem 2.2]{maas2020modeling}) for RRE detailed balance case.

\begin{rem}
To fit into  more general biochemical  reactions such as gene switch \cite{roma2005optimal},  we 
give a slightly more general symmetric condition for $H$ so that the strong gradient flow structure still holds.
Assume there exists $\alpha(\vx)>0$ such that
\begin{equation}
H(\vp, \vx)= H(\nabla \psi^{ss}(\vx)-\alpha(\vx) \vp, \vx)
\end{equation}
and $H$ still satisfies $H(\vec{0}, \vx)=H(\nabla \psi^{ss}(\vx), \vx)=0$.
Then we have
\begin{equation}\label{tm_gf3}
\frac{\ud}{\ud t} \vx = \nabla_p H(\vec{0}, \vx) = - \alpha(\vx) \nabla_p H(\nabla \psi^{ss}(\vx(t)), \vx(t) ) =-\alpha(\vx) \bbs{\nabla_p H( \vec{0} , \vx(t) ) + 2K(\vx) \nabla \psi^{ss}(\vx)},
\end{equation}
which yields a gradient flow structure
\begin{equation}
\frac{\ud}{\ud t} \vx =  \nabla_p H(\vec{0}, \vx) = -\frac{2\alpha(\vx)}{1+\alpha(\vx)}  K(\vx) \nabla \psi^{ss}(\vx).
\end{equation}
\end{rem}

\subsection{Reversed least action curve and path affinity described by energy landscape $\psi^{ss}$}\label{subsec_reverse}

In order to  study the transition path between two   states $ \xa$ and $ \xb$, we now characterize the time reversed solution to RRE \eqref{odex}. 
Let $\vx(t)$ be the 'downhill' solution to RRE \eqref{odex} with $\vx(0)=\xc$ and $\vx(T)=\xa$ for some finite time $T$. Notice this requires $\xa, \xc$ are not steady states to RRE \eqref{odex}. However, $\xa, \xc$ can be in a small neighborhood of steady states and then taking time goes to infinity gives the transition path between two stable states passing through a saddle point.  Then by Proposition \ref{lem:least}, we know $\vx(t)$ is a least action solution with action cost $\ac(\vxr(\cdot))=0$ in \eqref{A}. 
 We define the time reversed curve for $\vx(t),\vp(t)$ by
\begin{equation}\label{vxr}
\vxr(t) = \vx(T-t), \quad \vpr(t) = \vp(T-t), \quad 0\leq t\leq T.
\end{equation}
Then we know $\vxr$ satisfies
\begin{equation}\label{vxr_ode-K}
\dot{\vx}^{\R} = \sum_{j=1}^M \vec{\nu}_j \bbs{\Phi^-_j(\vxr) - \Phi^+_j(\vxr)}, \quad \vxr_0=\vx(T)=\xa,\,\, \vxr_T=\vx(0)=\xc.
\end{equation}

The following Proposition \ref{thm2} states that   the time reversed solution $\vxr$ with a modified reversed momentum 
\begin{equation}\label{mom}
\vprr(t) = \nabla\psi^{ss}(\vxr(t)) -\vpr(t)
\end{equation}
 is  a 'uphill' least action solution from $\vxr_0=\xa$ to $\vxr_T=\xc$ but with a non-zero  action $\ac(\vxr(\cdot))$.

\begin{prop}\label{thm2}
Given a Hamiltonian $H(\vp,\vx)$ satisfying \eqref{newS}, suppose $L(\vs,\vx)$ is its convex conjugate.
Let $\vx(t),\vp(t)$ be a least action solution for the action functional $\ac(\vx(\cdot))=\int_0^T L(\dot{\vx},\vx) \ud t$ starting from $\vx(0)=\xc$ and ending at $\vx(T)=\xa$. Then for the time reversed  solution $\vxr(t), \vpr(t)$ defined in \eqref{vxr}, we know
\begin{enumerate}[(i)]
\item  the modified time reversed solution $\vxr(t)$, $\vprr(t) = \nabla\psi^{ss}(\vxr(t)) -\vpr(t) $ is  a least action curve starting from $\vxr_0=\vx(T)=\xa$, ending at $\vxr_T=\vx(0)=\xc$ and satisfies the Hamiltonian dynamics
\begin{equation}
\frac{\ud}{\ud t} \vxr = \nabla_p H(\vprr, \vxr), \quad \frac{\ud}{\ud t} \vprr = - \nabla_x H(\vprr , \vxr);
\end{equation}
\item   the corresponding action cost for the least action curve $ \vxr(t) $ is given by
\begin{equation}\label{Kaa}
\ac(\vxr(\cdot))  = \ac(\vx(\cdot)) +  \psi^{ss} (\vxr_T) - \psi^{ss} (\vxr_0).
\end{equation}
\end{enumerate}
\end{prop}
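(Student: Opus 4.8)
The plan is to establish (i) by checking directly that the modified pair $(\vxr, \vprr)$ solves the Hamiltonian system with the prescribed endpoints, and then to deduce (ii) from the null-Lagrangian identity \eqref{LLL}. Since $\vx(t),\vp(t)$ is a least action solution, Lemma \ref{lem:least}(ii) tells me it satisfies the Hamiltonian dynamics \eqref{HM}; conversely, once I verify that $(\vxr,\vprr)$ solves the Hamiltonian system, the same lemma certifies that it is again an extremal (least action) curve. The two analytic inputs I will need are the $\vp$- and $\vx$-derivatives of the symmetry relation \eqref{newS}.

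For part (i), differentiating $H(\vp,\vx)=H(\nabla\psi^{ss}(\vx)-\vp,\vx)$ in $\vp$ yields \eqref{tms1}, namely $\nabla_p H(\nabla\psi^{ss}(\vx)-\vp,\vx)=-\nabla_p H(\vp,\vx)$. First I would compute, from the definition \eqref{vxr}, that $\dot{\vxr}(t)=-\dot{\vx}(T-t)=-\nabla_p H(\vpr,\vxr)$; then I rewrite $\vpr=\nabla\psi^{ss}(\vxr)-\vprr$ (which is exactly \eqref{mom}) and apply \eqref{tms1} to turn the right-hand side into $+\nabla_p H(\vprr,\vxr)$, giving the first Hamiltonian equation. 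The second equation needs the $\vx$-derivative of \eqref{newS}, which by the chain rule produces an extra term involving the Hessian $\nabla^2\psi^{ss}$: evaluated at $\vp=\vpr,\vx=\vxr$ it reads $\nabla_x H(\vpr,\vxr)=\nabla^2\psi^{ss}(\vxr)\,\nabla_p H(\vprr,\vxr)+\nabla_x H(\vprr,\vxr)$. Combining this with $\dot{\vprr}=\nabla^2\psi^{ss}(\vxr)\dot{\vxr}-\dot{\vpr}$ and $\dot{\vpr}=\nabla_x H(\vpr,\vxr)$, and using $\dot{\vxr}=\nabla_p H(\vprr,\vxr)$ from the first equation, the two Hessian contributions cancel and leave $\dot{\vprr}=-\nabla_x H(\vprr,\vxr)$, as claimed. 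The endpoints $\vxr_0=\xa$ and $\vxr_T=\xc$ are immediate from \eqref{vxr}.

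For part (ii), I would first record the identity \eqref{LLL}, $L(\vs,\vx)-L(-\vs,\vx)=\vs\cdot\nabla\psi^{ss}(\vx)$, which follows from the symmetry \eqref{newS} by the substitution $\vp\mapsto\nabla\psi^{ss}(\vx)-\vp$ in the Legendre transform \eqref{L}. Applying it with $\vs=\dot{\vxr}(t)$, and using $-\dot{\vxr}(t)=\dot{\vx}(T-t)$ and $\vxr(t)=\vx(T-t)$, splits the action $\ac(\vxr(\cdot))$ into $\int_0^T L(\dot{\vx}(T-t),\vx(T-t))\,\ud t$ plus $\int_0^T \dot{\vxr}(t)\cdot\nabla\psi^{ss}(\vxr(t))\,\ud t$. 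The change of variable $\tau=T-t$ turns the first integral into $\ac(\vx(\cdot))$, while the second integrand is the exact derivative $\tfrac{\ud}{\ud t}\psi^{ss}(\vxr(t))$, which integrates to $\psi^{ss}(\vxr_T)-\psi^{ss}(\vxr_0)$. This is precisely \eqref{Kaa}.

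The main obstacle is the second Hamiltonian equation in part (i): carefully carrying the chain-rule term $\nabla^2\psi^{ss}(\vxr)\,\nabla_p H(\vprr,\vxr)$ coming from differentiating \eqref{newS} in $\vx$, and verifying that it cancels exactly against the Hessian term generated by $\tfrac{\ud}{\ud t}\nabla\psi^{ss}(\vxr(t))$ inside $\dot{\vprr}$. The rest is essentially the null-Lagrangian computation. One should also keep in mind the standing assumption (noted before the statement) that $\xa,\xc$ are not steady states, so that a finite-time extremal joining them exists.
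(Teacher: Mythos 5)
Your proposal is correct and takes essentially the same approach as the paper: part (i) reproduces the paper's differentiated symmetry identities \eqref{HH2}--\eqref{HH3} and the exact cancellation of the Hessian terms in $\frac{\ud}{\ud t}\vprr$, with the same endpoint bookkeeping. For part (ii) you integrate the null-Lagrangian identity \eqref{LLL} (derived from the substitution $\vp\mapsto\nabla\psi^{ss}(\vx)-\vp$ in the Legendre transform) along $\vxr$, while the paper's displayed computation instead runs through $L=\vp\cdot\dot{\vx}-H$ along the reversed Hamiltonian trajectory; these are equivalent, and the paper itself remarks after its proof that the cost identity ``only relies on'' the null-Lagrangian observation \eqref{LLL}.
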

 
\begin{proof}
First, recall the definition of $H$ in \eqref{H} which satisfies \eqref{newS}. Then regarding $\vp,\vx$ as independent variables in $H(\nabla \psi^{ss}(\vx)-\vp, \vx) \equiv H(\vp,\vx)$,  taking derivatives, we directly have following identities
\begin{align}
&\nabla_p H(\vp,\vx) \equiv -\nabla_p H(\nabla \psi^{ss}(\vx)-\vp, \vx), \label{HH2}
 \\
& \nabla_x H(\vp,\vx) \equiv \nabla_x H (\nabla \psi^{ss}(\vx)-\vp,\vxr) +  \nabla^2 \psi^{ss}(\vx){\nabla_p H(\nabla \psi^{ss}(\vx)-\vp,\vx)}. \label{HH3}
\end{align}

Second,  from \eqref{HH2}, we have
  \begin{equation}\label{hl1}
\frac{\ud}{\ud t} \vxr(t) = -\nabla_p H(\vpr(t),\vxr(t))= \nabla_p H(\vprr(t), \vxr(t)).
\end{equation}

Third, 
by the definition of modified reversed momentum  $\vprr$ and  \eqref{HH3}, we have
 \begin{equation}\label{hl2}
 \begin{aligned}
 \frac{\ud}{\ud t} \vprr(t) =& -  \frac{\ud}{\ud t} \vpr(t) +  \nabla^2 \psi^{ss}(\vxr(t)){\dot{\vx}^{\R}(t)}    =  \dot{\vp}\Big|_{T-t} + \nabla^2 \psi^{ss}(\vxr(t)) {\nabla_p H(\vprr(t),\vxr(t))} \\  =& -\nabla_x H(\vpr(t),\vxr(t)) +  \nabla^2 \psi^{ss}(\vxr(t)){\nabla_p H(\vprr(t),\vxr(t))}  =   - \nabla_x H(\vprr(t) , \vxr(t)),
 \end{aligned}
 \end{equation}

Fourth, combining \eqref{hl1} and \eqref{hl2}, we know $(\vxr(t), \vprr(t))$ solves a Hamiltonian dynamics. Notice for any Hamiltonian trajectory $(\vx(t),\vp(t))$, the Lagrangian can be expressed as
\begin{equation}
L(\dot{\vx}(t), \vx(t)) = \vp(t)\cdot \dot{\vx}(t) -H(\vp(t),\vx(t)).
\end{equation}
From \eqref{newS}, one can directly compute the action cost along $\vxr$
 \begin{equation}
 \begin{aligned}
\ac(\vxr(\cdot)) =& \int_0^T   L(\dot{\vx}^{\R}(t),\vxr(t))  \ud t= \int_0^T \bbs{\vprr(t) \cdot \dot{\vx}^{\R}(t) - H(\vprr(t),\vxr(t))  }\ud t\\
=&\int_0^T \bbs{ \nabla \psi^{ss}(\vxr) \cdot \frac{\ud}{\ud t} \vxr(t) -\vpr(t) \cdot  \nabla_p H(\vprr,\vxr)    - H(\vprr(t),\vxr(t)) } \ud t.
\end{aligned}
 \end{equation}
where we used \eqref{mom}. Then by  \eqref{HH2}, we obtain
 \begin{equation}\label{tmAA}
 \begin{aligned}
\ac(\vxr(\cdot))=&\int_0^T \bbs{  \nabla\psi^{ss}(\vxr) \cdot \frac{\ud}{\ud t} \vxr(t) + \vpr(t) \cdot  \nabla_p H(\vpr,\vxr)  - H(\vpr(t),\vxr(t)) } \ud t \\
=& \int_0^T L(\dot{\vx}(T-t), \vx(T-t) ) \ud t + \int_0^T  \frac{\ud}{\ud t} \psi^{ss}(\vxr(t)) \ud t\\
=& \ac(\vx(\cdot)) +\psi^{ss} (\vxr_T) - \psi^{ss} (\vxr_0),
 \end{aligned}
 \end{equation}
which concludes \eqref{Kaa}.
\end{proof}
The proof for the minimum cost  only relies on the observation for null Lagrange $L(\dot{\vx},\vx) - L(-\dot{\vx}, \vx)$ in \eqref{LLL}.
The statement (ii) for the reversed  action cost can be understood as a path affinity describing in which direction the chemical reaction (or a general nonlinear dynamics) proceed. Precisely, this affinity is  given by  $\ac(\vxr(\cdot)) - \ac(\vx(\cdot)) =  \psi^{ss} (\vxr_T) - \psi^{ss} (\vxr_0)$.
 In the case of $\vpr\equiv \vec{0}$, the 'downhill' path corresponds to the  solution to RRE \eqref{odex} with action cost $\ac(\vx(\cdot))=0$.  In this case, the reversed action cost is
\begin{equation}\label{Ebb} 
 \ac(\vxr(\cdot))  =  \psi^{ss} (\vxr_T) - \psi^{ss} (\vxr_0) =\psi^{ss} (\xc) - \psi^{ss} (\xa). 
 \end{equation}

Usually, the steady solution $\psi^{ss}(\vx)$ to HJE  is known as the quasipotential $V(\xc;\xa)$ upto a constant for the exit problem in the Freidlin-Wentzell theory \cite{Freidlin_Wentzell_2012} in the sense that for $\xc$ in the basin of attraction
 \begin{equation}\label{quasipotential}
\begin{aligned}
\psi^{ss}(\xc) - \psi^{ss}(\xa)= V(\xc;\xa)  =&\inf_{T>0,\,\, \vxr(0)=\xa, \, \vxr(T)=\xc}   \int_0^T L(\dot{\vx}^{\R}(t),\vxr(t)) \ud t.
 \end{aligned}
\end{equation}

 Below we explain why the modified least action path  with associated action cost in \eqref{Ebb} for a given fixed time $T$ coincides with the quasipotential $\psi^{ss}(\vx)$ in \eqref{quasipotential}, where the time $T$ is also a variable to minimize. On the one hand,  the assumption in Proposition \ref{thm2} that there exists a forward least action curve from $\vx(0)=\xc$ to $\vx(T)=\xa$ already gives the curve trajectory with a fixed reaching time $T$. On the other hand, as proved above, the symmetric property ensures that the modified time reversal yields exactly the 'uphill' least action curve with the same trajectory and the same reaching time $T$, but only along a reversed time order. That is to say, the associated action cost in \eqref{Ebb} with the reaching time $T$ also implies the optimal time in \eqref{quasipotential} is exactly the reaching time $T$ in the 'downhill' solution to the RRE. 
 The minimization problem \eqref{quasipotential} is also called Maupertuis's principle of least action in an undefined time horizon  and we will formulate it as an optimal control problem in a undefined (infinite) time horizon in Section \ref{sec_control}  without the detailed balance.

The quasipotential is a generalized potential function to quantify the energy barrier for the transitions starting from one stable state to another one. 
The concept of energy barrier (a.k.a activation energy) was initialed by \textsc{Arrhenius} in 1889 who related  the transition rate $\mathcal{K}$ to the  free energy difference via Arrhenius's law $\mathcal{K}\propto e^{-\frac{\delta E}{\eps}}$ with a noise parameter $\eps$ indicating the thermal energy. Using a Langevin dynamics starting from $\vx$ in a basin of attraction, Kramers   estimated the   mean first passage time $\tau(\vx)=\bE^{\vx}(\tau_C)$, i.e., the expectation of the stopping time  defined as the first hitting time  on the boundary of the basin of attraction, which gives an explicit formula for transition rate $\mathcal{K}=\frac{1}{\tau(\vx)}\propto e^{-\frac{\delta E}{\eps}}$. At a rigorously mathematical level,   the large deviation theory for general stochastic processes gives the estimate for the reaction rate by computing the probability of the exit problem from the basin of attraction via the good rate functional $\bP\{C^V\in \Gamma\} \approx e^{-\frac{\inf_{\vx\in\Gamma} \ac(\vx(\cdot))}{\eps}}$; see comprehensive studies in the Freidlin-Wentzell theory \cite{Freidlin_Wentzell_2012} and precise statement in \cite[Theorem 1.6]{Dembo18} for chemical reactions. Therefore, the least action cost computed in \eqref{Ebb} gives the energy barrier for the transition path problem in chemical reactions and this is why we call $\psi^{ss}$ the energy landscape for RRE \eqref{odex}.  
  
    In Proposition \ref{thm2}, the assumption that there exists a forward least action curve from $\vx(0)=\xc$ to $\vx(T)=\xa$ already limits the curve within the stable basin of $\xa$ and not passing beyond the separatrix (boundary of the basin). Within the basin of attraction of stable state $\xa$,  the globally defined energy landscape $\psi^{ss}$ coincides with the quasipotential upto a constant.  
  On the other hand,  the transition path connecting $\xa$ to $\xb$ and passing through some saddle point $\xc$ is one of the most important scientific questions. In this case, the most probable path is piecewisely defined by finding the least action curve from $\xa$ to $\xc$ and then from $\xc$ to $\xb$; see Section \ref{subsec_oc}. The energy barrier for the rare transition shall be computed piecewisely, for instance, the energy barrier for transition $\xa$ to $\xb$ is given by the 'uphill' action cost $\psi^{ss} (\xc) - \psi^{ss} (\xa)$ plus zero action cost for the 'downhill' curve. In practice, given the energy landscape $\psi^{ss}$, there are many methods such as string method \cite{string} to find the saddle point $\xc$.

\subsubsection{Mesoscopic interpretation of path affinity}\label{subsec_aff}
 Denote $(\Omega, \mathcal{F}, \bP)$ as a probobility space. For the large number process $C^V(t)$, define  a new random variable on the Skorokhod space $D([0,T]; \bR^N_+)$ through the froward trajactory $\vx^V(\cdot)$ as
 \begin{equation}\label{zzz}
 Z_{\V}  := \log \frac{\ud \bP^V_{[0,T]}}{\ud \bP^V_{[T,0]}}(\vx^V(\cdot )) 
 \end{equation}
 where $\vx^V_R(t):=\vx^V(T-t)=: (\mathcal{R} \circ \vx^V)(t) $ is the time reversed trajectory and $\mathcal{R}$ is the reversed operator.
 Here $\bP^V_{[0,T]}$ is the probability measure on path space $D([0,T]; \bR^n_+)$ defined via pushforward of $\bP$ by $\vx^V(\cdot)$, i.e., $\vx^V(\cdot)_\#\bP$ and
with the reversed operator $\mathcal{R}$, $\bP^V_{[T,0]}$ is the probability measure on path space $D([T,0]; \bR^N_+)$ defined via pushforward $(\mathcal{R}\circ \vx^V)_\#\bP$. 
The ratio $\frac{\ud \bP^V_{[0,T]}}{\ud \bP^V_{[T,0]}}(\vx^V(\cdot ))$  is the Radon-Nikodym derivative. 
This $Z_{\V}$ is known as the fluctuating entropy production rate of any forward trajectory $\vx^V(\cdot)$, with respect to its reversed trajectory. $Z_{\V}$ was also historically introduced by Onsager as a dissipation function \cite{evans2002fluctuation, Yang_Qian_2020}.

For simplicity in presentation, we assume there is probability density (with the same notations as the distribution) for $\ud \bP^V_{[0,T]}$ then $\ud \bP^V_{[0,T]}(\vx^V )=\bP^V_{[0,T]}(\vx^V ) \ud x$  and similarly we have $\ud \bP^V_{[T,0]}(\vx^V )=\bP^V_{[0,T]}(\mathcal{R}\circ \vx^V ) \ud x$.
Fix starting point $\vx_0=\xb$ and ending point  $\vx_T=\xa$. We define a subset $\Gamma\subset D([0,T]; \bR^N_+)$ as all trajectories $\vx(\cdot)$ starting from $\vx_0$ and ending at $\vx_T$. Then by the large deviation principle in  \cite[Theorem 1.6]{Dembo18} (Theorem 2.6 above), we have
\begin{equation}\label{la1}
\lim_{V\to +\8}\frac{1}{V}\log { \bP^V_{[0,T]}(\vx^V(\cdot)\in \Gamma)}  = -\min_{\vx(\cdot)\in\Gamma\cap AC([0,T]; \bR^N_+)} \int_0^T L(\dot{\vx}(t), \vx(t)) \ud t = \ac(\vx^*(\cdot)),
\end{equation}
where $\min$ is achieved at an interior point $\vx^*(\cdot)$, i.e., the least action curve among $\Gamma$ satisfying Euler-Lagrange equation \eqref{EL}.
On the other hand, for $\bP^V_{[T,0]}$ defined on reversed trajectory above, we have
\begin{equation}
\bP^V_{[T,0]} \{ \vx^V \in \Gamma \} = \bP^V_{[0,T]} \{\vx^V_R \in \Gamma\} =  \bP^V_{[0,T]} \{\vx^V \in \Gamma^R\},
\end{equation}
where $\Gamma^R\subset D([T,0]; \bR^n_+)$ is the set of any trajectories $\vx(\cdot)$ starting from $\vx_T$ and ending at $\vx_0$. Then we have
 \begin{equation}\label{la2}
 \begin{aligned}
\lim_{V\to +\8}\frac{1}{V}\log &{ \bP^V_{[T,0]}(\vx^V(\cdot)\in \Gamma)}= \lim_{V\to +\8}\frac{1}{V}\log { \bP^V_{[0,T]}(\vx^V(\cdot)\in \Gamma^R)}  = -\min_{\vx(\cdot)\in\Gamma^R\cap AC} \int_0^T L(\dot{\vx}(t), \vx(t)) \ud t\\
= &-\min_{\vx(\cdot)\in\Gamma \cap AC} \int_0^T L(\dot{\vx}^{\R}(t), \vxr(t)) \ud t = -\min_{\vx(\cdot)\in\Gamma \cap AC} \int_0^T L(-\dot{\vx}(t), \vx(t)) \ud t = \ac(\vx^*_R),
\end{aligned}
\end{equation}
where $\vx^*_R(\cdot)$ is the least action curve among $\Gamma^R$ satisfying Euler-Lagrange equation \eqref{EL}.

Under symmetric assumption \eqref{geneH}  for $H$,  Proposition \ref{thm2} tells us $\vx^*_R$ is exactly the time reversal of $\vx^*$, i.e., $\vx^*_R(t)=\vx^*(T-t)$. Thus plugging the least action curve $\vx^*$ into  \eqref{la1} and \eqref{la2} and taking difference,   we use the relation \eqref{LLL} to derive
\begin{equation}
\begin{aligned}
  &\lim_{V\to +\8}\frac{1}{V}\log { \bP^V_{[0,T]}(\vx^V(\cdot)\in \Gamma)} - \lim_{V\to +\8}\frac{1}{V}\log { \bP^V_{[T,0]}(\vx^V(\cdot)\in \Gamma)} \\
 =&  \int_0^T \bbs{ L(-\dot{\vx}^*(t), \vx^*(t)) -L(\dot{\vx}^*(t), \vx^*(t)) } \ud t = - \int_0^T  \dot{\vx}^* \cdot \nabla \psi^{ss} (\vx^*) \ud t = \psi^{ss}(\vx_0) -\psi^{ss}(\vx_T).
 \end{aligned}
\end{equation}
Therefore, the symmetric Hamiltonian  implies that in the large number limit,  the fluctuating entropy production rate $Z_{\V}$ defined in \eqref{zzz} only depends on the given initial $\vx_0$, end states    $\vx_T$ and its value is given by the path affinity $\ac(\vxr(\cdot)) - \ac(\vx(\cdot)) =  \psi^{ss} (\vx_0) - \psi^{ss} (\vx_T).$

As a special example, 
when RRE \eqref{odex} is   detailed balanced, $H$ satisfies the symmetry \eqref{newS} with $\psi^{ss}(\vx)=\KL(\vx||\peq ).$
Then the modified reversed momentum $\vprr(t)$ is
$
\vprr(t) =  \log \frac{\vxr(t)}{\peq} - \vpr(t).
$ 
The corresponding   minimum action can be calculated as
 \begin{equation}
 \ac(\vxr(\cdot)) = \ac(\vx(\cdot)) + \int_0^T \frac{\ud}{\ud t} \KL(\vxr(t)||\peq) \ud t =\ac(\vx(\cdot)) +   \KL(\vxr_T||\peq) - \KL(\vxr_0||\peq).
 \end{equation}
For a special case that
  $\vx(t)$ being the solution to RRE \eqref{odex} with $\vx(0)=\xb$ and $\vx(T)=\xa$ for some finite time $T$, then $\vx(t)$ is a least action solution   with zero action cost  (the 'downhill' path). The time reversed curve $\vxr(t)=\vx(T-t)$ is the most probable path (the 'uphill' path) from $\xa$ to $\xb$ with action cost $\KL(\xb||\peq) - \KL(\xa||\peq)$.
 As time evolves, the  solution to the RRE   and the reversed one stay at the same level set of the Hamiltonian $H\equiv 0$.

\subsection{Non-equilibrium example: a bistable Schl\"ogl catalysis model}\label{subsub_sch}
In this section, we show the symmetric Hamiltonian, brought by the  Markov chain detailed balance, does include a class of non-equilibrium enzyme reactions due to the flux grouping property \eqref{group_flux}.  This type of non-equilibrium enzyme reactions plays important roles in a living cell, for instance in the phosphorylation-dephosphorylation with 2-autocatalysis described in Appendix \ref{app_2d}, the enzyme plays as an intermediate  species but dramatically lower the energy barrier.   The flux grouping  within a same reaction vector leads to multiple steady states and   nonzero steady state fluxes that maintains a ecosystem.
We will  illustrate the idea of constructing the optimally controlled time reserved solution in a well-known bistable reaction in an open system.  Consider Schl\"ogl catalysis model \cite{Schlogl_1972} with environment $\emptyset$, chemostats $A$, $B$ and internal specie $X$
\begin{equation}
A + 2X  \quad \ce{<=>[k^+_1][k^-_1]} \quad   3 X, \qquad  B \quad \ce{<=>[k^+_2][k^-_2]} \quad    X, \qquad A \ce{<=>} \emptyset \ce{<=>}  B
\end{equation}
where $k_1^+,k_1^-,k_2^+,k_2^->0$ are reaction rates. $X$ plays a role of enzyme in biological system and is usually called an intermediate or an autocatalyst \cite{bierman1954studies, van1992stochastic}.  
Denote the concentration of $X$ as $x$ and the concentration of $A,B$ as $a,b$. Here $a,b$ are  assumed to be constants that are sustained by the environment. Below,  we will see there is a bifurcation ratio of $a,b$ to classify the reaction rate system as a non-equilibrium reaction system, except for a special value of $\frac{a}{b}$.  We will use the Schl\"ogl model to describe non-equilibrium steady state behaviors, which have three typical   features: (i) multiple steady states; (ii) nonzero steady state fluxes; (iii) positive entropy production rates at non-equilibrium steady states. We first observe  the flux grouping degeneracy in the same reaction vector,  which usually exists in enzyme reactions such as the Michaelis-Menten kinetics; see Appendix \ref{app_2d} for a realistic Phosphorylation-dephosphorylation model. This is the main reason leading to coexistence of multiple steady states. The RRE for the Schl\"ogl model indeed can be formulated as a Ginzburg-Landau model with double well potential. Based on   Proposition \ref{thm2}, we study the transition path between two stable steady states passing through an unstable state with an associated energy barrier. We will see the energy barrier is not computed from the Ginzburg-Landau double well potential but rather the energy landscape given by the HJE steady solution. That is to say, a simple diffusion approximation can not be used to compute transition path problem in the large deviation regime; c.f. \cite{Assaf_Meerson_2017}.

In detail,
given $a,b>0$, the forward/backward fluxes of these two reactions are
\begin{equation}\label{fluxS}
\Phi_1^+(x) = k_1^+ a x^2, \quad \Phi_1^-(x) = k_1^- x^3, \quad \Phi_2^+(x) = k_2^+ b, \quad \Phi_2^-(x) = k_2^- x.
\end{equation}
Since  $\nu_{11}=1, \nu_{21}=1$ for internal species $X$, we have the macroscopic RRE for $x$
\begin{equation}\label{odeS}
\dot{x} = k_1^+ ax^2 - k_1^- x^3 + k_2^+ b - k_2^- x =: f(x).
\end{equation}
Given $a,b>0$, the steady states of \eqref{odeS} is solved by $f(x_s)=0$, while the RRE detailed balance condition reads
\begin{equation}
k_1^+ ax_s^2 = k_1^- x_s^3, \quad k_2^+ b = k_2^- x_s.
\end{equation}
This means only when the ratio $\frac{a}{b}=\frac{k_1^- k_2^+}{k_1^+k_2^-}$, there exists a unique detailed balanced equilibrium $x_s$. Indeed, in this case, $f(x)=\bbs{\frac{k_1^-}{k_2^-} x^2 +1 }\bbs{k_2^+ b -k_2^- x}$.  Especially, there is no external flux between chemostats $A, B$ and the environment and thus at the detailed balanced equilibrium, the system can be regarded as a closed system. Notice in this simple example, the complex  balance condition \eqref{cDB} is same as detailed balance condition \eqref{DB}, so Schl\"ogl model \eqref{odeS} is a non-detailed/complex balanced RRE system. One can also check the deficiency of this model (as defined in Section \eqref{subsec_de}) is $\delta= 4 - 2-1=1.$ 

In general, assume $\frac{a}{b}\neq \frac{k_1^- k_2^+}{k_1^+k_2^-}$ and $f(x)$ has three zero points. Two of them are stable steady states while one of them is an unstable steady state. We denote the corresponding antiderivative of $-f(x)$ as $\Upsilon(x)$. $\Upsilon(x)$ is a Ginzburg-Landau double well potential with two stable states, which determines the bifurcation and the first order phase transition.  At a non-equilibrium steady states $x^s$, by elementary calculations, there is nonzero steady flux 
$f_1(x^s):=\Phi_1^+(x^s)-\Phi_1^-(x^s) < 0 < f_2(x^s) := \Phi_2^+(x^s)-\Phi_2^-(x^s)$. The nonzero steady flux maintains a source-production circulation $\emptyset \lra B \lra X \lra A \lra \emptyset$ in this open ecosystem at either one of the non-equilibrium steady states. This ecosystem continues exchanging both  chemical energy and   materials with its environment. Compared with equilibrium, the open system continues converting the chemical energy into heat at  non-equilibrium steady states.   The positive entropy production rate is 
$$T\dot{S}=k_{\B}T(\Phi_1^+(x^s)-\Phi_1^-(x^s))\log \frac{\Phi_1^+(x^s)}{\Phi_1^-(x^s)} + k_{\B}T(\Phi_2^+(x^s)-\Phi_2^-(x^s))\log \frac{\Phi_2^+(x^s)}{\Phi_2^-(x^s)} > 0,$$
 which characterizes the irreversible process.

With this simple non-equilibrium reaction system, we illustrate how to find a transition path as a least action curve (in the sense of the large deviation theory in \cite[Theorem 1.6]{Dembo18}) connecting two non-equilibrium steady states. In general, $\Upsilon(x)$ is not symmetric but without loss of generality we simply assume the following symmetric form
\begin{equation}
\Upsilon(x) = \frac{k_1^-}{4} [(x-\theta)^2 - r^2]^2.
\end{equation}
This typical symmetric double well potential has two stable local minimums $\theta\pm r$ and an unstable critical point $\theta$ provided $(k_1^+)^2 a^2 - 3 k_1^- k_2^->0$.
Then $f(x) = - \pt_x \Upsilon(x)$ implies $\theta = \frac{a k_1^+}{2 k_1^-}$, $r=\frac{\sqrt{a^2(k_1^+)^2 -3k_1^- k_2^-}}{\sqrt{3}k_1^-}$. Here $k_2^+$ will be a slaver parameter to ensure symmetry.
With the fluxes in \eqref{fluxS}, the Hamiltonian is
\begin{equation}
\begin{aligned}
H(p,x)=&[\Phi_1^+(x) + \Phi_2^+(x)](e^p -1)+[\Phi_1^-(x)+ \Phi_2^-(x)](e^{-p}-1)\\
=&(k_1^+ ax^2 + k_2^+ b)(e^p -1) + (k_1^- x^3 + k_2^- x)(e^{-p}-1),
\end{aligned}
\end{equation}
which is strictly  convex w.r.t $\vp$. This degeneracy is due to the same reaction vector yields a flux grouping. Denote 
$$\alpha(x):= \frac{\Phi_1^-(x)+ \Phi_2^-(x)}{\Phi_1^+(x) + \Phi_2^+(x)} = \frac{k_1^- x^3 + k_2^- x}{k_1^+ ax^2 + k_2^+ b}.$$ Then we know $p=0$ or $p=\log \alpha(x)$ are solutions to $H(p,x)=0$. By elementary calculations, one can verify
\begin{equation}
\nabla_p H(p,x)\big|_{p=0} = f(x) = - \nabla_p H(p,x) \big|_{p=\log \alpha}, 
\end{equation}
and the symmetry w.r.t $\frac{\log \alpha}{2}$
\begin{equation}
H(p,x)=H(\log \alpha -p, x), \quad \forall x, p.
\end{equation}
Assume the 'downhill' RRE starts from some initial state $\vx(0)$ and then goes to one stable state $\vx(T_\eps)\to\theta-r$. Here $\eps>0$ is the $\eps$-neighborhood of $\theta-r$ and $T_\eps\to +\8$ as $\eps\to 0^+$. 
Then by Proposition \ref{thm2}, the modified time reversed solution $\vxr(t)$ and $\vprr(t) = -\vpr(t)+\vq(\vxr(t))=\log \alpha $ is  still a least action solution starting from $\vxr_0=\vx(T_\eps)\approx \theta-r$ and ending at $\vxr_{T_\eps}=\vx(0)$ and satisfy the same Hamiltonian dynamics.

In this 1D example, there always exists a potential function $\psi^{ss}(x)$ such that $\log \alpha(x) =  \pt_x \psi^{ss}(x)$. This $\psi^{ss}$ is the steady solution to the HJE, and is a Lyapunov function to RRE \eqref{odeS}.
From Proposition \ref{thm2},   the least action value  is given by
 the path affinity  
 \begin{equation}
 \ac(\vxr(\cdot))  =  \psi^{ss} (\vxr_T) - \psi^{ss} (\vxr_0) = \psi^{ss}(x(0))-\psi^{ss}(\theta-r).
 \end{equation}

 However, we point out energy landscape $\psi^{ss}(x)$ computed from  $\log \alpha =\pt_x \psi^{ss}$ is not same as the double well potential $\Upsilon(x)$ in RRE \eqref{odeS}.  They are   two different Lyapunov functions but have same increasing/decreasing regimes. Indeed, Since $\pt_x \psi^{ss}(x)=\log \alpha$ is the steady solution to the HJE, so by Proposition \ref{prop1}, $\psi^{ss}(\vx)$ is a Lyapunov function satisfying
 \begin{equation}
 \frac{\ud \psi^{ss}} {\ud t} = \pt_x \psi^{ss} \dot{x} = - \pt_x \Upsilon(x) \pt_x \psi^{ss}(x) \leq 0.
 \end{equation}
  Therefore, $\pt_x \Upsilon$ and $\pt_x \psi^{ss}$ has the same monotonicity. Although $\psi^{ss}$ is also a double well potential with the same stable/unstable points as $\Upsilon$,   the affinity of the path is given by the difference in terms of $\psi^{ss}$ instead of $\Upsilon$. At each basin of attraction of stable states, $\psi^{ss}$ coincides with the so-called quasipotential, as explained in \eqref{Ebb}.

Below, we also study the effects of perturbations of chemostats in this sustained non-equilibrium system, specifically, the linear response of the energy landscape $\psi^{ss}$ to a perturbation of the external flux represented by chemostats. Denote the concentration of chemostats as a generic parameter $b$ with perturbation $\eps \tilde{b}$, $\eps\ll 1$. Then the energy landscape satisfies the steady HJE with parameter $b$
\begin{equation}
H(\nabla \psi^{ss}(\vx), \vx, b)=0 = H(\nabla (\psi^{ss}(\vx)+\eps \tilde{\psi}^{ss}(\vx)), \vx, b+\eps \tilde{b}  ).
\end{equation}
Here $\psi^{ss}(\vx)+\eps \tilde{\psi}^{ss}(\vx)$ is the new energy landscape under perturbed chemostats.
Then Taylor's expansion w.r.t. $\eps$ gives the leading order equation
\begin{equation}
\nabla_p H (\nabla \psi^{ss}(\vx), \vx, b) \cdot \nabla \tilde{\psi}^{ss}(\vx) + \nabla_b H(\nabla \psi^{ss}(\vx), \vx, b) \cdot \tilde{b} =0.
\end{equation}
If further assume the symmetry \eqref{newS} for Hamiltonian, the response energy landscape perturbation is given by
\begin{equation}
\frac{\ud \tilde{\psi}^{ss}(\vx(t))}{\ud t} = \nabla \tilde{\psi}^{ss}(\vx(t)) \cdot \dot{\vx} = \nabla_b H(\nabla \psi^{ss}(\vx), \vx, b) \cdot \tilde{b}.
\end{equation}
The rigorous justification for this linear response relation can follow  the method in \textsc{Hairer, Majda}  
\cite{Hairer_Majda_2010}.

\section{Existence of $\psi^{ss}$ and diffusion approximation for transition paths}\label{sec_control}
We have shown in previous sections that the  stationary solution $\psi^{ss}(\vx)$ to HJE \eqref{HJE2psi}  serves as the energy landscape of chemical reactions, facilitates the conservative-dissipative decomposition for RRE, and also  determines both the energy barrier and thermodynamics of chemical reactions. For a detailed/complex balanced RRE, we simply have a convex stationary solution
 $\psi^{ss}(\vx)=\KL(\vx||\peq)$; see Lemma \ref{lem_cDB}. {\blue For general chemical reactions, the existence of $\psi^{ss}$ and obtaining $\psi^{ss}$ via optimal control problem   will be discussed in this section. Based on the strong gradient formulation \eqref{strong_GF} under detailed balance assumption, a    drift-diffusion approximation, which shares the same energy landscape and same symmetric Hamiltonian structure, gives a good quadratic approximation near not only the 'downhill' solution to the macroscopic RRE but also the 'uphill' least action curve; see Section \ref{subsec_diffusion}. 
 
 If a positive steady state $\pi(\vxv)$ to the mesoscopic master equation \eqref{rp_eq} exists, then one way to obtain $\psi^{ss}$  is from the WKB expansion
$
\psi^{ss}(\vx)=\lim_{V\to +\8} \frac{- \log \pi_V(\vx_V)}{V}.
$ 
Rigorously, under the assumption that there exists a positive detailed balanced $\pi_{\vxv}$,
an USC viscosity solution to the stationary HJE \eqref{HJE2psi} in the Barron-Jensen’s sense \cite{Barron_Jensen_1990}  was constructed in \cite{GL_vis} by using this invariant measure $\pi(\vxv)$.
 For the general case without detailed balance, the existence of viscosity solutions to HJE can also be obtained using the dynamic programming method. In  \cite[Theorem 2.2, Theorem 2.41]{Tran21},   viscosity solutions are constructed via the minimization of the action functional $\inf_{T, \vx(\cdot)}\ac_{x_0,T}$ defined in Section \ref{sec4.0}.  So we use Hamiltonian $H(\vp,\vx)$ to   reformulate the transition path problem as a dual problem of the Maupertuis's principle of the least action problem \eqref{quasipotential}.  
That is to say, we regard $\vp$ as a control variable, then the least action problem is a constrained  optimal control problem in a undefined  time horizon (a.k.a infinite time horizon with an optimal terminal time \cite{fleming06}); see Section \ref{subsec_oc}.  Then the energy landscape $\psi^{ss}$ is then represented as the unique weak KAM solution to HJE satisfying given boundary data on the projected Aubry set since the projected
 Aubry set is a uniqueness set for weak KAM solutions \cite{ishii2020vanishing, Gao_2022}.
}

\subsection{Existence of the stationary solution $\psi^{ss}$ via optimal control and viscosity solution}\label{subsec_oc}
Assume $\xa$ and $\xb$ are two steady  states to RRE belonging to  the same  stoichiometric compatibility class such that $\xa-\xb\in G$.
For the most probable path described by a least action problem with $L$ defined in \eqref{L}, recall the minimization problem \eqref{Ebb} based on Maupertuis's principle. Without   the explicit formula of $L$, using the Hamiltonian $H(\vp,\vx)$,   we first reformulate \eqref{Ebb} as the following control problem. Regarding $\vp$ as a control variable, we minimize the running cost described by the action functional in an undefined time horizon  
 \begin{equation}\label{ocn}
 \begin{aligned}
 &v(\vy; \, \xa, c)=\inf_{T,\vp} \int_0^T \bbs{ \vp \cdot \nabla_p H(\vp,\vx) - H(\vp,\vx) + c  }\ud t,\\
 & \text{s.t.}  \,\, \dot{\vx} = \nabla_p H(\vp,\vx),\,\, t\in(0,T),\quad \vx_0=\xa,\,\, \vx_T =\vy.
 \end{aligned}
 \end{equation}
  Here $v(\vy; \,\xa,c )$ is called the value function, $c \geq c_0$ is an energy level and $c_0$ is a critical minimum energy level  such that 
\begin{equation}
\inf_{T, \vx(\cdot)} \int_0^T (L(\dot{\vx}(t),\vx(t))+c_0) \ud t \geq 0 
\end{equation}
and if $c<c_0$ this $\inf$ becomes $-\8$.
From the definition of the critical minimum energy level \cite{contreras1999global}
\begin{equation}\label{mane}
c_0= \sup\{c\in \bR; \, \exists \text{ closed curve } \vx(\cdot) \text{ s.t. } \int_0^T (L(\dot{\vx}(t),\vx(t))+c) \ud t<0 \},
\end{equation}
it is easy to see that for the Lagrangian $L$ and Hamiltonian $H$ in chemical reactions, the critical level $c_0=0.$
 Indeed, on the one hand, since $L\geq 0$ due to Lemma \ref{lem:least}, so we know at least  $c_0\leq 0$. On the other hand,  if $c_0<0$, then one can choose a standing curve $\vx(t)\equiv y$ at a steady state $y$ of RRE such that $\dot{\vx} = \vec{R}(y)\equiv 0$.
 Then one have $L(\dot{\vx},\vx)\equiv 0$ while $\int_0^T (L(\dot{\vx}(t),\vx(t))+c_0) \ud t<0$.
 
From \cite[Theorem 2.39 and Theorem 2.47]{Tran21}, we know for any $c\geq c_0$, the value function $v(y;\, \xa,c)$ is a viscosity solution to the following static HJE
\begin{equation}
H(\nabla v (\vy) , \vy) = c, \quad \forall \vy
\end{equation}
due to $\xa$ is a steady state of RRE.
In the chemical reactions,  $c=0 = c_0$, the above $v(y;\, \xa)=v(y;\, \xa,0)$ is a viscosity solution to HJE $H(\nabla v (\vy) , \vy) = 0$. However, these viscosity solutions are not unique.

  Now  we describe a selection principle via the weak KAM theory \cite{ishii2020vanishing} and then the global energy landscape $\psi^{ss}$ can be represented via the following weak KAM solution. Assume there are only finite steady solutions  to RRE, denoted as the Aubry set $\mathcal{A}=\{\xa_i\}_{i=1}^J$. Then  
\begin{equation}\label{kam}
\psi^{ss}(\vx)=\min_{\xa\in \mathcal{A}} \bbs{ \psi^{ss}(\xa)+v(\vx;\, \xa)}
\end{equation}
is the unique weak KAM solution to stationary HJE satisfying given boundary data on the
 projected Aubry set since the projected Aubry set is a uniqueness set for weak KAM solutions \cite{Gao_2022}. 

\begin{rem}\label{tpt}
  In  \cite{Lazarescu_Cossetto_Falasco_Esposito_2019}, \textsc{Lazarescu} et.al. used  a biased Hamiltonian $H$ with observations for the time-averaged flux and concentration to study the dynamic phase transitions in a long time limit. In an open system without detailed balance, with mixed boundary condition and properly chosen bias for fluxes and concentrations, trajectories converging to a constructed global attractor was obtained in \cite{Lazarescu_Cossetto_Falasco_Esposito_2019} while the optimality of the biased Hamiltonian dynamics  in the optimal control context was still unclear.
Beside the deterministic optimal control problem described above, one can also directly investigate the stochastic optimal control problem from the original large number process $\cv $ with a fixed volume $V$. 
The transition path theory   theory (TPT) was first
proposed by E and Vanden-Eijnden in \cite{weinan2006},  particularly in \cite{MSV09} for Markov jumping process, to obtain   transition paths  and transition
rates at a finite noise level by calculating the committor function, i.e., the stationary solution to the backward equation with two boundary conditions at two stable states $A$ and $B$. In \cite{GLLL20}, an optimally controlled random walk is constructed based on the committor function, which realized Monte Carlo simulations for the transition path almost surely. We refer to \cite{gao2020note, gao2020data, gao2021random} for various applications of reversible/irreversible Fokker-Planck equations and the data-driven random walk approximations.  
\end{rem}

\subsection{Construction of a   drift-diffusion process with the same energy barrier for the transition path}\label{subsec_diffusion}
We have shown the law of large numbers gives the macroscopic RRE however the transition path is in the large deviation regime. In this section, we construct a diffusion approximation, which can also be used to approximate the transition path. The most efficient way for constructing a diffusion approximation is through the  Kramers-Moyal approximation   for the  master equation. We will show it is  exactly equivalent to the quadratic approximation of the Hamiltonian near the solution to the macroscopic RRE. Then using the symmetric Hamiltonian, we give a new construction of diffusion approximation that shares the same energy barrier for transition paths.

Near the minimizer of $\ac(\cdot)$, i.e., the curve solves RRE \eqref{odex}, we have the following quadratic approximation for the running cost.
Denote ${\vs}^*:= \nabla_p H(\vp, \vec{x})\big|_{\vp=0} = \sum_{j=1}^M \vec{\nu}_j \bbs{\Phi^+_j(\vec{x}) - \Phi^-_j(\vec{x})}.$ Then we have
\begin{equation}\label{H222}
H(\vp,\vx) = \vs^* \cdot \vp + \frac12 \vp^T \nabla^2_{pp} H(0,\vx) \vp + o(|\vp|^2)
\end{equation}
and for $\vs\in G$,
\begin{equation}
L(\vs,\vx) = \max_{\vp \in G} (\vs-\vs^*)\cdot \vp - \frac12  \vp^T \nabla^2_{pp} H(\vec{0},\vx) \vp + o(|\vp|^2).
\end{equation}
Then approximately we have $\vs-\vs^* =  \nabla^2_{pp} H(0,\vx) \vp^*$ and
\begin{equation}
L(\vs,\vx) \approx    \frac12 \vp^{*T} \nabla_{pp}^2 H(\vec{0},\vx) \vp^*.
\end{equation}
One way of constructing a  Langevin equation with the corresponding  quadratic Hamiltonian \eqref{H222} is
\begin{equation}\label{clr}
\ud  \vx  = \nabla_p H(\vec{0}, \vec{x})  \ud t + \sqrt{\frac{1}{V}\nabla^2_{pp} H(0,\vx)} \ud B.
\end{equation}
Particularly, for our chemical reaction Hamiltonian, $\nabla^2_{pp} H(0,\vx)= \sum_j \bbs{\Phi_j^+(\vx)+\Phi_j^-(\vx)}\vec{\nu}_j \otimes \vec{\nu}_j.$  The above equation is known as the chemical Langevin equation \cite{GG00}.

 We now explain the above quadratic approximation exactly corresponds to  the Kramers-Moyal approximation   for the CME \eqref{rp_eq}.
 The CME \eqref{rp_eq} can be regarded as a monotone scheme for the RRE \eqref{odex}; see \cite{GL_vis}. The leading 
  Taylor expansion for \eqref{rp_eq} upto the second order yields a diffusion approximation  
  \begin{equation}\label{ttt}
  \pt_t p = -\nabla\cdot \bbs{p \sum_{j=1}^M \vec{\nu}_j \bbs{\Phi_j^+(\vx)-\Phi^-_j(\vx)}} + \frac{1}{2V} \sum_{j=1}^M \la \nabla^2\bbs{p(\Phi_j^+(\vx)+\Phi^-_j(\vx))} \vec{\nu}_j, \vec{\nu}_j \ra + O(\frac{1}{V^2}).
  \end{equation}
   This is known as the Kramers-Moyal expansion for the CME.
  This was also used as `system size expansion'   by \textsc{van Kampen}  in \cite{van1992stochastic} and in numerical analysis, it is also called   a modified equation.
  The corresponding Hamiltonian of \eqref{ttt} via the WKB expansion is
  \begin{equation}
  H(\vp,\vx) =  \sum_{j=1}^M \bbs{\Phi_j^+(\vx)-\Phi^-_j(\vx)} \vp \cdot \vec{\nu}_j  + \frac12 \sum_{j=1}^M \bbs{\Phi_j^+(\vx)+\Phi^-_j(\vx)} (\vp\cdot \vec{\nu}_j)^2.  
  \end{equation}
  This is exactly the same as the quadratic approximation \eqref{H222} of the original Hamiltonian  at $\vp=\vec{0}$.

 However, as illustrated in the Schl\"ogl catalysis model in Section \ref{subsub_sch}, we point out the above quadratic approximation for the Hamiltonian  works only for a region  close to solutions to the 'downhill' macroscopic RRE.  On the contrary, the 'uphill' transition path  is apparently a rare transition path in the large deviation regime that is not closed to solutions to the RRE.   \cite{Doering_Sargsyan_Sander_2005} also quantified the failure of the simple diffusion approximation via the Kramers-Moyal expansion  when studying  the extinction problem for stochastic population model, which is also an exit problem in the large deviation regime.

Below,  {\blue we follow the standard procedure for  achieving the fluctuation-dissipation relation to   construct a   diffusion approximation such that (i) the diffusion model satisfies a fluctuation-dissipation relation and yields the same energy landscape as the original chemical reaction process; (ii) the corresponding quadratic Hamiltonian has the same symmetric property; (iii) the diffusion approximation valid near both the 'downhill' RRE solution and the 'uphill' most probable path.  }

 Under symmetric Hamiltonian condition \eqref{newS}, recall the strong gradient flow in terms of energy landscape 
$$\frac{\ud}{\ud t} \vx  = -   K(\vx) {\nabla \psi^{ss}(\vx)},$$
$K(\vx) = \frac12\int_0^1 \nabla^2_{pp} H(\theta \nabla \psi^{ss}(\vx)) \ud \theta$.
Following the standard technique for achieving the fluctuation-dissipation relation, we use the backward Ito's integral to construct a drift-diffusion process 
\begin{equation}\label{df11}
\ud  \vx  =    K(\vx) {\nabla \psi^{ss}(\vx)}   \ud t + \sqrt{\frac{2}{V} K} \, \widehat{\ud} B,
\end{equation}
where $\widehat{\ud} B$ means the multiplicative noise in the backward Ito's integral sense \cite{kunitha1982backward}. In the standard forward Ito's integral sense, this reads
\begin{equation}\label{528}
\ud  \vx  = -   K(\vx) {\nabla \psi^{ss}(\vx)} \ud t + \frac{1}{V}\nabla \cdot K \ud t + \sqrt{\frac{2}{V} K} \ud B.
\end{equation}
Then the Fokker-Planck equation
is
 $$\frac{\pt \rho}{\pt t} = \frac1V \nabla \cdot \bbs{e^{-V \psi^{ss} } K \nabla \bbs{\rho e^{V\psi^{ss}}} }.$$
(i)  This equation   has an  invariant measure $\pi=e^{-V \psi^{ss} }$; (ii) The invariant measure satisfies the detailed balance condition, since the Fokker-Planck operator $\nabla \cdot \bbs{\pi K \nabla \bbs{\frac{\rho}{\pi}} }$ is self-adjoint in $L^2(\pi^{-1})$; (iii) For any convex function $\phi(x)$, the dissipation relation holds
\begin{equation}
\frac{\ud}{\ud t} \int \pi \phi\bbs{\frac{\rho}{\pi}} \ud x = \la \phi'\bbs{\frac{\rho}{\pi}}, \pt_t \rho \ra = -\frac{1}{V} \la K \nabla  \frac{\rho}{\pi}\, , \, \phi''\bbs{\frac{\rho}{\pi}}\nabla \frac{\rho}{\pi} \ra\leq 0.
\end{equation}
 Here the invariant measure  yields exactly the same energy landscape as the original chemical large number process $\psi^{ss}=-\frac{\log \pi}{V}$.
 The corresponding quadratic Hamiltonian is symmetric w.r.t. $\nabla\psi^{ss}(\vx)$
\begin{equation}
H(\vp, \vx) = \bbs{\vp -\nabla \psi^{ss}(\vx)} \cdot K \vp = H({\nabla \psi^{ss}(\vx)}-\vp, \vx).
\end{equation}
We point out this diffusion approximation \eqref{df11} has a covariance $\int_0^1 \nabla^2_{pp} H(\theta \nabla \psi^{ss}(\vx), \vx) \ud \theta$ but the diffusion approximation using chemical Langevin equation \eqref{clr} has a different covariance $ \nabla^2_{pp} H(0,\vx)  $   is different from the previous diffusion approximation \eqref{clr} near RRE at the central limit regime. {\blue We remark the diffusion approximation \eqref{df11} satisfying fluctuation-dissipation relation is also used in \cite{Gingrich_Horowitz_Perunov_England_2016, Polettini_Lazarescu_Esposito_2016, Horowitz_Gingrich_2017} to study the stochastic uncertainty relation for a general process.}

\section{Discussion}
  In this paper, we revisit the macroscopic dynamics for some non-equilibrium chemical reactions from a Hamiltonian viewpoint. The concentration of chemical species  is modeled by the nonlinear RRE system, which is the thermodynamic limiting equation from the law of large numbers for   the random time-changed Poisson representation of chemical reactions. The Hamiltonian defined from the WKB expansion determines a HJE, and the minimizer of the dynamic solution recovers the solution to the RRE. 
  The stationary solution $\psi^{ss}$ to HJE serves as the   energy landscape for general non-equilibrium reactions. The existence of $\psi^{ss}$ is represented as an optimal control problem in an undefined time horizon, which can be represented as a weak KAM solution to HJE. More importantly, we use $\psi^{ss}$ to decompose RRE into a conservative part and dissipative part, which, together with the additional mass conservation law, gives raise a GENERIC formalism for RRE. Through $\psi^{ss}$, the thermodynamics for non-equilibrium reactions can also be decomposed as   nonadiabatic and  adiabatic  parts, where the later one maintains a positive entropy production rate at NESS. 
  We then study the energy dissipation relation at both mesoscopic and macroscopic levels and prove the passage from the mesoscopic one to the other. A non-convex energy landscape $\psi^{ss}$ emerges from the convex mesoscopic relative entropy functional $\KL(\rho_{\V}||\pi_{\V})$ in the large number limit, which picks up the non-equilibrium features.
  {\blue This mean-field limit passage also applies to the symmetric property in a chemical reaction. Particularly, the mesoscopic Markov chain detailed balance leads to a symmetric Hamiltonian, while the Markov chain detailed
 balance is  not  equivalent to the more constrained chemical version of detailed balance. The non-convexity of the macroscopic energy landscape $\psi^{ss}$, naturally brought by a grouped polynomial probability flux, enables us to study   a class of non-equilibrium chemical reaction with multiple steady states, for instance  the bistable Schl\"ogl model. However, we point out multiple steady states and non-convex energy are also common in other equilibrium statistical physics  such as  the Lagenvin dynamics with non-convex potential and Ising model of ferromagnetism.} 
   We then focus on finding transition paths between coexistent stable steady states in some non-equilibrium biochemical reactions using a symmetric Hamiltonian w.r.t. the stationary solution $\nabla\psi^{ss}$.  Under this symmetric condition, the transition path is explicitly given by piecewise least action curves, where the 'uphill' curve is   a $\nabla\psi^{ss}$-modified time reversal of the 'downhill' least action curve, where $\psi^{ss}$ also gives the energy barriers and path affinities. 
 The bistability and bifurcation in Schl\"ogl's model appear in many general forms, such as the Stuart-Landau equation for general sustained   nonlinear oscillating system with application for the Belousov–Zhabotinsky reaction.  When including spatial variation in the reaction-diffusion equation for spontaneous spatial pattern formation, the double well bistability generates the Turing pattern while the Fisher-KPP bistability generates traveling waves.   We also study  a quadratic approximation for the Hamiltonian near the   RRE solution, i.e., the mean path in the sense of the law of large numbers. However,  we point out the transition path problem in chemical reaction is in the  large deviation regime and the associated energy barrier can not be computed by a simple quadratic approximation. Instead, based on the strong form of gradient flow in terms of free energy $\psi^{ss}$, we construct anther drift-diffusion approximation which shares   the same symmetric Hamiltonian and energy barrier for the most probable path connecting two non-equilibrium steady states.

\section*{Acknowledgements}
The authors would like to thank Jin Feng and Hong Qian for valuable discussions and thank  Alexander Mielke, Mark Peletier and  Michiel Renger    for some insightful suggestions. Yuan Gao was supported by NSF under Award DMS-2204288. J.-G. Liu was supported   by NSF under award DMS-2106988. 

\section*{Data availability statement}
All data generated or analysed during this study are included in this published article.

\appendix

\section{Master equation and generator}\label{app:master}

\subsection{Master equation derivation}
We will only compute the generator for the portion of the forward reactions with the forward Poisson process $Y_j =Y^+_j$ in \eqref{CR}, because the backward portion is exactly same.    Consider 
\begin{equation}
\begin{aligned}
\vec{X}(t) = \vec{X}(0) + \sum_{j=1}^M \vec{\nu}_{j}  \mathbbm{1}_{\{\vec{X}(t_-) + \vec{\nu}_j \geq 0 \}}  Y^+_j \bbs{\int_0^t \varphi^+_j(X(s))\ud s}
\end{aligned}
\end{equation}
For any test function $f\in C_b$,  since   $R^+_j(t):=Y^+_j  \bbs{\int_0^t \varphi^+_j(\vec{X}(s))\ud s}$ is a counting process representing the $j$-th reaction, so
\begin{equation}\label{testX}
f(\vec{X}(t))=f(\vec{X}(0)) + \sum_{j=1}^M  \int_0^t \mathbbm{1}_{\{\vec{X}(s_-) + \vec{\nu}_j \geq 0 \}} \bbs{f(\vec{X}(s_-)+\vec{\nu}_{j}) - f(\vec{X}(s_-))} \ud R^+_j(s).
\end{equation}

From \cite[Thm 1.10]{Kurtz15},
\begin{equation}\label{Mg1}
M^+_j(t):= Y^+_j  \bbs{\int_0^t \varphi^+_j(\vec{X}(s))\ud s} - \int_0^t \varphi^+_j(\vec{X}(s))\ud s = R_j^+(t) - \int_0^t \varphi^+_j(\vec{X}(s))\ud s
\end{equation}
is a Martingale. Thus \eqref{testX} becomes
\begin{equation}\label{ito}
\begin{aligned}
f(\vec{X}(t))=&f(\vec{X}(0))  
+ \sum_{j=1}^M  \int_0^t \mathbbm{1}_{\{\vec{X}(s) + \vec{\nu}_j \geq 0 \}} \varphi^+_j( \vec{X}(s) )  \bbs{f(\vec{X}(s)+\vec{\nu}_{j}) - f(\vec{X}(s))}  \ud s\\
&+ \sum_{j=1}^M  \int_0^t \mathbbm{1}_{\{\vec{X}(s_-) + \vec{\nu}_j \geq 0 \}} \bbs{f(\vec{X}(s_-)+\vec{\nu}_{j}) - f(\vec{X}(s_-))} \ud M^+_j(s).
\end{aligned}
\end{equation}

Now we derive the master equation for $\vec{X}(t)\in \mathbb{N}^N$. Denote the (time marginal) law of $\vec{X}(t)$ as
\begin{align}
p(\vec{n}, t) = \bE\bbs{\mathbbm{1}_{\vec{X}(t)}(\vec{n})},
\end{align}
where $\mathbbm{1}$ is the indicator function.
For any $f: \bZ^N \to \bR$, 
$
f(\vec{X}) = \sum_{\vec{n}} f(\vec{n}) \mathbbm{1}_{\vec{X}}(\vec{n}),
$ and
\begin{equation}
\bE (f(\vec{X}) ) = \sum_{\vec{n}} f(\vec{n}) \bE(\mathbbm{1}_{\vec{X}}(\vec{n})) = \sum_{\vec{n}} f(\vec{n})p(\vec{n},t).
\end{equation}
Taking expectation for \eqref{ito}, we have the Dynkin's formula
\begin{equation}\label{Eito}
\begin{aligned}
\bE f(\vec{X}(t))=\bE f(\vec{X}(0)) 
+ \sum_{j=1}^M  \int_0^t \bE \bbs{\mathbbm{1}_{\{\vec{X}(s) + \vec{\nu}_j \geq 0 \}}\varphi^+_j( \vec{X}(s) )  \bbs{f(\vec{X}(s)+\vec{\nu}_{j}) - f(\vec{X}(s))} } \ud s.
\end{aligned}
\end{equation}
Taking derivative yields
\begin{equation}
\begin{aligned}
\frac{\ud }{\ud t}\sum_{\vec{n}}  f(\vec{n}) p(\vec{n}, t) = & \sum_{j=1}^M \sum_{\vec{n}\geq 0,\, \vec{n}+ \vec{\nu}_j \geq 0} \varphi^+_j(\vec{n}) \bbs{f(\vec{n} + \vec{\nu}_{j}) - f(\vec{n}) } p(\vec{n},t)\\
= &\sum_{j=1}^M \sum_{\vec{n}\geq 0,\, \vec{n}+ \vec{\nu}_j \geq 0} \varphi^+_j(\vec{n})  f(\vec{n} + \vec{\nu}_{j})   p(\vec{n},t) - \sum_{j=1}^M \sum_{\vec{n}\geq 0,\, \vec{n}+ \vec{\nu}_j \geq 0} \varphi^+_j(\vec{n})  f(\vec{n} )   p(\vec{n},t)\\
=&  \sum_{j=1}^M \sum_{\vec{n}\geq 0,\, \vec{n}- \vec{\nu}_j \geq 0}  \varphi^+_j(\vec{n}-\vec{\nu}_{j})  f(\vec{n})  p(\vec{n}-\vec{\nu}_{j},t) -  \sum_{j=1}^M \sum_{\vec{n}\geq 0,\, \vec{n}+ \vec{\nu}_j \geq 0} \varphi^+_j(\vec{n})  f(\vec{n})  p(\vec{n},t)\\
=&\sum_{\vec{n}\geq 0  }f(\vec{n}) \bbs{ \sum_{j=1, \, \vec{n}- \vec{\nu}_j \geq 0}^M   \varphi^+_j(\vec{n}-\vec{\nu}_{j})    p(\vec{n}-\vec{\nu}_{j},t) -  \sum_{j=1,\, \vec{n}+ \vec{\nu}_j \geq 0}^M   \varphi^+_j(\vec{n})     p(\vec{n},t)}.
\end{aligned}
\end{equation}
Then the master equation for $p(\vec{n},t)$ is
\begin{equation}
\frac{\ud }{\ud t} p(\vec{n}, t)  =   \sum_{j=1, \, \vec{n}- \vec{\nu}_j \geq 0}^M   \varphi^+_j(\vec{n}-\vec{\nu}_{j})    p(\vec{n}-\vec{\nu}_{j},t) -  \sum_{j=1,\, \vec{n}+ \vec{\nu}_j \geq 0}^M   \varphi^+_j(\vec{n})     p(\vec{n},t).
\end{equation}
After including the backward reactions with $Y^-$,
\begin{equation}
\begin{aligned}
\frac{\ud }{\ud t} p(\vec{n}, t)  =   \sum_{j=1, \, \vec{n}- \vec{\nu}_j \geq 0}^M   \varphi^+_j(\vec{n}-\vec{\nu}_{j})    p(\vec{n}-\vec{\nu}_{j},t) -  \sum_{j=1,\, \vec{n}+ \vec{\nu}_j \geq 0}^M   \varphi^+_j(\vec{n})     p(\vec{n},t)\\
+  \sum_{j=1, \, \vec{n}+ \vec{\nu}_j \geq 0}^M   \varphi^-_j(\vec{n}+\vec{\nu}_{j})    p(\vec{n}+\vec{\nu}_{j},t) -  \sum_{j=1,\, \vec{n}- \vec{\nu}_j \geq 0}^M   \varphi^+_j(\vec{n})     p(\vec{n},t)
\end{aligned}
\end{equation}
Therefore, for the chemical reaction described by \eqref{CR}, the master equation is
\begin{equation}
\begin{aligned}
\frac{\ud }{\ud t} p(\vec{n}, t)  = & \sum_{j=1,  \, \vec{n}- \vec{\nu}_j \geq 0}^M   \bbs{\varphi^+_j(\vec{n}-\vec{\nu}_{j})     p(\vec{n}-\vec{\nu}_{j},t) - \varphi_j^-(\vec{n})   p(\vec{n},t) }  \\
&+   \sum_{j=1,  \, \vec{n}+ \vec{\nu}_j \geq 0}^M \bbs{  \varphi^-_j(\vec{n}+\vec{\nu}_{j})     p(\vec{n}+\vec{\nu}_{j},t)  - \varphi^+_j(\vec{n})    p(\vec{n},t)}.
\end{aligned}
\end{equation}

Similarly, one can derive the master equation for the rescaled large number jumping process $\cv(t)$.

We   only compute the generator for the portion of the forward reactions.
Notice $R^+_j(t)=Y_j  \bbs{V\int_0^t \tilde{\Phi}^+_j(\cv (s))\ud s}$ is a counting process and 
\begin{equation}\label{Mg2}
M^+_j(t) = \frac{1}{V} Y^+_j  \bbs{V \int_0^t \tilde{\Phi}^+_j(\cv (s))\ud s} - \int_0^t \tilde{\Phi}^+_j(\cv (s))\ud s
\end{equation}
is a martingale. Similar to \eqref{ito}, we obtain 
for any $f\in C_b$,  
\begin{equation}\label{itoLL}
\begin{aligned}
f(\cv(t))=&f(\cv(0)) + \sum_{j=1}^M  \int_0^t \mathbbm{1}_{\{\cv(s_-)+\frac{\vec{\nu}_{j}}{V} \geq 0 \}} \bbs{f(\cv (s_-)+\frac{\vec{\nu}_{j}}{V}) - f(\cv (s_-))} \ud R^+_j(s)\\ 
=&f(\cv (0))  
+ \sum_{j=1}^M  \int_0^t V \mathbbm{1}_{\{\cv (s )+\frac{\vec{\nu}_{j}}{V} \geq 0 \}}   \tilde{\Phi}_j( \cv (s) )  \bbs{f(\cv (s)+\frac{\vec{\nu}_{j}}{V}) - f(\cv (s))}  \ud s\\
&+ \sum_{j=1}^M  \int_0^t V \mathbbm{1}_{\{\cv (s_-)+\frac{\vec{\nu}_{j}}{V} \geq 0 \}} \bbs{f(\cv (s_-)+\frac{\vec{\nu}_{j}}{V}) - f(\cv (s_-))} \ud M^+_j(s).
\end{aligned}
\end{equation}
Then using 
$\bE f(\cv (t)) =\frac1V\sum_{\vxv} f(\vxv) p(\vxv, t),$ we obtain the generator $Q_{\V}$ for the large number process $\cv (t)$ for fixed $V$
\begin{equation}\label{p_eq}
\begin{aligned}
 \frac{\ud}{\ud t}\sum_{\vxv}f(\vxv) p(\vxv, t)   =&V\sum_{\vxv\geq 0}  \Big[ \sum_{j=1, \, \vxv+\frac{\vec{\nu_{j}}}{V}\geq 0}^M \tilde{\Phi}^+_j(\vxv)\bbs{ f(\vxv+\frac{\vec{\nu_{j}}}{V}) - f(\vxv)} p(\vxv, t)   \\
&+  \sum_{j=1, \, \vxv-\frac{\vec{\nu_{j}}}{V}\geq 0}^M  \tilde{\Phi}_j^-(\vxv)\bbs{ f(\vxv-\frac{\vec{\nu_{j}}}{V}) - f(\vxv)} p(\vxv, t)   \Big]=:  \sum_{\vxv} (Q_{\V} f)(\vxv)   p(\vxv,t).
\end{aligned}
\end{equation}
Here in the definition of generator, one can define a zero extension for the region outside $\vxv\geq 0$.

\section{Mean-field limit RRE for $\cv$}\label{app_meanfield}
 Since the original proof for the mean filed equation of chemical reaction $\cv$  in \cite{Kurtz71, Kurtz15} omitted the `no reaction' constraint outside nonnegative region, so we provide a pedagogical  proof after including the constraint $\vxv\pm \frac{\vec{\nu}_j}{V}\geq 0$.


Assume there exists a   solution $\vxv(\cdot)\in C^1([0,T]; \mathbb{R}^N_+)$ to RRE \eqref{odex} and $x_i(t)>0$ for all $t\in[0,T]$ and each component $i$.
Recall $\vec{R}(\vx)= \sum_{j=1}^M \vec{\nu}_j \bbs{\Phi_j^+(\vx) - \Phi_j^-(\vx)}$   defined in \eqref{RRR}.

Fix $a_0>0$ such that the RRE solution tube $\Omega_{a_0}:=\{\vy;\, \max_{t\in[0,T]} |\vy - \vx(t)|< a_0 \} \subset \mathbb{R}^N_+$. For any $0<a<a_0$,  since $\vec{R}$ is locally Lipschitz, there exists $K_a$ such that
$|\vec{R}(\vx)-\vec{R}(\vy)|\leq K_a |\vx-\vy|$ for $\vx, \vy \in \Omega_{a}$. Then we define a stopping time
\begin{equation}
\tau_{\V,a} = \inf\{ t; |\cv(t)-\vx(t)| > a \}.
\end{equation}
Then for $t\leq \tau_{\V,a}$, $\cv\pm \frac{\vec{\nu}_j}{V}\subset \Omega_{a_0} \subset \mathbb{R}^N_+$ for $V$ large enough, so the `no reaction' constraint in process \eqref{Csde} does not turn on before $\tau_{\V,a}$.  Thus 
 from the martingale decomposition \eqref{Mg2}, by Doob's continuous time optional stopping lemma, 
\begin{equation}
\begin{aligned}
M_{\V}(t\wedge \tau_{\V,a}) :=  &\sum_j    { \vec{\nu}_j}\frac{1}{V}  \bbs{Y^+_j  \bbs{V\int_0^{t\wedge \tau_{\V,a}}  \Phi^+_j(\cv(s))\ud s} + Y^-_j  \bbs{V\int_0^{t\wedge \tau_{\V,a}}  \Phi^-_j(\cv(s))\ud s}}\\
& - \sum_j\vec{\nu}_j\int_0^{t\wedge \tau_{\V,a}}  \Phi^+_j(\cv(s))+ \Phi^-_j(\cv(s))\ud s
\end{aligned}
\end{equation}
is a martingale, and thus
\begin{equation}
\begin{aligned}
 \cv(t\wedge \tau_{\V,a})   =&  \cv(0) - \vx(0) + M_{\V}(t\wedge \tau_{\V,a}) + \int_0^{t\wedge \tau_{\V,a}}  \vec{R}(\cv(s))   \ud s.  
\end{aligned}
\end{equation}
 Here for simplicity, we assume the mesoscopic and macroscopic LMA are same $\tilde{\Phi}^\pm_j(\vx) = \Phi^\pm_j(\vx)$ and then drop tilde.
 Compare the trajectory of SDE \eqref{Csde} with solution to RRE \eqref{odex}
\begin{equation}
\begin{aligned}
 \cv(t\wedge \tau_{\V,a}) - \vx(t\wedge \tau_{\V,a})  =&  \cv(0) - \vx(0) + M_{\V}(t\wedge \tau_{\V,a}) + \int_0^{t\wedge \tau_{\V,a}} [\vec{R}(\cv(s)) - \vec{R}(\vx(s))] \ud s. 
\end{aligned}
\end{equation}

 Thus
\begin{equation}
\begin{aligned}
|\cv(t\wedge \tau_{\V,a}) - \vx(t\wedge \tau_{\V,a}) | 
 \leq& |\cv(0) - \vx(0) |+ |M_{\V}(t\wedge \tau_{\V,a})| + K_a \int_0^{t\wedge \tau_{\V,a}} |\cv(s)-\vx(s)|  \ud s|.
\end{aligned}
\end{equation}
Then by Gronwall's inequality, we have
\begin{equation}\label{B5}
\begin{aligned}
|\cv(t\wedge \tau_{\V,a}) - \vx(t\wedge \tau_{\V,a}) |& \leq \bbs{|\cv(0) - \vx(0) + \sup_{1\leq s\leq t\wedge \tau_{\V,a}}|M_{\V}(s)|}e^{K_a t\wedge \tau_{\V,a} }.
\end{aligned}
\end{equation}
Notice that the process $\cv$ is right continuous, then by definition of $\tau_{\V,a}$, we have
\begin{equation}
\begin{aligned}
\{ \max_{0\leq s\leq t} |\cv(s) - \vx(s)| >a \} &\subset \{  |\cv(t\wedge \tau_{\V,a}) - \vx(t\wedge \tau_{\V,a})| \geq a \}. 
\end{aligned}
\end{equation}
Then by \eqref{B5},
\begin{equation}
\begin{aligned}
\{ \max_{0\leq s\leq t} |\cv(s) - \vx(s)| >a \} 
& \subset   \{ |\cv(0) - \vx(0) + \sup_{1\leq s\leq t\wedge \tau_{\V,a} }|M_{\V}(s)| \geq a e^{-K_a t } \}\\
& \subset \{ |\cv(0) - \vx(0) | \geq \frac{a}{2} e^{-K_a t } \} \cup \{ \sup_{1\leq s\leq t\wedge \tau_{\V,a} }|M_{\V}(s)|^2 \geq \frac{a^2}{4} e^{-2K_a t } \}.
\end{aligned}
\end{equation}
Then by Doob’s maximal inequality for submartingales, we know
\begin{equation}
\bP  \{ \sup_{1\leq s\leq t\wedge \tau_{\V,a} }|M_{\V}(s)|^2 \geq \frac{a^2}{4} e^{-2K_a t }   \} \leq \frac{4e^{2K_a t}}{a^2}{\bE(|M_{\V}(t\wedge \tau_{\V,a})|^2)}  
\end{equation}
Using the estimate of martingale $M_{\V}$
$$\bE(|M_{\V}(t\wedge \tau_{\V,a})|^2) =  \sum_{j=1}^M \frac{|\vec{\nu}_j|^2}{V^2}  \bE \bbs{ V\int_0^{t\wedge \tau_{\V,a}}  [\Phi_j^+(\cv(s)) + \Phi_j^-(\cv(s))]  \ud s }\leq C_a \frac{1}{V},$$
 we know
\begin{equation}
\begin{aligned}
\bP \{ \max_{0\leq s\leq t} |\cv(s) - \vx(s)| >a \} &\leq \bP \{ |\cv(0) - \vx(0) | \geq \frac{a}{2} e^{-K_a t } \} + \frac{4e^{2K_a t}}{a^2}{\bE(|M_{\V}(t\wedge \tau_{\V,a})|^2)}\\
&\leq \bP \{ |\cv(0) - \vx(0) | \geq \frac{a}{2} e^{-K_a t } \} + C_a \frac{1}{V}.
\end{aligned}
\end{equation}
Then for arbitrarily small $a$, we conclude that
if $\cv(0) \to \vx(0) $,   \eqref{lln} holds, i.e.,
\begin{equation}
\lim_{V\to +\8} \bP \{ \max_{0\leq s\leq t} |\cv(s) - \vx(s)| >a \} = 0.
\end{equation}

\section{Proof of Lemma \ref{lem_cDB}}\label{app_lem_cDB}
We give the proof of Lemma \ref{lem_cDB} by  some elementary computations and collecting existing results.
 \begin{proof}[Proof of Lemma \ref{lem_cDB}]
Step 1. We prove the equivalence between (i) and (ii). 

Plugging   identity \eqref{complexIND}, we obtain the identity
\begin{equation}\label{tm216}
\begin{aligned}
H( \log \frac{\vec{x}}{\peq},\vx)=& \sum_j \bbs{ \Phi^-_j(\vx) \bbs{\frac{\Phi^+_j(\peq)}{\Phi_j^-(\peq)} -1} + \Phi_j^+ (\vx) \bbs{\frac{\Phi^-_j(\peq)}{\Phi_j^+(\peq)} -1} }\\
= & \sum_j \bbs{  \frac{\Phi^-_j(\vx)}{\Phi_j^-(\peq)}  \bbs{  {\Phi^+_j(\peq)}-{\Phi_j^-(\peq)} } + \frac{\Phi_j^+ (\vx)}{\Phi^+_j (\peq)} \bbs{{\Phi^-_j(\peq)}-{\Phi_j^+(\peq)} } }\\
= & \sum_j   \bbs{\frac{\vx}{\peq}}^{\vec{\nu}_j^-} \bbs{  {\Phi^+_j(\peq)}-{\Phi_j^-(\peq)}} + \sum_j \bbs{\frac{\vx}{\peq}}^{\vec{\nu}_j^+} \bbs{{\Phi^-_j(\peq)}-{\Phi_j^+(\peq)} } .
\end{aligned}
\end{equation}

 Similar to \eqref{Kflux}, rearranging according to the reactant complex $\vec{\eta}\in \mathcal{C}$, we have
\begin{equation}\label{tm217}
\begin{aligned}
 H( \log \frac{\vec{x}}{\peq}, \vx)=\sum_{\vec{\eta}\in\mathcal{C}}  \bbs{\frac{\vx}{\peq}}^{\vec{\eta}}   \bbs{ \sum_{j: \vec{\nu}_j^-=\vec{\eta}}\bbs{  {\Phi^+_j(\peq)}-{\Phi_j^-(\peq)} } + \sum_{j: \vec{\nu}_j^+=\vec{\eta}} \bbs{{\Phi^-_j(\peq)}-{\Phi_j^+(\peq)} } }.
\end{aligned}
\end{equation}
Then  complex  balance \eqref{cDB} is equivalent to $H( \log \frac{\vec{x}}{\peq}, \vx)=0.$

 Step 2, assume (i), i.e., $\peq$ satisfies complex  balance condition \eqref{cDB}, then one can construct  a  stationary distribution $\pi_V $ via the product of Poisson distributions with intensity $V \peq$  \cite[Theorem 3.7]{Kurtz15} {(see also \cite{Anderson_Craciun_Kurtz_2010})}
\begin{equation}\label{piv}
\log \pi_V(\vxv) = \sum_{i=1}^{N} \Big( n_i \log(V x^s_i) - \log (n_i !) - V x^s_i \Big), \quad \vec{n}:= V \vxv
\end{equation}
 for the chemical master equation \eqref{rp_eq} with $\tilde{\Phi}= {\varphi}/V$ for  a fixed volume $V$. Thus (i) implies (iii).

Step 3, assume (iii), for any $\vx\in \bR^N_+$, let $\vx_V=\frac{\vec{n}}{V}\to \vx$ as $V\to +\8$,   then the limit in WKB approximation for $\pi_V$ exists
\begin{equation}
  \lim_{V\to +\8} \frac{- \log \pi_V(\vx_V)}{V} =\sum_{i=1}^N \bbs{ x_i \log x_i -x_i \log x^s_i   + x^s_i -x_i   }=\KL(\vx||\peq)= \psi^{ss}(\vx).
\end{equation}
Indeed, changing to variable $\vx_V=\frac{\vec{n}}{V}$ and using the Stirling's formula,  we have
 \begin{equation}
 \begin{aligned}
 \frac{\log \pi_V(\vx_V)}{V} =& \frac{1}{V}   \sum_{i=1}^N \bbs{ n_i \log(V x^s_i) - n_i\log (n_i) + n_i - V x^s_i +O(\log n_i)  } \\
 =& \sum_{i=1}^N \bbs{ x_i \log x^s_i   - x_i \log x_i + x_i -x^s_i   } + \frac{O(\sum_i^N \log n_i)}{V}.
 \end{aligned}
 \end{equation}
 Then for any fixed  $\vx\in \bR^N$, $\vx_V=\frac{\vec{n}}{V}\to \vx$ implies $\frac{O(\sum_i^N \log n_i)}{V}\to 0$ as $V\to+\8.$
Thus (ii) follows.  
\end{proof}

\section{Phosphorylation-dephosphorylation with 2-autocatalysis}\label{app_2d}
  The Schl\"ogl model can be regarded as a simple but representative example which keeps the main features of non-equilibrium enzyme reactions.  As the one of the most important enzyme reaction in a single living cell, the phosphorylation-dephosphorylation reaction system (c.f. \cite{qian2007phosphorylation})  also fits into the symmetric Hamiltonian framework.

Here we  briefly revisit the phosphorylation-dephosphorylation with 2-autocatalysis proposed by  Fischer-Krebs in 50's.
\begin{equation}
\begin{aligned}
\ce{E + ATP + {K^*}} \,  \ce{<=>[k^+_1][k^-_1]} \,    \ce{E^* + ADP + {K^*}}, 
\quad  
\ce{ E^* + {P}}   \, \ce{<=>[k^+_2][k^-_2]} \, \ce{ E + Pi + {P}},
\quad 
\ce{{K} + 2E^*} \, \ce{<=>[k^+_3][k^-_3]} \,    \ce{{K^*}}. 
\end{aligned}
\end{equation}
Here the concentration of a protein in its open state (the phosphorylated \ce{E}) is denoted as  $x(t) = \ce{[E^*]}$ while the  concentration of a protein in its close state is  $y(t) = \ce{[E]}$.
The third reaction equation representing the reversible binding is rapid and thus  is assumed to be quasi-static. Under this quasi-static assumption, the active kinase \ce{K^*} in the first reaction equation has a positive feedback from 2\ce{E^*}, which is known as 2-autocatalysis.  We also regard the concentrations of the inactive kinase \ce{K}, phosphatase \ce{P}, adenosine triphosphate \ce{ATP},  adenosine diphosphate \ce{ADP} and phosphate group \ce{Pi} as constant that sustained by environment.
 
From
$\dot{y} = - \dot{x}$, we know the conservation of total mass of two proteins and thus $y(t) = \ce{[E_{tot}] - x(t)}$. The RRE is given by
\begin{equation}\label{PP}
\dot{x} = (a^+_1\ce{[K]} x^2y + a^-_2y )
- ( a^-_1 \ce{[K]} x^3 + a^+_2x ) =: [\Phi_1^+(x) + \Phi_2^+(x)] - [\Phi_1^-(x)+ \Phi_2^-(x)]
\end{equation}
where we lumped chemostats into rates and used quasi-static relation:
$$
a^+_1=k^+_1\ce{[ATP]} \frac{k^+_3}{k^-_3}, \quad
a^-_1=k^-_1\ce{[ADP]} \frac{k^+_3}{k^-_3}, \quad
a^+_2=k^-+2\ce{[P]}, \quad
a^-_2=k^-_2\ce{[Pi][P]}. 
$$
We take \ce{[K]} as a bifurcation parameter for the first order phase transition. The right-hand-side of \eqref{PP} is a double well potential raising from the flux grouping property. This 2-autocatalysis model is basically same as the Schl\"ogl model after effectively eliminating the quasi-static third reaction equation, so the mathematical analysis are same.

\bibliographystyle{alpha}
\bibliography{LDbib}

\newcommand{\etalchar}[1]{$^{#1}$}
\begin{thebibliography}{BDSG{\etalchar{+}}15}

\bibitem[ACK10]{Anderson_Craciun_Kurtz_2010}
David~F. Anderson, Gheorghe Craciun, and Thomas~G. Kurtz.
\newblock Product-form stationary distributions for deficiency zero chemical
  reaction networks.
\newblock {\em Bulletin of Mathematical Biology}, 72(8):1947–1970, Nov 2010.

\bibitem[ADE18]{Dembo18}
Andrea Agazzi, Amir Dembo, and Jean-Pierre Eckmann.
\newblock Large deviations theory for markov jump models of chemical reaction
  networks.
\newblock {\em The Annals of Applied Probability}, 28(3):1821--1855, 2018.

\bibitem[AHLW19]{anderson2019constrained}
David~F Anderson, Desmond~J Higham, Saul~C Leite, and Ruth~J Williams.
\newblock On constrained langevin equations and (bio) chemical reaction
  networks.
\newblock {\em Multiscale Modeling \& Simulation}, 17(1):1--30, 2019.

\bibitem[AK15]{Kurtz15}
David~F. Anderson and Thomas~G. Kurtz.
\newblock {\em Stochastic Analysis of Biochemical Systems}.
\newblock Springer International Publishing, 2015.

\bibitem[AM17]{Assaf_Meerson_2017}
Michael Assaf and Baruch Meerson.
\newblock Wkb theory of large deviations in stochastic populations.
\newblock {\em Journal of Physics A: Mathematical and Theoretical},
  50(26):263001, Jun 2017.

\bibitem[And08]{anderson2008global}
David~F Anderson.
\newblock Global asymptotic stability for a class of nonlinear chemical
  equations.
\newblock {\em SIAM Journal on Applied Mathematics}, 68(5):1464--1476, 2008.

\bibitem[BDSG{\etalchar{+}}02]{bertini2002macroscopic}
Lorenzo Bertini, Alberto De~Sole, Davide Gabrielli, Giovanni Jona-Lasinio, and
  Claudio Landim.
\newblock Macroscopic fluctuation theory for stationary non-equilibrium states.
\newblock {\em Journal of Statistical Physics}, 107(3):635--675, 2002.

\bibitem[BDSG{\etalchar{+}}15]{Bertini15}
Lorenzo Bertini, Alberto De~Sole, Davide Gabrielli, Giovanni Jona-Lasinio, and
  Claudio Landim.
\newblock Macroscopic fluctuation theory.
\newblock {\em Reviews of Modern Physics}, 87(2):593–636, Jun 2015.

\bibitem[Bie54]{bierman1954studies}
Arthur Bierman.
\newblock Studies on the effects of structure on the behavior of enzyme
  systems.
\newblock {\em The bulletin of mathematical biophysics}, 16(3):203--257, 1954.

\bibitem[BJ90]{Barron_Jensen_1990}
E.~N. Barron and R.~Jensen.
\newblock Semicontinuous viscosity solutions for hamilton–jacobi equations
  with convex hamiltonians.
\newblock {\em Communications in Partial Differential Equations},
  15(12):293–309, Jan 1990.

\bibitem[Bry90]{Bryc_1990}
Wlodzimierz Bryc.
\newblock {\em Large Deviations by the Asymptotic Value Method}, page
  447–472.
\newblock Birkhäuser Boston, Boston, MA, 1990.

\bibitem[CI99]{contreras1999global}
Gonzalo Contreras and Renato Iturriaga.
\newblock Global minimizers of autonomous lagrangians.
\newblock {\em IMPA Rio de Janeiro}, 22nd Brazilian Mathematics Colloquium
  1999.

\bibitem[DMRH94]{Dykman_Mori_Ross_Hunt_1994}
M.~I. Dykman, Eugenia Mori, John Ross, and P.~M. Hunt.
\newblock Large fluctuations and optimal paths in chemical kinetics.
\newblock {\em The Journal of Chemical Physics}, 100(8):5735–5750, Apr 1994.

\bibitem[Doi76]{Doi_1976}
M~Doi.
\newblock Second quantization representation for classical many-particle
  system.
\newblock {\em Journal of Physics A: Mathematical and General},
  9(9):1465–1477, Sep 1976.

\bibitem[DSS05]{Doering_Sargsyan_Sander_2005}
Charles~R. Doering, Khachik~V. Sargsyan, and Leonard~M. Sander.
\newblock Extinction times for birth-death processes: Exact results, continuum
  asymptotics, and the failure of the fokker--planck approximation.
\newblock {\em Multiscale Modeling \& Simulation}, 3(2):283–299, Jan 2005.

\bibitem[ERVE02]{string}
Weinan E, Weiqing Ren, and Eric Vanden-Eijnden.
\newblock String method for the study of rare events.
\newblock {\em Phys. Rev. B}, 66(5):052301, 2002.

\bibitem[ES02]{evans2002fluctuation}
Denis~J Evans and Debra~J Searles.
\newblock The fluctuation theorem.
\newblock {\em Advances in Physics}, 51(7):1529--1585, 2002.

\bibitem[EVE06]{weinan2006}
Weinan E and Eric Vanden-Eijnden.
\newblock Towards a theory of transition paths.
\newblock {\em J. Stat. Phys.}, 123(3):503, 2006.

\bibitem[Fei72]{feinberg1972chemical}
Martin Feinberg.
\newblock On chemical kinetics of a certain class.
\newblock {\em Archive for Rational Mechanics and Analysis}, 46(1):1--41, 1972.

\bibitem[Fei19]{Feinberg_2019}
Martin Feinberg.
\newblock {\em Foundations of Chemical Reaction Network Theory}, volume 202 of
  {\em Applied Mathematical Sciences}.
\newblock Springer International Publishing, 2019.

\bibitem[FK06]{feng2006large}
Jin Feng and Thomas~G Kurtz.
\newblock {\em Large deviations for stochastic processes}.
\newblock Number 131. American Mathematical Soc., 2006.

\bibitem[Fle83]{fleming1983optimal}
Wendell~H Fleming.
\newblock Optimal control of markov processes.
\newblock Technical report, BROWN UNIV PROVIDENCE RI LEFSCHETZ CENTER FOR
  DYNAMICAL SYSTEMS, 1983.

\bibitem[FS06]{fleming06}
Wendell~H Fleming and Halil~Mete Soner.
\newblock {\em Controlled Markov processes and viscosity solutions}.
\newblock Springer Science \& Business Media, New York, 2nd edition, 2006.

\bibitem[FW12]{Freidlin_Wentzell_2012}
Mark~I. Freidlin and Alexander~D. Wentzell.
\newblock {\em Random Perturbations of Dynamical Systems}, volume 260 of {\em
  Grundlehren der mathematischen Wissenschaften}.
\newblock Springer Berlin Heidelberg, 2012.

\bibitem[Gan87]{Hu87}
Hu~Gang.
\newblock Stationary solution of master equations in the large-system-size
  limit.
\newblock {\em Physical Review A}, 36(12):5782–5790, Dec 1987.

\bibitem[GHPE16]{Gingrich_Horowitz_Perunov_England_2016}
Todd~R. Gingrich, Jordan~M. Horowitz, Nikolay Perunov, and Jeremy~L. England.
\newblock Dissipation bounds all steady-state current fluctuations.
\newblock {\em Physical Review Letters}, 116(12):120601, Mar 2016.

\bibitem[Gil00]{GG00}
Daniel~T. Gillespie.
\newblock The chemical langevin equation.
\newblock {\em The Journal of Chemical Physics}, 113(1):297--306, 2000.

\bibitem[GL20]{gao2020note}
Yuan Gao and Jian-Guo Liu.
\newblock A note on parametric bayesian inference via gradient flows.
\newblock {\em Annals of Mathematical Sciences and Applications},
  5(2):261--282, 2020.

\bibitem[GL21]{gao2021random}
Yuan Gao and Jian-Guo Liu.
\newblock Random walk approximation for irreversible drift-diffusion process on
  manifold: ergodicity, unconditional stability and convergence.
\newblock {\em arXiv preprint arXiv:2106.01344}, 2021.

\bibitem[GL22a]{Gao_2022}
Yuan Gao and Jian-Guo Liu.
\newblock A selection principle for weak kam solutions via freidlin-wentzell
  large deviation principle of invariant measures.
\newblock {\em arXiv preprint arXiv:2208.11860}, 2022.

\bibitem[GL22b]{GL_vis}
Yuan Gao and Jian-Guo Liu.
\newblock Thermodynamic limit of chemical master equation via nonlinear
  semigroup.
\newblock {\em arXiv preprint arXiv:2205.09313}, 2022.

\bibitem[GLLL21]{GLLL20}
Yuan Gao, Tiejun Li, Xiaoguang Li, and Jian-Guo Liu.
\newblock Transition path theory for langevin dynamics on manifold: optimal
  control and data-driven solver.
\newblock {\em to appear in Multiscale Modeling \& Simulation,
  arXiv:2010.09988}, 2021.

\bibitem[GLW20]{gao2020data}
Yuan Gao, Jian-Guo Liu, and Nan Wu.
\newblock Data-driven efficient solvers for langevin dynamics on manifold in
  high dimensions.
\newblock {\em arXiv preprint arXiv:2005.12787}, 2020.

\bibitem[GQ17]{QianGe17}
Hao Ge and Hong Qian.
\newblock Mathematical formalism of nonequilibrium thermodynamics for nonlinear
  chemical reaction systems with general rate law.
\newblock {\em Journal of Statistical Physics}, 166(1):190--209, 2017.

\bibitem[GY14]{Gauckler14}
Ludwig Gauckler and Harry Yserentant.
\newblock Regularity and approximability of the solutions to the chemical
  master equation.
\newblock {\em ESAIM: Mathematical Modelling and Numerical Analysis -
  Mod\'elisation Math\'ematique et Analyse Num\'erique}, 48(6):1757--1775,
  2014.

\bibitem[HG17]{Horowitz_Gingrich_2017}
Jordan~M. Horowitz and Todd~R. Gingrich.
\newblock Proof of the finite-time thermodynamic uncertainty relation for
  steady-state currents.
\newblock {\em Physical Review E}, 96(2):020103, Aug 2017.

\bibitem[HGTT84]{Hanggi_Grabert_Talkner_Thomas_1984}
Peter Hanggi, Hermann Grabert, Peter Talkner, and Harry Thomas.
\newblock Bistable systems: Master equation versus fokker-planck modeling.
\newblock {\em Physical Review A}, 29(1):371–378, Jan 1984.

\bibitem[HJ72]{Horn72}
F.~Horn and R.~Jackson.
\newblock General mass action kinetics.
\newblock {\em Archive for Rational Mechanics and Analysis}, 47:81--116, 1972.

\bibitem[HM10]{Hairer_Majda_2010}
Martin Hairer and Andrew~J. Majda.
\newblock A simple framework to justify linear response theory.
\newblock {\em Nonlinearity}, 23(4):909–922, Apr 2010.
\newblock arXiv: 0909.4313.

\bibitem[IS20]{ishii2020vanishing}
Hitoshi Ishii and Antonio Siconolfi.
\newblock The vanishing discount problem for hamilton--jacobi equations in the
  euclidean space.
\newblock {\em Communications in Partial Differential Equations},
  45(6):525--560, 2020.

\bibitem[Jos15]{Joshi}
Badal Joshi.
\newblock A detailed balanced reaction network is sufficient but not necessary
  for its markov chain to be detailed balanced.
\newblock {\em Discrete \& Continuous Dynamical Systems - B}, 20(4):1077--1105,
  2015.

\bibitem[Kir35]{kirkwood1935statistical}
John~G Kirkwood.
\newblock Statistical mechanics of fluid mixtures.
\newblock {\em The Journal of chemical physics}, 3(5):300--313, 1935.

\bibitem[KLMP20]{Kraaij_Lazarescu_Maes_Peletier_2020}
Richard~C. Kraaij, Alexandre Lazarescu, Christian Maes, and Mark Peletier.
\newblock Fluctuation symmetry leads to generic equations with non-quadratic
  dissipation.
\newblock {\em Stochastic Processes and their Applications}, 130(1):139–170,
  Jan 2020.

\bibitem[KMK73]{Kubo73}
Ryogo Kubo, Kazuhiro Matsuo, and Kazuo Kitahara.
\newblock Fluctuation and relaxation of macrovariables.
\newblock {\em Journal of Statistical Physics}, 9(1):51–96, Sep 1973.

\bibitem[KP14]{kondepudi2014modern}
Dilip Kondepudi and Ilya Prigogine.
\newblock {\em Modern thermodynamics: from heat engines to dissipative
  structures}.
\newblock John Wiley \& Sons, 2014.

\bibitem[Kra16]{Kraaij_2016}
Richard Kraaij.
\newblock Large deviations for markov jump processes with mean-field
  interaction via the comparison principle for an associated hamilton-jacobi
  equation.
\newblock {\em Journal of Statistical Physics}, 164(2):321–345, Jul 2016.
\newblock arXiv:1502.05185 [cond-mat].

\bibitem[Kra20]{Kraaij_2020}
Richard~C. Kraaij.
\newblock The exponential resolvent of a markov process and large deviations
  for markov processes via hamilton-jacobi equations.
\newblock {\em Electronic Journal of Probability}, 25(none), Jan 2020.
\newblock arXiv:1910.03946 [math].

\bibitem[Kun82]{kunitha1982backward}
Hiroshi Kunitha.
\newblock On backward stochastic differential equations.
\newblock {\em Stochastics}, 6(3-4):293--313, 1982.

\bibitem[Kur71]{Kurtz71}
T.~G. Kurtz.
\newblock Limit theorems for sequences of jump markov processes approximating
  ordinary differential processes.
\newblock {\em Journal of Applied Probability}, 8(2):344--356, 1971.

\bibitem[Kur80]{kurtz1980representations}
Thomas~G Kurtz.
\newblock Representations of markov processes as multiparameter time changes.
\newblock {\em The Annals of Probability}, pages 682--715, 1980.

\bibitem[LCFE19]{Lazarescu_Cossetto_Falasco_Esposito_2019}
Alexandre Lazarescu, Tommaso Cossetto, Gianmaria Falasco, and Massimiliano
  Esposito.
\newblock Large deviations and dynamical phase transitions in stochastic
  chemical networks.
\newblock {\em The Journal of Chemical Physics}, 151(6):064117, Aug 2019.

\bibitem[LS99]{Spohn99}
Joel~L. Lebowitz and Herbert Spohn.
\newblock A gallavotti–cohen-type symmetry in the large deviation functional
  for stochastic dynamics.
\newblock {\em Journal of Statistical Physics}, 95(1):333–365, Apr 1999.

\bibitem[Mie16]{mielke2016evolutionary}
Alexander Mielke.
\newblock On evolutionary gamma-convergence for gradient systems.
\newblock In {\em Macroscopic and large scale phenomena: coarse graining, mean
  field limits and ergodicity}, pages 187--249. Springer, 2016.

\bibitem[MLR17]{Liero}
Mark A.~Peletier Matthias~Liero, Alexander~Mielke and D.~R.~Michiel Renger.
\newblock On microscopic origins of generalized gradient structures.
\newblock {\em Discrete \& Continuous Dynamical Systems - S}, 10(1):1--35,
  2017.

\bibitem[MM20]{maas2020modeling}
Jan Maas and Alexander Mielke.
\newblock Modeling of chemical reaction systems with detailed balance using
  gradient structures.
\newblock {\em Journal of Statistical Physics}, 181(6):2257--2303, 2020.

\bibitem[MRP14]{Mielke_Renger_Peletier_2014}
Alexander Mielke, D.~R.~Michiel Renger, and Mark~A. Peletier.
\newblock On the relation between gradient flows and the large-deviation
  principle, with applications to markov chains and diffusion.
\newblock {\em Potential Analysis}, 41(4):1293–1327, Nov 2014.

\bibitem[MS97]{mcquarrie1997physical}
Donald~Allan McQuarrie and John~Douglas Simon.
\newblock {\em Physical chemistry: a molecular approach}, volume~1.
\newblock University science books Sausalito, CA, 1997.

\bibitem[MSVE09]{MSV09}
Philipp Metzner, Christof Sch{\"u}tte, and Eric Vanden-Eijnden.
\newblock Transition path theory for markov jump processes.
\newblock {\em Multiscale Modeling \& Simulation}, 7(3):1192--1219, Jan 2009.

\bibitem[MTR54]{morpurgo1954time}
G~Morpurgo, BF~Touschek, and LA~Radicati.
\newblock On time reversal.
\newblock {\em II Nuovo Cimento (1943-1954)}, 12(5):677--698, 1954.

\bibitem[Ons31]{Onsager}
Lars Onsager.
\newblock Reciprocal relations in irreversible processes. i.
\newblock {\em Phys. Rev.}, 37:405--426, Feb 1931.

\bibitem[Pel85]{Peliti_1985}
L.~Peliti.
\newblock Path integral approach to birth-death processes on a lattice.
\newblock {\em Journal de Physique}, 46(9):1469–1483, 1985.

\bibitem[PLE16]{Polettini_Lazarescu_Esposito_2016}
Matteo Polettini, Alexandre Lazarescu, and Massimiliano Esposito.
\newblock Tightening the uncertainty principle for stochastic currents.
\newblock {\em Physical Review E}, 94(5):052104, Nov 2016.

\bibitem[PR19]{Patterson_Renger_2019}
Robert I.~A. Patterson and D.~R.~Michiel Renger.
\newblock Large deviations of jump process fluxes.
\newblock {\em Mathematical Physics, Analysis and Geometry}, 22(3):21, Sep
  2019.

\bibitem[Pri67]{prigogine1967introduction}
Ilya Prigogine.
\newblock Introduction to thermodynamics of irreversible processes.
\newblock {\em New York: Interscience}, 1967.

\bibitem[PRS21]{Patterson_Renger_Sharma_2021}
Robert I.~A. Patterson, D.~R.~Michiel Renger, and Upanshu Sharma.
\newblock Variational structures beyond gradient flows: a macroscopic
  fluctuation-theory perspective.
\newblock {\em arXiv:2103.14384 [math-ph]}, Mar 2021.
\newblock arXiv: 2103.14384.

\bibitem[PRV14]{Peletier_Redig_Vafayi_2014}
Mark~A. Peletier, Frank Redig, and Kiamars Vafayi.
\newblock Large deviations in stochastic heat-conduction processes provide a
  gradient-flow structure for heat conduction.
\newblock {\em Journal of Mathematical Physics}, 55(9):093301, Sep 2014.

\bibitem[QB05]{qian2005thermodynamics}
Hong Qian and Daniel~A Beard.
\newblock Thermodynamics of stoichiometric biochemical networks in living
  systems far from equilibrium.
\newblock {\em Biophysical chemistry}, 114(2-3):213--220, 2005.

\bibitem[QG21]{qian_book}
H.~Qian and H.~Ge.
\newblock {\em Stochastic Chemical Reaction Systems in Biology}.
\newblock Lecture Notes on Mathematical Modelling in the Life Sciences.
  Springer International Publishing, 2021.

\bibitem[Qia06]{qian2006open}
Hong Qian.
\newblock Open-system nonequilibrium steady state: statistical thermodynamics,
  fluctuations, and chemical oscillations, 2006.

\bibitem[Qia07]{qian2007phosphorylation}
Hong Qian.
\newblock Phosphorylation energy hypothesis: open chemical systems and their
  biological functions.
\newblock {\em Annu. Rev. Phys. Chem.}, 58:113--142, 2007.

\bibitem[RE16]{Rao_Esposito_2016}
Riccardo Rao and Massimiliano Esposito.
\newblock Nonequilibrium thermodynamics of chemical reaction networks: Wisdom
  from stochastic thermodynamics.
\newblock {\em Physical Review X}, 6(4):041064, Dec 2016.

\bibitem[Ren18]{Renger_2018}
D.~R.~M. Renger.
\newblock Flux large deviations of independent and reacting particle systems,
  with implications for macroscopic fluctuation theory.
\newblock 172:1291–1326, Sep 2018.

\bibitem[ROR{\etalchar{+}}05]{roma2005optimal}
David~Marin Roma, Ruadhan~A O’Flanagan, Andrei~E Ruckenstein, Anirvan~M
  Sengupta, and Ranjan Mukhopadhyay.
\newblock Optimal path to epigenetic switching.
\newblock {\em Physical Review E}, 71(1):011902, 2005.

\bibitem[Rue03]{Ruelle_2003}
D.~P. Ruelle.
\newblock Extending the definition of entropy to nonequilibrium steady states.
\newblock {\em Proceedings of the National Academy of Sciences},
  100(6):3054–3058, Mar 2003.

\bibitem[Sch72]{Schlogl_1972}
F.~Schlogl.
\newblock Chemical reaction models for non-equilibrium phase transitions.
\newblock {\em Zeitschrift for Physik}, 253(2):147–161, Apr 1972.

\bibitem[Son01]{sontag2001structure}
Eduardo~D Sontag.
\newblock Structure and stability of certain chemical networks and applications
  to the kinetic proofreading model of t-cell receptor signal transduction.
\newblock {\em IEEE transactions on automatic control}, 46(7):1028--1047, 2001.

\bibitem[SS04]{Serfaty04}
Etienne Sandier and Sylvia Serfaty.
\newblock Gamma-convergence of gradient flows with applications to
  ginzburg-landau.
\newblock {\em Communications on Pure and Applied Mathematics},
  57(12):1627--1672, 2004.

\bibitem[SW95]{SWbook95}
Adam Shwartz and Alan Weiss.
\newblock {\em Large deviations for performance analysis: queues, communication
  and computing}, volume~5.
\newblock CRC Press, 1995.

\bibitem[Tra21]{Tran21}
Hung~Vinh Tran.
\newblock {\em Hamilton-Jacobi equations: theory and applications}, volume 213.
\newblock American Mathematical Soc., 2021.

\bibitem[Var66]{Varadhan_1966}
S.~R.~S. Varadhan.
\newblock Asymptotic probabilities and differential equations.
\newblock {\em Communications on Pure and Applied Mathematics},
  19(3):261–286, Aug 1966.

\bibitem[VK07]{van1992stochastic}
Nicolaas~Godfried Van~Kampen.
\newblock {\em Stochastic processes in physics and chemistry}.
\newblock Elsevier, third edition edition, 2007.

\bibitem[Whi86]{Whittle}
Peter Whittle.
\newblock {\em Systems in Stochastic Equilibrium}.
\newblock John Wiley \& Sons, Inc., USA, 1986.

\bibitem[YQ20]{Yang_Qian_2020}
Ying-Jen Yang and Hong Qian.
\newblock Unified formalism for entropy production and fluctuation relations.
\newblock {\em Physical Review E}, 101(2):022129, Feb 2020.

\end{thebibliography}

\end{document}